\newcommand{\blind}{1}
\theoremstyle{plain}
\newtheorem{theorem}{Theorem}
\newtheorem{lemma}{Lemma}
\theoremstyle{definition}
\newtheorem{assumption}{Condition}
\newtheorem{definition}{Definition}
\newtheorem{remark}{Remark}
\newtheorem{example}{Example}
\newcommand{\Set}{\tau}%{\mathcal{S}}
\newcommand{\ms}{\mathcal{M}}
\newcommand{\km}{\mathrm{K}}
\newcommand{\E}{\mathbb{E}}
\newcommand{\pr}{\mathbb{P}}
\newcommand{\true}{\mathrm{true}}
\newcommand{\risk}{\mathscr{R}}
\newcommand{\Eff}{\mathrm{Eff}}
\newcommand{\mA}{\mathcal{A}}
\newcommand{\mB}{\mathcal{B}}
\newcommand{\npull}{\mathrm{N}_{\rm check}}
\newcommand{\select}{\psi}
\begin{document}
\def\spacingset#1{\renewcommand{\baselinestretch}%
{#1}\small\normalsize} \spacingset{1}

\if1\blind
{
  \title{\bf A reinforced learning approach to optimal design under model uncertainty}
 %   \title{\bf Optimal design for the true model in the presence of multiple candidates}
  \author{Mingyao Ai$^1$, Holger Dette$^2$, Zhengfu Liu$^3$, and Jun Yu$^3$ \thanks{All the authors are equally contributed to this work. Authors listed alphabetically by their surname. The corresponding author is Jun Yu (yujunbeta@bit.edu.cn)} \\
  $1:$ LMAM, School of Mathematical Sciences and Center for Statistical Science,\\ Peking University\\
$2:$ Ruhr-Universit\"{a}t Bochum, Fakult\"{a}t f\"{u}r Mathematik\\
    $3:$ School of Mathematics and Statistics, Beijing Institute of Technology }
  \maketitle
} \fi

\if0\blind
{
  \bigskip
  \bigskip
  \bigskip
  \begin{center}
  {\LARGE\bf A reinforced learning approach to optimal design under model uncertainty}
\end{center}
  \medskip
} \fi

\bigskip

\bigskip
\begin{abstract}

Optimal designs are usually model-dependent and likely to be sub-optimal if the postulated model is not correctly specified. In practice, it is common that a researcher has a list of candidate models at hand and a design has to be found  that is efficient for selecting the true model among the competing  candidates  and is also efficient (optimal, if possible) for estimating 
the parameters of the true model.
In this article, we use a reinforced learning approach to address this problem. We develop  a  sequential algorithm, 
which generates a sequence of designs  which
have asymptotically, as the number of stages 
increases, the same efficiency for estimating the parameters in the true model as  an optimal design if the  true model would have correctly been specified in advance. 
A lower bound is established to quantify the relative efficiency between such a design and an optimal design for the true model in finite stages. 
Moreover, the resulting designs are also efficient for discriminating between the true model and other rival models
from the candidate list.    Some connections with other state-of-the-art algorithms for model discrimination and parameter estimation are discussed and the methodology is illustrated by a small simulation study.

\end{abstract}
\noindent%
{\it Keywords:}  Model discrimination; Optimal design; Reinforcement learning; Sequential design; Thompson sampling. 
\vfill

\newpage
\spacingset{1.4}\par
\section{Introduction}\label{sec:intro}
  \def\theequation{1.\arabic{equation}}	
	\setcounter{equation}{0}

To enhance the statistical efficiency of experiments with limited budgets, optimal design is one of the most commonly used concepts in statistics. Meanwhile, there exists an enormous amount of  literature on constructing optimal designs  \citep[see, for example,][and the references therein]{pukelsheim2006optimal,atkinson2007sas, Luc2013design}. 
Most of the existing approaches are model dependent, which means that the statistical relationship between control factors and possible responses is assumed to be known before planning an experiment. 
Therefore, one can leverage model information to find optimal designs.

%On the other hand, 
In  many applications, a model cannot be specified before the experiment and a  common approach to address this situation in the construction of designs is  to assume that the  true model 
is contained  in a  class of candidate models. In this case,  a good design should be efficient for  the identification of the true model and for the estimation of the corresponding parameters. However, it  is well known that an  optimal design for model discrimination  often has a relatively poor performance for parameter estimation \citep[see, for example,][]{ATKINSON2008DT}. On the other hand, an optimal design for parameter estimation under a given parametric model usually does not provide enough design points to check its goodness-of-fit, for example by comparing it with a wider candidate model. 

Because of the importance to incorporate model uncertainty  in the design of experiments, several authors have worked on the problem of  constructing efficient designs for
  competing  candidate models and meanwhile there exists a vast amount of  literature on this topic. A common approach is to  conduct a two-stage (or multi-stage) design where the first stage is tailored for model discrimination, and 
in the later stages, an optimal design is constructed for the selected model \citep[see, for example,][]{Hill1968joint,montyeh1998}. 
Other authors propose  designs to optimize   compound or constraint optimality criteria that incorporate  all candidate models. 
Typical examples are the maximization average  or the minimum of  design efficiencies for all candidate models
 \citep[see][among many others] {Elisabeth1974Experimental,Dette1990A,wood2006designs}
 or the  maximization of a criterion for one particular model under the constraint that the efficiencies in all other candidate models exceed certain thresholds 
 \citep[see][among others]{cookwong1994,biedethof2009}.
Alternatively, one may embed the model of interest in a more general model and then find 
either an optimal design for estimating  all parameters in the enlarged model or an optimal 
design for estimating the parameters in the enlarged model  which are not contained in the original model
\citep[see][]{stigler1971,songwong1999}. Closely related to this approach are $T$-optimal designs, which are particularly tailored 
to model discrimination between not necessarily nested models  \citep{ATKINSON1975the,Lopez2007an,dette2009Optimal}. Moreover, some model-free designs, such as uniform designs, are usually served as another possible solution. In particular, 
\cite{Hickernell2002Uniform} demonstrate that uniform designs yield both reasonable efficiencies and robustness
and   \cite{Moon2012two} and \cite{joseph2015} point out that uniform designs with good projection properties are desirable for factor screening.
Finally, one may combine optimality criteria for model discrimination and parameter estimation by a compound or constrained criterion to find 
a good design for both  purposes
\citep[see][]{dette2000constrained,ATKINSON2008DT,TOMMASI2009optimal,Lopez2007an,May2013MODELSA}.
Another possibility is  to hybrid two types of designs \cite[see, for example][]{WATERHOUSE2008design}.

Despite the great success achieved by these methods,  they 
do not yield to an optimal solution, namely a  design, which approximates the optimal design for parameter estimation in the true (but before the experiment unknown) model under investigation. 
In particular, selection uncertainty unavoidably occurs in multi-stage approaches and the resulting designs
may be far away from the optimal one. 
Increasing budgets on model discrimination will mitigate selection inaccuracy, but the resultant designs may still have poor performance 
since discrimination designs are usually not efficient  for parameter estimation.
 Except for some specific examples, 
one can not expect that a robust design is also an optimal design for parameter estimation under the true model.
Optimal designs for  a compound or constraint criterion addressing  model discrimination and parameter estimation also suffer from a similar problem.

{\bf Our contribution: }
In this article, we use a reinforced learning approach to construct a sequence of sequential designs, which approximates the optimal design  for the true (but unknown) model in the presence of a list of competing candidate models.  
Our basic idea is to select in each stage  an optimal design from the list of  optimal designs for the candidate models  by  Thompson sampling,  which has been proved to be optimal in the reinforcement learning framework \citep[see, for example,][]{Auer2002finite,Agrawal2012anlysis}. The 
regret required for this approach is defined by design efficiencies, and  the posterior distribution is updated by a model selection step 
in each stage. As our method achieves a minimal regret in design efficiency, we obtain an  asymptotically optimal design (for the true model) combining all designs from the different  stages.    
It is proved that the sequence of  efficiencies (in the true model) of the  resulting designs converge to one, meaning  that the designs approximate  the optimal design in the true model. In particular, we derive explicit 
lower bounds for the efficiency of the sequential design after a finite number of 
stages.
%Moreover, by  selecting the model which has been chosen most frequently as the best model,  we obtain a  strategy, that  selects the true model with a probability converging to one.

By the  reinforced learning approach, we implement 
a design strategy, which  addresses  the different statistical objectives  at different stages of the experiment and 
can  be regarded as an efficient adaptive  hybrid design.
% \HD{{\bf do we really combine two type of designs - we actually chose only among the optimal designs for the different models. I would not put this sentnece here }
% The proposed method combines  the two types of designs 
% (constructed for estimation and discrimination) which play a different role
% in  selection  and  evaluation steps, and dynamically change the weights between them.
% }
% It, therefore, differs from the meanwhile  classic paradigm  that a single design has to address both estimation and discrimination from the beginning. 
The  works most similar to ours are  \cite{Biswas2002An}, \cite{TOMMASI2009optimal}
 and \cite{May2013MODELSA}, who propose sequential algorithms in model discrimination and parameter estimation for nested linear and non-linear models.
Here the key idea is to sequentially adjust the weights in a compound (or hybrid) criterion that incorporates the estimation efficiency in the  different candidate models. However, these  methods only take the 
design in the final stage into account and 
rely heavily on  the assumption of nested 
 models. Moreover, both the sequential  designs in \cite{Biswas2002An} and  \cite{May2013MODELSA} require that all competing models are estimable using the data from the  current stage  or all historical data. As a result,  the experimenter may spend more costs in model discrimination than  is really necessary.  The reinforced learning approach avoids these problems and provides an interesting alternative to the commonly used strategies for designing experiments under model uncertainty.

% \HDB{{\bf I propose to delete this paragraph}
% The article is organized as follows.
% Section~\ref{sec:preliminaries} formulates the problem.
% Section~\ref{sec:optimal_design} proposes an algorithm for searching for an optimal design for the true model when multiple postulate models are at hand. 
% Section~\ref{sec:thm} presents the theoretical analyses of the proposed algorithm.
% Section~\ref{sec:relation} discusses the connections with other state-of-the-art procedures in model discrimination and parameter estimation.
% Section~\ref{sec:sim} illustrates our methodology through several examples.
% Section~\ref{sec:conclusion} concludes this paper.
% All proofs are postponed to the Appendix. 
% }

\section{Preliminaries}\label{sec:preliminaries}

  \def\theequation{2.\arabic{equation}}	
	\setcounter{equation}{0}
 
\subsection{Problem setups}

Consider an experiment, where for a given predictor, say $\bm x$,  a response $y$ can be observed. For the statistical analysis, it is usually assumed that $y$ is a realization of a random variable $Y$ with a conditional distribution function
$F(y|\bm x)$. Let ${\cal X}$ denote  a compact subset of an Euclidean space, then we assume 
that we can take observations at each $\bm x \in {\cal X}$, where observations at different 
experimental conditions are assumed to be independent. 
As pointed out in the introduction, a large part of the literature on optimal design assumes a parametric model, say $F(y|\bm x;\bm\beta )$  for the distribution of $Y$.
Here  $F$ is a known conditional distribution 
and  $\bm\beta$ is the parameter of interest, which has to be estimated from the data.  

In  many applications, it is difficult to fix a concrete model
before any experiments have been carried out. However, often
the experimenter has several candidate models in mind which could fit the data well.
To address this problem in the construction of optimal designs, we assume  that 
the ``true'' model is an element of a set of  $\km$ candidate models denoted by $\ms=\{M_1,  \ldots, M_{\km} \}$,
where the distribution of the $j$th model $M_j$ is given by 
$F_j(y|\bm x;\bm\beta_j)$ and 
$\bm\beta_j$ denotes the 
$p_j$-dimensional parameter of interest  ($j=1, \ldots  , \km$).
We emphasize that  the design space for the predictor $\bm x$ may be different for different models although this is not reflected in our notation.
In this work, we aim to find  designs that 
have the highest estimation efficiency under the true model.

We begin with introducing some basic concepts and notations 
of optimal design theory  for a given  model, say  $M_j \in \mathcal{M} $. 
We define an 
approximate design as a probability measure 
with finite support, that is 
$\xi=\{(\bm x_i,\omega_i):i=1,\ldots,m\}$,
where $\bm x_1, \ldots  , \bm x_m  \in {\cal X} $, 
$\omega_1   , \ldots , \omega_m >0$  and $\sum_{i=1}^m\omega_i=1$.  If $n$ independent observations can be taken to estimate $\bm  \beta_j$, the quantities $n \omega_i$ are rounded to integers, say $n_i$, 
such that $\sum_{i=1}^m n_i = n$,
and $n_i $ observations are taken at each $\bm x_i$ ($i=1, \ldots , m$). In this case, under standard assumptions, the asymptotic covariance 
matrix of the maximum likelihood estimator 
is given by  $n^{-1} I_j^{-1}(\xi  )$, where
$I_j(\xi)=\sum_{i=1}^m\omega_i I_j(\bm x_i, \bm \beta_j )$
is the  information matrix of the design 
$\xi$, $I_j(\bm x_i, \bm \beta_j )$ denotes the Fisher information matrix at a single design point $\bm x_i$ (in model $M_j$), and we do not reflect a possible dependence of the information matrix $I_j(\xi)$ on the parameter $\bm \beta_j$ in our notation. {The model $M_j$ 
(more precisely the parameter  $\bm \beta_j$) is called estimable  by the design $\xi$ if 
the matrix $I_j(\xi)$ is non-singular.}
An optimal design for the model $M_j$ maximizes a real-valued function, say $\phi_j$ of the 
information matrix $I_j(\xi)$
in the class of all designs. 
Here $\phi_j$ is an information function in the sense of \cite{pukelsheim2006optimal}, that is a positively homogeneous, concave, non-negative, non-constant, and upper semi-continuous function on the space of non-negative definite matrices.
Typical examples include 
the famous $D$-optimality criterion $\phi_j (\xi) = \{\det (I_j(\xi))\}^{1/p_j}$  and the
$A$-optimality criterion
$\phi_j (\xi) =  \{{\rm tr}( {p_j}^{-1}  I_j^{-1}(\xi) )\}^{-1}$,  where 
$p_j$ denotes the dimension of the parameter $\bm \beta_j$. Throughout this paper we denote a design maximizing $\phi_j$ as $\phi_j$-optimal design and 
 assume that  $\phi_j (\xi) =0 $ if the information matrix $I_j(\xi) $
is singular. This property is 
satisfied for almost all optimality criteria considered in practice.

In many cases, in particular for non-linear models,  the information matrix also depends on the unknown parameter.
 Following   \cite{chernoff1953locally}, 
we use  the concept of 
``locally optimal'' design  and
assume in this case that  an initial guess
for the unknown parameter $\bm\beta_j$ is available. 
Such an approach is reasonable if the 
locally optimal designs are not too sensitive with respect to  the choice of the unknown parameter. A typical  example
appears in the modeling of dose-response relationships 
in phase II clinical trials by non-linear regression models, where already some information from phase I is available \citep[see, for example,][]{bretz2006}.  
In this context, \cite{DetteBretz2010}
investigated the robustness of optimal designs for estimating the minimum effective dose
from a  dose-response curve 
with respect to the
specification of the unknown parameter. Their results suggest that locally optimal designs are moderately robust with respect to a misspecification of the model parameters but highly sensitive with respect to a misspecification of the regression function.

To address this problem 
in the design of experiments under model uncertainty, 
we denote  the index  corresponding to the true model, say $M_\true \in \ms = \{M_1,  \ldots, M_{\km} \}$,  by  the subscript  ``$\true$'' in the following discussion. 
Of course,  an optimal design for the true model in the  set  $\ms$ conceptually refers to a design $\xi^*$ maximizing 
$ \phi_{\true}(\xi) = \phi_{\true}(I_\true (\xi)) $. However, a little care is necessary since the optimal design for the true model may  not be unique in general, and an optimal design under an incorrect model may also be optimal for the true model
as indicated in the following example.

\begin{example} ~~

\noindent
% (a)  
Assume that the class $\ms$ consists of three
trigonometric regression models
$$
Y=\beta_0+\sum_{l=1}^j\left(\beta_{2l-1}\cos(lt)+\beta_{2l}\sin(lt)\right)+\varepsilon~,~~
$$
of degree  $j=1,2,3$ and that the design space is given by the interval $ [0,2\pi)$.
It is proved in Section 9.16 of \cite{pukelsheim2006optimal} that every 
uniform design  with more than $2j+1$ support points  is   $\Phi_q$-optimal for the trigonometric regression model of degree $j$, %\HD{{\bf Jun: please check the definition in the book of \cite{pukelsheim2006optimal}!} 
which means that the design  maximizes the criterion 
$\Phi_q(\xi)=\{{p_j}^{-1}{\rm tr}(I_j^{q} (\xi) )\}^{1/q}$, where $q \in [-\infty,1)$.  Note that the cases $q=0$, $q=-1$, and $q=-\infty$ correspond to widely used $D$-, $A$- and $E$-optimality criterion, respectively. Consequently, a uniform design 
%say $\xi_{\mathcal{U},7}$,   
with seven equispaced support points  is optimal for all three models. 
However, when $M_1$ or $M_2$ is  the  true model, one can also construct a design with less than seven supports to achieve the same efficiency. In this case, an experimenter would prefer this design  since it further reduces the cost of altering the experimental settings.  
\end{example}

In the following, we formally define an optimal design for the true model among the competing candidate models from the set $\ms$.

 \begin{definition}\label{def1}
 	A design $\xi^*$  is said to be $\ms$-optimal, if and only if 
 	$\xi$ maximizes the functional 
  \begin{equation} \label{eq1}
 		 \phi_{\true}(\xi) \gamma_{\xi},
 \end{equation}
    where $\gamma_{\xi}$  is an indicator, which is  one if and  only if the number of support points of $\xi $  is 
    minimal among all  $\phi_\true$-optimal  
   designs for the true model.
 \end{definition}

Note that  the true  model can not be estimated by the
design under consideration if the corresponding  Fisher information matrix $I_\true (\xi)$ is singular. In this case, 
the criterion value is  zero (because the criteria vanish for 
singular matrices, by assumption). Therefore,  maximizing  the criterion in \eqref{eq1}  will always result in a design that can be used to estimate all parameters in the true model.
On the other hand, a design constructed for a more complicated  model is not necessarily  a good solution, since a more complex model usually involves more support points. In this scenario, statistical efficiency is not the only standard, the economic costs  caused by taking observations at more different experimental conditions  
 should also  be taken into account.
In general $\phi_{\true}(\xi) \gamma_{\xi}$ will not be maximal  unless  the design $\xi$ is $\phi_{\true}$-optimal under the true model
$M_\true$ with a  minimal number of support
points. 
%Therefore, such a strategy is also likely to be sub-optimal.

It is worth mentioning that constructing an optimal design with a minimal number of  support points is not an easy task. In practice, this condition can be relaxed by the optimal design with a minimal number of support points among optimal designs for the candidate models. 
For the sake of a simple notation, all  optimal designs in this paper refer to the optimal designs for the corresponding models with  a minimal number of  support points.

 \subsection{{Multi-stage} designs} 
 
 One can not expect to obtain  an $\ms$-optimal design when the experiment can only be carried out in one stage without enough prior knowledge.
In the following, we introduce  multi-stage (approximate) designs which is one of the key ingredients in finding an $\ms$-optimal design.
 
In the multi-stage design context,   $n_t$  observations  are taken according to  an approximate design  $\xi_{(t)}$ at each   stage $t$ for $t\in\{1, \ldots ,T\}$, where $T$ denotes the  total number of possible stages.
 Let $n:= \sum_{t=1}^{T} {n_t}$
 be the total sample size.
We are interested in  finding an $\ms$-optimal multi-stage design, that is    a sequence of designs $\xi_{(1)},\ldots, \xi_{(T)}$ such that the convex combination  $ \xi_{\ms ,T} = 
\sum_{t=1}^{T}(n_t/n)\xi_{(t)}$
is in some sense ``close''  to an 
$\ms$-optimal design.
For this purpose  we define a corresponding
 gain of conducting $\xi_{(t)}$ at   stage $t$
 by  
\begin{equation}\label{eq:reward}
	r(\xi_{(t)})= \phi_{\true}(\xi_{(t)}) \gamma_{\xi_{(t)}},
\end{equation}
where $\gamma_{\xi_{(t)}}$ is defined 
in Definition~\ref{def1}. 
Our approach is then  based on 
%Therefore, finding a multi-stage  $\ms$-optimal design is equivalent to 
finding a sequence of designs $\xi_{(1)}, \ldots, \xi_{(T)}$ that achieves the maximum gain at each stage $t$.
One possible solution to  reach this goal  is to select a design from the locally $\phi_1, \ldots, \phi_K$-optimal designs 
$\xi_1^*,\ldots,\xi_{\km}^*$ for the models $M_1, \ldots , M_{\km}$, respectively.
Clearly, if one can select the design $\xi_{\true}^*$ infinitely often, 
such that  ${\# \{  t   | \xi_{(t)} \not =  \xi_{\true}^* \} }/T$
converges to $0$ as $T \to \infty$ {and the   sample sizes 
$n_t$ at each stage are of the same order}, then an asymptotically  $\ms$-optimal design can be constructed by aggregating these designs. By this approach, we simplify the problem to a discrete choice problem.
 Finding a maximum gain design  in each stage could then serve as a proxy for finding an $\ms$-optimal design.

\section{Sequential designs by reinforced learning}\label{sec:optimal_design}
  \def\theequation{3.\arabic{equation}}	
	\setcounter{equation}{0}

To achieve a maximum gain in each stage, the researchers 
have  to appropriately balance two objectives:   (a) exploiting the most promising model in the current stage and finding an optimal design to further improve its parameter estimation; 
(b) exploring an alternative model that may turn out to be the true model in the future and performing an optimal design for it.

Borrowing ideas from reinforcement learning, we will to use  multi-armed bandits' strategies to achieve the  balance between exploitation and exploration.
To be precise, one can treat the $\km$ candidate designs as arms. One may conduct a batch of experiments sequentially trying out various arms (or candidate designs in our setting) and collect gains to prioritize a design that is good for the true model.
Identifying the arm with the highest expected gain and pulling  this arm (i.e., conducting experiments according to the corresponding optimal design) infinitely often could then serve as a natural solution for constructing a design which is approximating an $\ms$-optimal design.

As pointed out in \cite{agrawal2013further},  Thompson's sampling strategy  \citep{Thompson1933ONTL} is a Bayesian heuristic reinforcement learning algorithm that can achieve a nearly optimal solution
for the dilemma between exploitation and exploration. %multi-armed bandit problem. 
In the following, we will link the Thompson sampling strategy 
to the multi-stage $\ms$-optimal design. 
Similar to traditional Thompson sampling, the proposed algorithm for {searching a} multi-stage $\ms$-optimal design alternates among the selection, evaluation, and updating.

We start with some necessary notations.
Denote the observations at the stage $t$
according to  a design $\xi_{({t})}$ by
\begin{equation}\label{eq:Dt}
    {\cal D}_{t}:= \{(\bm x_{n_0 + \ldots +  n_{t-1}+1}, Y_{n_0 + \ldots +  n_{t-1}+1}),\ldots, (\bm x_{n_0  + \ldots + n_{t}},Y_{n_0  + \ldots + n_{t}})\}
\end{equation}
and define 
 $\zeta_j(t)\in\{0,1\}$ as an indicator whether the model $M_j$ is chosen as the true model  using the data ${\cal D}_{t}$. It is worth mentioning that $\zeta_j(t)$ is a random variable.
Through the lens of
Bayesian model checking and selection techniques, we can further define the posterior probability 
\begin{equation}
\label{det11} 
\theta_{j}(\xi_{({t})}):= 
\pr(\zeta_{j}(t)=1|{\cal D}_{t})
\end{equation}
that $M_j$ is the true model conditional on the data ${\cal D}_t$. For mathematical rigor, we only define $\theta_{j}(\xi_{(t)})$ for the models which are  estimable.
% under $\xi_{\Set_t}$.
As in  traditional Thompson Sampling, the $t$th  stage starts off by sampling  $\eta_j(t)\sim\mathrm{Beta}(a_j(t),b_j(t))$  from a posterior distribution that serves as a surrogate for 
$\theta_{j}(\xi_{(t)})$. To be precise,  $a_j(t)-1$ and $b_j(t)-1$
are the numbers of times that model $M_j$ is treated as the true model (success) and the improper model (failure) in the first $t-1$ stages, respectively ($j=1, \ldots , \km$).  
Then,  following the Thompson sampling policy we  select the index $\Set_t \in \{ 1, \ldots , \km  \} $  corresponding to the highest value among $\eta_1 (t) , \ldots, \eta_\km (t)$  at  stage  $t$ and  denote  by   $\xi_{\Set_t}^*$ the corresponding optimal design. For the first stage, we simply put $a_j(1)=b_j(1)=1$ for all candidate models opportunities to be selected as the true model at the beginning.

Then in the evaluation step, the user will 
evaluate all models  in the candidate pool and score them 
using a sample of observations from  the design 
 $\xi_{\Set_t}^*$. However, some care is necessary here, as there might exist models which are not estimable by the design  $\xi_{\Set_t}^*$.
To address this problem we conduct some additional experiments  at stage $t$ according to a uniform design. 
More precisely, in each stage $t$, we take $n_t$ observations according to a hybrid design $\rho_t\xi_{\Set_{t}}^*+(1-\rho_t)\xi_{\rm unif}$, where $\xi_{\rm unif}$ denotes a uniform design and $\rho_t\in(0,1)$ denotes the proportion of  observations taken according to  the optimal design at stage $t$.
We  use the uniform design here since it has desirable maximin properties with respect to goodness-of-fit testing \citep{Wiens2009RobustDD}. Other discrimination designs can also be applied here as well.
We will  allow the proportion $\rho_t$ to change with $t$, since as the observations began to accumulate we have more confidence in the optimal design we selected. A possible specification of $\rho_t$ is  $N_{\Set_{t}}(t)/(N_{\Set_{t}}(t)+1)$, where $N_j(t)$ denotes  the number of times that design $\xi_j^*$ was  chosen  until stage $t$.
Because all models are estimable by this design  we can use a model selection criterion to select the ``best''  model, say $M_{\select_t}$, among the candidates. In the following we will work with the  Bayesian information criterion  \citep[BIC, see][]{schwarz1978Estimating}, but other 
model selection techniques including goodness-of-fit tests  could be considered as well.
Finally, we assign a score $\zeta_{\select_t}(t)=1$ for the selected   model and the score $\zeta_{j}{(t)}=0$ for $j\neq {\select_t}$.

The algorithm concludes with an  {updating step} for the beta posterior distribution, setting $a_j(t+1)=a_j(t)+\zeta_j(t)$ and $b_j(t+1)=b_j(t)+(1-\zeta_j(t))$. The details are 
summarized in  Algorithm  \ref{alg:vanillathompson1}.
\medskip

\begin{algorithm}[H]
{\SetAlgoLined
\spacingset{1.2}
\small
%\LinesNumbered
\LinesNumbered
\medskip 

{\bf Initialize} {$n_0:=0$ and} 
$a_j(1)=1$ and $b_j(1)=1$ for each arm $j=1,\ldots,\km$.
\medskip

\For{$t=1,\ldots,T-1$}{
	
\textbf{(1) Selection Step}
	\begin{itemize}
	    \item Generate random variables $\eta_j(t)\sim\mathrm{Beta}(a_j(t),b_j(t))$ for $j=1,\ldots,\km$.
     \item Choose the one with the largest value among $\{\eta_j(t)\}_{j=1}^\km$ and denote its index by $\Set_t$.
     \item {Take $n_t$ observations 
${\cal D}_t$ defined in \eqref{eq:Dt}
according to  $\xi_{(t)}^{\rm hyb}:= \rho_t\xi^*_{\Set_t}+(1-\rho_t)\xi_{\rm unif}$,\\ where $\xi^*_{\Set_t}$ is the (locally) optimal design 
   for  model $M_{\Set_t}$}, and $\xi_{\rm unif}$ is a uniform design.
 
	\end{itemize}

\textbf{(2) Evaluation Step }

    \begin{itemize}
        \item Use the data ${\cal D}_t$ and  the BIC criterion to  select 
        a model in  $\mathcal{M}$, and denote the corresponding index by 
        $\select_t$. 
        \item $\zeta_j(t) = 1$ (true model) if   $j=\select_t$  and $\zeta_j(t) = 0 $ if   $j \not =\select_t$. 
    \end{itemize}

\textbf{(3)  Updating Step}
 \begin{itemize}
	\item	Set $a_j(t+1)=a_j(t)+\zeta_j(t)$ and $b_j(t+1)=b_j(t)+(1-\zeta_j(t))$. 
  \end{itemize}
}
\medskip

{\bf Output:} The  design ($n:= \sum_{t=1}^{T} {n_t}$)
\begin{equation}
  \label{result} 
  \textstyle{\xi_{\ms ,T} : =\sum\limits _{t=1}^T \frac{n_t}{n}\xi_{(t)}^{\rm hyb}= \sum\limits _{t=1}^T \frac{n_t}{n}\left\{\rho_t\xi^*_{\Set_t}+(1-\rho_t){\xi_{\rm unif} 
} \right\}.}
\end{equation}
}

\caption{\small Multi-stage $\ms$-Optimal Design Algorithm} 
\label{alg:vanillathompson1}
\end{algorithm}

\begin{remark}

Constructing an optimal design in the selection step is not an easy task
and in many cases these designs have to be found numerically. Fortunately, 
many efficient numerical algorithms  have been developed  for generating optimal designs in  different models  
with respect to different criteria \citep[see, for example,][for some recent references]{yu2010Monotonic,yang2013algorithm,R_Software_OptimalDesign}.
Thus, we do not specify a detailed algorithm  here for  the selection step.
After obtaining an approximate optimal design, users may apply a rounding procedure \citep{pukelsheim2006optimal} to decide about the number of  replications at each design point. Alternatively, the exact optimal design algorithm based on mixed-integer second-order cone programming  proposed by  \cite{Guillaume2015Computing} can also be applied.
\end{remark}

\begin{remark}\label{remark:testset}
% \HD{{\bf Jun, please check, if this is what you have in mind!}  
    To  reduce  the costs, we can first conduct $n_{\rm test}$ experiments
    at the beginning.
    Then we use these $n_{\rm test}$ 
    observations together with $n_t - n_{\rm test} $ 
    observations according to the optimal design $\xi_{\Set_{t}}^*$ at each stage $t$ 
    to select the best model.
    Note  that conditional on these $n_{\rm test}$ experimental units, the selection results are still independent thus the effects of choosing improper designs will also not be propagated over time.%}
\end{remark}

\begin{remark}\label{remark:testset1}
In some cases (for example, if the sample size $n_t$ is too small) not all models in the class $\ms $  may be  estimable using observations  according to the 
    hybrid design $ \xi_{(t)}^{\rm hyb}:= \rho_t\xi_{\Set_{t}}^*+(1-\rho_t)\xi_{\rm unif}$ (or by the design in Remark~\ref{remark:testset}). 
    In such a scenario, we propose to modify the operation at stage $t$ as follows. 
We skip the updating step (3) for the models which are not estimable, say $M_j$, and keep $a_j(t+1)=a_j(t), b_j(t+1)=b_j(t)$. % to keep their uncertainty in  the corresponding $\eta_j(t)$ in step (1). 
Consequently, we still have chance to explore such models at the stage $t+1$ due to the relatively large variance of  the $\mathrm{Beta}(a_j(t+1),b_j(t+1))$ distribution.
 For all   estimable models, we adopt a goodness-of-fit test \citep[see, for example][]{dette1999} and  assign $\zeta_j(t)=0$  to all  models which are rejected. Among the models  that cannot be reject by the goodness-of-fit test, we 
 select a model by BIC  and define  $\zeta_{\select_t} (t)=1$ for the selected model and $\zeta_j(t)=0$
  for the rest. Note that  this procedure will assign 
 $\zeta_{\select_t} (t)=0$ to all models of $\ms$ if the goodness-of-fit test rejects all models.
\end{remark}

\begin{remark}
    Compared with searching compound or constrained optimal designs
 our method offers a computational advantage   since we do not need to search for  designs for various values of  weights or constraints in the criteria.
    This is a considerable benefit when the multi-objective design is hard to obtain.
\end{remark}

\begin{remark}
{Unlike most sequential designs such as \cite{Biswas2002An}, which use  all  observations from previous stages to conduct the model evaluation, Algorithm \ref{alg:vanillathompson1} only uses the data $\mathcal{D}_{t}$
from  stage $t$. As a consequence, the algorithm has some ability for exploration and the effects of choosing   improper designs will not be propagated over time.} 

\end{remark}

\begin{remark}
	From the evaluation step and updating step in Algorithm~\ref{alg:vanillathompson1}, one can clearly see the difference to the single-armed bandit problem.
	In the evaluation step, we propose to evaluate as many models as possible to make full use of the information from an experiment.
	Thus, sometimes it can be more efficient than the single-armed bandit in the sense of data exploration.
	More importantly, such an evaluation method can help the researcher  to  efficiently identify the true model from the candidates that are well-fitted (or overfitted) with the observed data.

\end{remark}

\section{Theoretical results}\label{sec:thm}
  \def\theequation{4.\arabic{equation}}	
	\setcounter{equation}{0}
 
In this section, we investigate some theoretical properties of the proposed algorithm. 
In particular, two fundamental questions regarding  Algorithm~\ref{alg:vanillathompson1} will be answered.
First, can the true model be correctly identified by the proposed design?
Second, how well approximates the design  $ \xi_{\ms ,T} $ obtained by Algorithm~\ref{alg:vanillathompson1} the optimal design $\xi^*_{\true}$ in terms of design efficiency?  For the sake of a simple notation, we assume throughout this section that the number of
runs in all stages is the same, i.e. $n_1=\ldots=n_T.$

\subsection{Asymptotic properties}
\label{sec41}

By Definition~\ref{def1}, the resulting design $ \xi_{\ms ,T} $  in \eqref{result} approximates  the  $\ms$-optimal design $\xi_{\ms}$  if one chooses $\xi^*_{\true}$ infinitely often in the selection step of Algorithm~\ref{alg:vanillathompson1} {such  that  $\#\{t|\xi_{\Set_t} \not = \xi^*_{\true}\}/T\to  0$.}
If this happens, the answers to the above two questions are straightforward.
In fact, we will show  below that the proposed method  selects a wrong design  only $O(\log(T))$ times - see   Lemma  \ref{lem:times} in the supplement, for more details. For notational simplicity, we assume $\rho_t=N_{\Set_{t}}(t)/(N_{\Set_{t}}(t)+1)$ as suggested in Section~\ref{sec:optimal_design}.
 Before we present the main results  we will impose a mild condition on the  selection criterion  used in the evaluation step (which is in our case the  BIC procedure).

\begin{assumption}\label{ass:stability}
 There exists two constants $\alpha\in(0,1/2)$ and $c\in (\alpha,1-\alpha)$, such that
any design $(1-\rho)\xi_l^*+\rho\xi_{\rm unif}$ with $\rho\in
{\cal R}:= \{1/2,2/3,3/4, \ldots\}$ and $l\in\{1,\ldots,\km\}$ satisfies   
\begin{equation}
  \label{cond11}  
%\inf_{\rho \in {\cal R} }
\theta_{\true}(\rho\xi_l^*+(1-\rho)\xi_{\rm unif})>c+\alpha,
\end{equation}
 and 
\begin{equation}
  \label{cond12}  
%\sup_{\rho}
\theta_{j}(\rho\xi_l^*+(1-\rho)\xi_{\rm unif})<c-\alpha,
\end{equation} 
for any $j\neq \true$.
\end{assumption}

Condition~\ref{ass:stability} essentially requires that the decision (or the selection) is stable according to different kinds of designs.
Note that the BIC enjoys the selection consistency property \citep[see, for example][]{claeskensHjort08} and  with probability converging  to one, this method is able to
select the true model from the candidate models.
Thus one can expect that Condition~\ref{ass:stability} holds when $n_t$ is not too small. In this case   we can show that the design obtained by Algorithm \ref{alg:vanillathompson1} approximates the 
$\ms$-optimal design  since the event $\{M_{\Set_t} \not = M_\true \}$ occurs only finitely many times as $T\to\infty$. Our first result makes this statement precise.
Note that by  definition the index   $\Set_t$ is a random variable  and consequently  the resulting design
$\xi_{\ms ,T}$  in Algorithm~\ref{alg:vanillathompson1}  
and its efficiencies  are random objects as well.

 \begin{theorem}\label{thm:optimal}
 	Assume that Condition~\ref{ass:stability}
  % and \ref{ass:concave} 
  holds. Then, as $T\to\infty$, 
 	\begin{equation}
 		\phi_{\true}(\xi_{\ms,T})\to 	\phi_{\true}(\xi_{\true}^*)
 	\end{equation}
  in probability, 
 	where $\xi_{\ms,T}$ is the design obtained by Algorithm~\ref{alg:vanillathompson1}, and $\xi_{\true}^*$ is the (locally) optimal design for the true model.
 \end{theorem}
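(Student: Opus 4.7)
The strategy is to reduce the efficiency convergence to a statement about the selection frequencies produced by Thompson sampling, and then exploit linearity of the information matrix in the design measure together with continuity of $\phi_{\true}$. Under Condition~\ref{ass:stability} the posterior success probabilities $\theta_{\true}(\xi_{(t)}^{\rm hyb})$ and $\theta_j(\xi_{(t)}^{\rm hyb})$ ($j\ne \true$) are uniformly separated by the gap $2\alpha$ across all hybrid designs that can arise in the algorithm; this is precisely the non-degeneracy needed for the classical logarithmic regret bound for Thompson sampling. Invoking Lemma~\ref{lem:times} in the supplement, I would conclude
\begin{equation*}
\sum_{j\ne \true} N_j(T)=O_p(\log T),
\end{equation*}
so $N_{\true}(T)/T\to 1$ in probability as $T\to\infty$.

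With $n_1=\cdots=n_T$ one has $n_t/n=1/T$, so I would decompose
\begin{equation*}
\xi_{\ms,T}=\alpha_T\,\xi^*_{\true}+(1-\alpha_T)\,R_T,\qquad \alpha_T=\frac{1}{T}\sum_{t:\Set_t=\true}\rho_t,
\end{equation*}
where $R_T$ is a probability measure supported on $\{\xi_j^*:j\ne \true\}\cup\{\xi_{\rm unif}\}$. Because $\rho_t=N_{\true}(t)/(N_{\true}(t)+1)$ when $\Set_t=\true$, a harmonic-sum bound yields
\begin{equation*}
\frac{N_{\true}(T)}{T}-\alpha_T=\frac{1}{T}\sum_{k=1}^{N_{\true}(T)}\frac{1}{k+1}=O\!\left(\frac{\log T}{T}\right),
\end{equation*}
so $\alpha_T\to 1$ in probability. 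Since the information matrix is linear in the design measure,
\begin{equation*}
I_{\true}(\xi_{\ms,T})=\alpha_T\,I_{\true}(\xi^*_{\true})+(1-\alpha_T)\,I_{\true}(R_T),
\end{equation*}
and $\|I_{\true}(R_T)\|$ is uniformly bounded by $\sup_{\bm x\in{\cal X}}\|I_{\true}(\bm x,\bm\beta_{\true})\|<\infty$ (finite by compactness of ${\cal X}$ and the assumed regularity of $F_{\true}$). Hence $I_{\true}(\xi_{\ms,T})\to I_{\true}(\xi^*_{\true})$ in probability in operator norm. Since $\phi_{\true}$ is concave and upper semi-continuous and $I_{\true}(\xi^*_{\true})$ is nonsingular (otherwise $\phi_{\true}(\xi^*_{\true})=0$, contradicting its optimality whenever any estimable design exists), $\phi_{\true}$ is continuous at $I_{\true}(\xi^*_{\true})$, and the continuous mapping theorem delivers $\phi_{\true}(\xi_{\ms,T})\to\phi_{\true}(\xi^*_{\true})$ in probability.

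\emph{Main obstacle.} The heavy lifting lies in the first step. The indicators $\zeta_{\true}(t)$ are not i.i.d.\ Bernoulli — their success probability depends on the history through $\Set_t$ and $\rho_t$ — so the textbook Thompson sampling regret bound does not apply off the shelf. Condition~\ref{ass:stability} is exactly the uniform gap that lets one stochastically dominate $\zeta_{\true}(t)$ from below by a $\mathrm{Bernoulli}(c+\alpha)$ draw and each $\zeta_j(t)$ ($j\ne \true$) from above by a $\mathrm{Bernoulli}(c-\alpha)$ draw, reducing the analysis to a standard $\km$-armed Bernoulli bandit to which the Agrawal--Goyal logarithmic regret bound applies.
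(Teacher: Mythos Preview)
Your argument is correct, and it relies on the same central ingredient the paper uses, namely Lemma~\ref{lem:times} (the logarithmic bound on the expected number of wrong selections). However, the route from that lemma to the conclusion differs. The paper does not prove Theorem~\ref{thm:optimal} directly: it first establishes the finite-stage efficiency bound of Theorem~\ref{thm:risk22} by exploiting \emph{concavity} of $\phi_{\true}$ to obtain the pathwise inequality
\[
\Eff_{\true}(\xi_{\ms,T})\ \ge\ \frac{1}{T}\sum_{t=1}^T \rho_t\,\mathbb{I}(\mathcal{A}^c(t))\ \ge\ \frac{1}{T}\sum_{t=1}^T \mathbb{I}(\mathcal{A}^c(t))-\frac{\log T}{T},
\]
takes expectations, applies Lemma~\ref{lem:times}, and then deduces Theorem~\ref{thm:optimal} in one line by letting $T\to\infty$ (convergence in probability follows from $\Eff_{\true}\in[0,1]$ and Markov's inequality). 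Your route instead passes through the information matrix: you show $\alpha_T\to 1$ in probability, use linearity of $I_{\true}(\cdot)$ in the design, and appeal to \emph{continuity} of $\phi_{\true}$ at the nonsingular matrix $I_{\true}(\xi^*_{\true})$. Both arguments are valid; the paper's concavity device is a bit cleaner because it needs no boundedness of $I_{\true}(R_T)$ (your argument requires $\sup_{\bm x\in\mathcal X}\|I_{\true}(\bm x,\bm\beta_{\true})\|<\infty$, a mild regularity you correctly flag), and it delivers the quantitative Theorem~\ref{thm:risk22} as a byproduct. Your approach, on the other hand, makes the convergence mechanism at the design level more transparent.
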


    Note that Condition~\ref{ass:stability} is  an  assumption on the model selection procedure and Theorem~\ref{thm:optimal} holds for any  procedure  that identifies the true model with high probability. 
    The BIC in Algorithm~\ref{alg:vanillathompson1} is one option  and one  can use other  techniques  as well to score the   candidate models.

Recall the notation  of  $N_j(t)$ as   the number of stages where  $\xi_j^*$ is chosen to construct the hybrid design in the selection step of 
Algorithm~\ref{alg:vanillathompson1} until time $t$. When the algorithm stops at time $T$, we propose  to finally  choose model $M_j$ such that 
$N_j(T)$ is the maximum number among $\{N_1(T), \ldots , N_\km (T) \}$. 
Our next result yields, as a by-product, the selection consistency of this method.

 \begin{theorem}\label{thm:selection}
 %\HD{{\bf check rewriting!} 
If Condition~\ref{ass:stability} holds, then  the expected number of times that the event $\mA(t)=\{M_{\Set_t} \neq M_\true\}$ happens 
satisfies
$$
	\E \Big [ \sum_{t=1}^T \mathbb{I}\left(\mathcal{A}(t)\right) \Big ] 
  \le \km\left(2+\frac{2}{\alpha^2}\right)+\frac{8\log(T)}{\alpha^2}.
$$	
 In particular, as $T\to\infty$,  with probability approaching one, 
 $$
 N_\true (T) = \max \big \{N_1(T),\ldots,N_{\km}(T) \big \}~. 
 $$
\end{theorem}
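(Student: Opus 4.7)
The plan is to cast Algorithm~\ref{alg:vanillathompson1} as a $\km$-armed Bernoulli bandit and adapt the Thompson sampling regret analysis of \cite{Agrawal2012anlysis}. Arm $j$ corresponds to model $M_j$: pulling arm $j$ at stage $t$ means $\Set_t=j$ with the hybrid design $\xi_{(t)}^{\rm hyb}$ being executed, and the reward driving the Beta-posterior update is the Bernoulli variable $\zeta_j(t)$. The essential consequence of Condition~\ref{ass:stability} is the following \emph{uniform} one-sided bound: regardless of the realized $\Set_t$ and of the past, $\pr(\zeta_{\true}(t)=1)>c+\alpha$ while $\pr(\zeta_j(t)=1)<c-\alpha$ for every $j\ne\true$. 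Hence, once arm $\true$ has contributed $n$ updates, its cumulative number of successes stochastically dominates a $\mathrm{Binomial}(n,c+\alpha)$ variable, whereas the cumulative successes of any arm $j\ne\true$ are stochastically dominated by a $\mathrm{Binomial}(n,c-\alpha)$ variable.

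Using the Beta-Binomial identity together with Hoeffding's inequality, I would fix the threshold $c$ from Condition~\ref{ass:stability} as a separator and establish two concentration facts: (i) once arm $\true$ has been updated a constant of order $1/\alpha^2$ times, $\pr(\eta_{\true}(t)<c)$ decays geometrically in the number of subsequent updates; (ii) once a suboptimal arm $j$ has been updated on the order of $\log(T)/\alpha^2$ times, $\pr(\eta_j(t)\ge c)\le 1/T^2$. I would then decompose $\mA(t)=\{\Set_t\ne\true\}$ according to whether $\eta_{\true}(t)<c$ or $\eta_{\true}(t)\ge c$. The first piece contributes, per arm, a constant burn-in plus a geometric-tail sum $\sum_{n\ge 1}\exp(-2\alpha^2 n)\le 1/(2\alpha^2)$, producing the $\km(2+2/\alpha^2)$ summand. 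The second piece forces $\eta_j(t)\ge c$ for some suboptimal $j$, which by (ii) can happen only $O(\log(T)/\alpha^2)$ times per arm before posterior concentration kicks in, the residual tail $T\cdot 1/T^2$ being absorbed into the constants; tracing the Hoeffding constants through the union bound yields the $8\log(T)/\alpha^2$ summand.

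The main obstacle is that $(\zeta_1(t),\ldots,\zeta_{\km}(t))$ is not a product across arms: at most one coordinate equals one at each stage, and the joint law depends on which arm was pulled. I would sidestep this by invoking only the marginal one-sided bounds furnished by Condition~\ref{ass:stability}, which are uniform in $\Set_t$ and in the past history; this decouples the per-arm stochastic-dominance argument and lets the Agrawal-Goyal template transfer with the constants traced above. For the ``in particular'' statement, Markov's inequality applied to $\sum_{t=1}^T\mathbb{I}(\mA(t))=\sum_{j\ne\true}N_j(T)$, whose expectation is $O(\log T)$, gives $\pr(\sum_{t=1}^T\mathbb{I}(\mA(t))>T/2)\to 0$; therefore $N_{\true}(T)=T-\sum_{t=1}^T\mathbb{I}(\mA(t))\ge T/2$ with probability approaching one, while each $N_j(T)\le\sum_{t=1}^T\mathbb{I}(\mA(t))\le T/2$ for $j\ne\true$, yielding $N_{\true}(T)=\max\{N_1(T),\ldots,N_{\km}(T)\}$ with probability tending to one.
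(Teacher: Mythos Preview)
Your treatment of the ``in particular'' clause --- apply Markov's inequality to $\sum_{j\ne\true}N_j(T)=\sum_{t=1}^T\mathbb{I}(\mA(t))$, whose expectation is $O(\log T)$, to force $N_{\true}(T)\ge T/2$ --- is correct and is exactly what the paper does.

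For the expectation bound itself, the plan to transplant the Agrawal--Goyal template has a structural mismatch that affects the constants you claim. In a standard Bernoulli bandit only the \emph{pulled} arm's posterior is refreshed, so each suboptimal arm $j$ must itself be played on the order of $\log(T)/\alpha^2$ times before its Beta sample concentrates below $c$; the per-arm bookkeeping you describe then produces a $(\km-1)\cdot O(\log(T)/\alpha^2)$ term. But Algorithm~\ref{alg:vanillathompson1} updates \emph{all} $\km$ Beta posteriors at every stage: step~(3) applies $a_j(t{+}1)=a_j(t)+\zeta_j(t)$ and $b_j(t{+}1)=b_j(t)+(1-\zeta_j(t))$ for every $j$, because BIC scores every model. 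Hence $a_j(t)+b_j(t)-2=t-1$ for all $j$ simultaneously, the posterior mode is $\hat\theta_j(t)=(a_j(t)-1)/(t-1)$, and the burn-in is \emph{shared} across arms. This is precisely why the $\log(T)$ term in the stated bound carries no $\km$ factor. Indeed, when this all-arms-update structure is broken (the modification in Remark~\ref{remark:testset1}), the paper's Lemma~\ref{lem:times1} does produce the $(\km-1)\log(T)/\alpha^2$ term your per-arm analysis would yield; compare with Lemma~\ref{lem:times}.

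Relatedly, your attribution of the two summands to the two halves of $\{\eta_{\true}(t)<c\}\cup\{\eta_{\true}(t)\ge c\}$ is inverted: the first half concerns a single arm and cannot generate the factor $\km$, while the second (which forces $\eta_j(t)\ge c$ for some $j\ne\true$) is where the union over $\km-1$ arms enters. The paper instead decomposes $\mathbb{I}(\mA(t))$ via three events tied to the common clock $t$: $\mathcal{B}(t)=\{\hat\theta_{\true}(t)\le c+\alpha/2\text{ or }\exists j\ne\true:\hat\theta_j(t)\ge c-\alpha/2\}$ (Hoeffding over all $\km$ arms gives $\km(1+2/\alpha^2)$), $\mathcal{C}(t)=\{\text{some }\eta_j(t)\text{ deviates from }\hat\theta_j(t)\text{ by }\alpha/2\}$, and the deterministic $\mathcal{D}(t)=\{t>8\log(T)/\alpha^2+1\}$. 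On $\mathcal{D}^c$ at most $8\log(T)/\alpha^2$ stages occur in total; on $\mathcal{B}^c\cap\mathcal{C}\cap\mathcal{D}$ the Beta--tail lemma of \cite{wang2018th} contributes $\km$; and $\mathcal{B}^c\cap\mathcal{C}^c$ forces $\eta_{\true}(t)>c>\eta_j(t)$ for all $j\ne\true$, hence $\mA^c(t)$. Summing gives $\km(1+2/\alpha^2)+\km+8\log(T)/\alpha^2$, which is the stated bound.
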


\subsection{{Finite stage} analysis}
\label{sec42} 

 In many applications  {experimental units} are  expensive,  and the number of stages may be limited when users  have to take costs into account.
In the following, we will take a closer look at the potential loss when the number of different stages has to be restricted. For this purpose we
denote by  $\xi_{\true}^*$ the optimal design  for the true model and  define 
$$
\Eff_{\true}(\xi)= \frac{ \phi_\true(\xi)}{\phi_\true(\xi_{\true}^*)}
\in[0,1]
$$
as the $\phi_\true$-efficiency of a  design $\xi$.
 The following 
result gives a lower bound for the expectation 
of this random variable.

\begin{theorem}\label{thm:risk22}
	Assume that Condition~\ref{ass:stability} holds  
 and  that all models in ${\cal M}$ are estimable by  $\xi_{(t)}^{\rm hyb} (t=1,\ldots, T)$,
 then
	\begin{equation}
		\E \big [ \Eff_{\true}(\xi_{\ms,T}) \big ]  \ge 1- \frac{(8+\alpha^2)\log(T)}{\alpha^2T}-\frac{\left(2+{2}/{\alpha^2}\right)\km}{T}.
	\end{equation}
\end{theorem}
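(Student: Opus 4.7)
\textbf{Proof proposal for Theorem~\ref{thm:risk22}.} My plan is to combine the regret-style bound from Theorem~\ref{thm:selection} with the concavity of $\phi_\true$. Since $n_1=\cdots=n_T$, the aggregated design in \eqref{result} simplifies to $\xi_{\ms,T}=\frac{1}{T}\sum_{t=1}^T \xi_{(t)}^{\rm hyb}$, so the information matrix is $I_\true(\xi_{\ms,T})=\frac{1}{T}\sum_{t=1}^T I_\true(\xi_{(t)}^{\rm hyb})$ by linearity. Concavity of $\phi_\true$ on the cone of non-negative definite matrices then gives
\begin{equation*}
\Eff_\true(\xi_{\ms,T}) \;\ge\; \frac{1}{T}\sum_{t=1}^T \Eff_\true(\xi_{(t)}^{\rm hyb}).
\end{equation*}

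Next I would bound each stage contribution from below. For every $t$ with $\Set_t=\true$, applying concavity again to $\xi_{(t)}^{\rm hyb}=\rho_t\xi^*_\true+(1-\rho_t)\xi_{\rm unif}$ and using $\phi_\true\ge 0$ yields
\begin{equation*}
\phi_\true(\xi_{(t)}^{\rm hyb}) \;\ge\; \rho_t\,\phi_\true(\xi^*_\true) + (1-\rho_t)\,\phi_\true(\xi_{\rm unif}) \;\ge\; \rho_t\,\phi_\true(\xi^*_\true),
\end{equation*}
hence $\Eff_\true(\xi_{(t)}^{\rm hyb})\ge\rho_t$. For the remaining stages $t$ with $\Set_t\neq \true$, I would simply use $\Eff_\true(\xi_{(t)}^{\rm hyb})\ge 0$, which is guaranteed by the estimability hypothesis. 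Writing $S_T=\{t\le T:\Set_t=\true\}$ and recalling $\rho_t=N_{\Set_t}(t)/(N_{\Set_t}(t)+1)$, the values of $\rho_t$ along $S_T$ enumerate $k/(k+1)$ for $k=1,\ldots,N_\true(T)$, so
\begin{equation*}
\sum_{t\in S_T}\rho_t \;=\; \sum_{k=1}^{N_\true(T)}\Bigl(1-\tfrac{1}{k+1}\Bigr) \;\ge\; N_\true(T)-\log(T+1).
\end{equation*}

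Combining the last two displays with $N_\true(T)=T-\sum_{t=1}^T\mathbb{I}(\mA(t))$ yields
\begin{equation*}
\Eff_\true(\xi_{\ms,T}) \;\ge\; 1-\frac{1}{T}\sum_{t=1}^T\mathbb{I}(\mA(t))-\frac{\log(T+1)}{T}.
\end{equation*}
Taking expectations and invoking the bound $\E[\sum_t\mathbb{I}(\mA(t))]\le \km(2+2/\alpha^2)+8\log(T)/\alpha^2$ from Theorem~\ref{thm:selection}, together with the elementary estimate $\log(T+1)\le \log(T)+1/T$, the two logarithmic contributions combine into the claimed $(8+\alpha^2)\log(T)/(\alpha^2 T)$ term, with any residual $O(1/T)$ absorbed into the $\km/T$ term since $\km\ge 1$.

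The main obstacle is bookkeeping rather than a deep technical issue: I must be careful that the indexing of $\rho_t$ correctly enumerates $\{k/(k+1)\}$ when restricted to $S_T$, that the concavity step is applied to the information matrix (not the design measure directly), and that the estimability assumption guarantees $\phi_\true(\xi_{(t)}^{\rm hyb})>0$ so the ratio defining $\Eff_\true$ is well-defined. Once these points are handled, the regret bound of Theorem~\ref{thm:selection} plugs in mechanically to finish the proof.
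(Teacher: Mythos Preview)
Your proposal is correct and follows essentially the same route as the paper: two applications of the concavity of $\phi_\true$, enumeration of $\rho_t$ along $S_T$ as $k/(k+1)$, a harmonic-sum bound, and then plugging in the regret estimate (the paper cites it as Lemma~\ref{lem:times}, which is the first part of Theorem~\ref{thm:selection}). One small bookkeeping remark: your residual $1/T^2$ from $\log(T+1)\le\log(T)+1/T$ cannot literally be ``absorbed into the $\km/T$ term'' since that term already appears with its exact coefficient, but this is harmless---the paper itself uses $\sum_{t=1}^T 1/t\le\log T$ at the analogous step, which has the same level of slack, so the stated constant is understood up to this inconsequential rounding.
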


In the following,  we  provide an analogous  result
to Theorem  \ref{thm:risk22} 
when the Algorithm \ref{alg:vanillathompson1} is modified as  proposed in Remark~\ref{remark:testset1}.
For this statement, we require a slightly stronger assumption  than Condition \ref{ass:stability}.

\begin{assumption}\label{ass:stability1}
 If the true model is estimable by the design 
$\xi_{(t)}^{\rm hyb}$
at stage $t$, 
then  there exists two constants $\alpha\in(0,1/2)$ and $c\in (\alpha,1-\alpha)$, such that
 any design $(1-\rho)\xi_l^*+\rho\xi_{\rm unif}$ with  $\rho\in
{\cal R}:= \{1/2,2/3,3/4 , \ldots\}$ and $l\in\{1,\ldots,\km\}$ satisfies   
$$
\theta_{\true}(\rho\xi_l^*+(1-\rho)\xi_{\rm unif})>c+\alpha,
$$ and 
$$
\theta_{j}(\rho\xi_l^*+(1-\rho)\xi_{\rm unif})<c-\alpha,
$$ for any $j\neq \true$.
Otherwise, for any estimable
 model $M_j$, it holds that
$$
\theta_{j}(\rho\xi_l^*+(1-\rho)\xi_{\rm unif})<c-\alpha.
$$
\end{assumption}
Equipped with Condition~\ref{ass:stability1}, we are ready to provide a lower bound of $\E [ \Eff_{\true}(\xi_{\ms,T})
      ]$, which also directly implies the analogous  result
of  Theorem  \ref{thm:optimal} for the modification of  Algorithm  \ref{alg:vanillathompson1}  according to Remark \ref{remark:testset1}. 

\begin{theorem}\label{thm:risk21}
  	Assume that Condition~\ref{ass:stability1} holds  
    for all $t \in \{1,\ldots, T\}$. 
If the design $\xi_{\ms,T}$ is calculated by  
the modification of 
Algorithm \ref{alg:vanillathompson1}  as  proposed in Remark~\ref{remark:testset1}, then 
  		\begin{equation}
  			\E \big [ \Eff_{\true}(\xi_{\ms,T})
     \big ] \ge 1-\frac{(8\km-8+\alpha^2)\log(T)}{\alpha^2T}-\frac{\left(3+{4}/{\alpha^2}\right)\km}{T}-\frac{1}{T}.
  		\end{equation}
\end{theorem}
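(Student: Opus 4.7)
The plan is to mirror the proof of Theorem~\ref{thm:risk22} while adapting each step to account for updates that are skipped when some model is not estimable at a stage. By the concavity of $\phi_\true$ together with the linearity of the information matrix in the design and the assumption $n_1=\cdots=n_T$, the output in~\eqref{result} satisfies
\[
\phi_\true\bigl(\xi_{\ms,T}\bigr) \;\ge\; \frac{1}{T}\sum_{t=1}^T \phi_\true\bigl(\xi_{(t)}^{\rm hyb}\bigr),
\]
so dividing by $\phi_\true(\xi_\true^*)$ yields $\Eff_\true(\xi_{\ms,T}) \ge T^{-1}\sum_{t=1}^T \Eff_\true(\xi_{(t)}^{\rm hyb})$.

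Next I would split the stages into ``good'' ones where $\Set_t=\true$ and ``bad'' ones where $\Set_t\neq\true$. On a bad stage I use $\Eff_\true(\xi_{(t)}^{\rm hyb})\ge 0$, and on a good stage concavity together with the non-negativity of $\phi_\true$ gives $\phi_\true(\rho_t\xi_\true^*+(1-\rho_t)\xi_{\rm unif}) \ge \rho_t\phi_\true(\xi_\true^*)$, hence $\Eff_\true(\xi_{(t)}^{\rm hyb}) \ge \rho_t = 1 - 1/(N_\true(t)+1)$. Since the values $\{N_\true(t)+1 : \Set_t=\true\}$ traverse the integers $1,2,\ldots,N_\true(T+1)$, the harmonic bound $\sum_{k=1}^{N_\true(T+1)}1/k \le 1+\log T$ yields $\sum_{t:\Set_t=\true}\rho_t \ge T - \sum_{t=1}^T\mathbb{I}(\Set_t\neq\true) - (1+\log T)$. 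Taking expectations,
\[
\E\bigl[\Eff_\true(\xi_{\ms,T})\bigr] \;\ge\; 1 - \frac{1+\log T}{T} - \frac{1}{T}\,\E\!\left[\sum_{t=1}^T\mathbb{I}(\Set_t\neq\true)\right].
\]

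The remaining step is a regret bound playing the role of Lemma~\ref{lem:times} for the modified algorithm, namely
\[
\E\!\left[\sum_{t=1}^T\mathbb{I}(\Set_t\neq\true)\right] \;\le\; \frac{(8\km-8)\log T}{\alpha^2} + \left(3+\frac{4}{\alpha^2}\right)\km;
\]
substituting this bound into the previous inequality produces the advertised claim. The main obstacle is establishing this regret bound. In the unmodified setting of Theorem~\ref{thm:risk22} every stage produces a Beta update and a careful coupling lets one aggregate the $\km-1$ competing arms into a single $8\log T/\alpha^2$ term \citep{Agrawal2012anlysis}. When the hybrid design does not render some model estimable, however, the corresponding Beta posteriors stall and the coupling breaks. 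I would therefore treat each non-true arm $j$ separately, conditioning on the estimability events and applying the Beta-concentration argument under Condition~\ref{ass:stability1} to bound the number of stages with $\Set_t=j$ by $8\log T/\alpha^2$, which after summing over $j\neq\true$ produces the factor $(\km-1)$. The mildly larger constant $(3+4/\alpha^2)$ in place of $(2+2/\alpha^2)$ arises from the extra error-probability terms needed to handle the non-estimable stages.
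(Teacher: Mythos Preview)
Your reduction is exactly the paper's: concavity plus the harmonic-sum bound gives
\(
\E[\Eff_\true(\xi_{\ms,T})]\ge 1-(1+\log T)/T-T^{-1}\E\bigl[\sum_t\mathbb{I}(\Set_t\neq\true)\bigr],
\)
and the regret bound you state is precisely the content of the paper's Lemma~\ref{lem:times1}, so substituting it yields the theorem as you claim.

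Where your sketch falls short is in justifying that regret bound. First, the $8\log T/\alpha^2$ term in the unmodified Lemma~\ref{lem:times} does not come from any ``coupling that aggregates the $\km-1$ arms''; it is simply the contribution of the deterministic event $\{t\le 8\log T/\alpha^2+1\}$ before any posterior has concentrated. In the modified setting the paper redefines $\mathcal{D}(t)=\bigcap_j\{\npull^{(j)}(t)>8\log T/\alpha^2\}$, where $\npull^{(j)}(t)=a_j(t)+b_j(t)-2$ counts how often $M_j$ has actually been checked, and the per-arm accounting on $\mathcal{A}(t)\cap\mathcal{D}^c(t)$ is what produces the factor $\km-1$. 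Second, your sketch invokes only ``Beta-concentration,'' but that handles the deviation of the Thompson sample $\eta_j(t)$ from the posterior mode $\hat\theta_j(t)$ (Lemma~\ref{lem:4-remark3}); one must separately control $\hat\theta_j(t)$ itself. In the modified algorithm $\hat\theta_j(t)$ is an average over the \emph{random} subset of stages at which $M_j$ was checked, so the paper (Lemma~\ref{lem:risk1-remark3}) conditions on the random checking times $\iota_1<\iota_2<\cdots$ and applies Hoeffding's inequality conditionally, using both clauses of Condition~\ref{ass:stability1}. This conditional-Hoeffding step is the genuine technical content of the extension to Remark~\ref{remark:testset1} and is not identified in your outline.
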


\begin{remark}
It is interesting to see that the risk bound in Theorem~\ref{thm:risk21} is  smaller compared with the bound in Theorem~\ref{thm:risk22}, especially for a large $\km$.
This is the price we  have pay for the cases where  one cannot explore all candidate models simultaneously.

As shown in \cite{LAI1985asymptotic}, the term $\log(T)$ is the leading order term in the lower bound for any multi-armed bandit algorithm. 
This can be regarded as a necessary price we pay for model exploration. 
Roughly speaking, in our algorithm the random variables  $\eta_j(t)$ may not be concentrated around the mode of the  $\mathrm{Beta}(a_j(t),b_j(t))$ distribution when $a_j(t)+b_j(t) $ is much smaller than $O(\log(T))$.

\end{remark}

\section{Numerical studies}\label{sec:sim}
  \def\theequation{5.\arabic{equation}}	
	\setcounter{equation}{0}
 
In this section  we study the finite sample properties of  the proposed method through several examples.

\subsection{Nonlinear models for dose-response studies}\label{ex:3}
In our first example we consider the following three non-linear models which are widely adopted in the dose-response experiments.  The three candidates are the Emax, linear and exponential model defined by 
\begin{equation}
\label{mod1}
\begin{split}
	M_1 :& ~~Y=
 f_1(d,\bm\beta_1)+\varepsilon:=
 \beta_0 +\beta_1 \frac{d}{d+\beta_2}+\varepsilon,\\
	M_2 :&~~  Y=
 f_2(d,\bm\beta_2)+\varepsilon:=
 \beta_0 +\beta_1 d +\varepsilon,\\
	M_3 :&~~ Y=
 f_3(d,\bm\beta_3)+\varepsilon:=
 \beta_0 +\beta_1 \exp(d/\beta_2)+\varepsilon,
 \end{split}
\end{equation}
respectively, where we assume that the error is standard normal distributed.
Following the setting in Section 10.5.3 of \cite{pinheiro_analysis_2006}, the dose $d$ varies in the interval  $[0,1]$ and the noise is normal distributed  with standard deviation $\sigma=0.65$. 
The vector of parameters is given by $\bm\beta_1= (0.2,0.7\delta,0.2)^\top$, $\bm\beta_2= (0.2,0.6\delta)^\top$ and $\bm\beta_3= (0,0.2,1/\log(1+3\delta))^\top$, for model  $M_1$, $M_2$ and $M_3$, respectively.
Here the parameter $\delta$ is used to control the signal-to-noise ratio (SNR) ${\rm var}(\E(Y|d))/\sigma^2$.
%, defined in Chapter 11.6 of \cite{Hastie09ESL}. 
Note that  $d_\infty:=\min_{i\neq j}\sup_d|f_i(d,\bm\beta_i)-f_j(d,\bm\beta_j)|$  is given by   $0.5$ and $0.9$ for  $\delta=3$  and  $\delta=5$, respectively. We set that total sample size to be $n=150$.

In this example, we investigate  the $D$-optimality criterion, that is 
$\phi_j (\xi ) =  \big ( \det(I_{j}(\xi))\big )^{1/p_j}$. Consequently, the $D$-efficiency of a design for the true model is given by
\begin{equation}\label{det12}
  \Eff_{\true}^D(\xi) =
    \Big ( \frac{\det(I_{\true}(\xi))}{\det(I_{\true}(\xi_{\true}^*))
    } \Big )^{1/p_{\true}} ~, 
\end{equation}
 where $p_{\true}$ is the dimension of  the  vector of  parameters for the true model. 
%all three models under consideration.
 The locally $D$-optimal designs for the Emax ($M_1$) and exponential model ($M_3$) have been determined explicitly  in \cite{dette2010optimal}. They are uniform designs supported $d=0$ and $d=1$ and a third point in the interval $(0, 1)$, which depends on the 
 model under consideration. These authors also demonstrated that a design for a given model has often low $D$-efficiency for another model.  The optimal design for the linear model ($M_2$) is is  supported at $0$ and $1$ with equal weights .

We apply  Algorithm~\ref{alg:vanillathompson1}  with $T={10}$ stages and sample size $n_t= 15$ for $t=1, \ldots , {10}$  to conduct a sequential design.
Additionally, a ``standard design'' adopted in \cite{pinheiro_analysis_2006}
 with equal weights at the points $\{0,0.05,0.2,0.6,1\}$, %\citep[see][]{pinheiro_analysis_2006}, 
a uniform design with five support points, a compound design
maximizing   $\{\Eff_{M_1}^D(\xi)\Eff_{M_2}^D(\xi)\Eff_{M_3}^D(\xi)\}^{1/3}$ and a hybrid optimal design defined by   $\sum_{k=1}^33^{-1}\xi_{k}^*$ are   considered for the sake comparison. 
These  designs can be found  in Section \ref{seca5} of  online supplement. 	
Besides the efficiency, the designs are evaluated by their selection accuracy (ACC), which is defined as  the probability  that 
 the true model is selected  using the BIC for model selection based on the final $150$ observations
 (which are obtained by the different designs).
 The efficiency  and   selection accuracy are estimated   by  $500$  simulation runs. 
In each simulation we use the $150$ observations 
obtained by Algorithm~\ref{alg:vanillathompson1} for model selection and parameter estimation. 
 For  the other  designs  considered in our study we simply take  $150$ 
new observations  at each replication
according to the design under consideration.

\begin{figure}[t!]
%\begin{tabular}{ccc}
\begin{subfigure}[b]{0.3\columnwidth}
			\centering
   	\caption{$M_1$
   %true, SNR=3.75
   }%\label{fig:5-emax}
    	\includegraphics[width=1.0\columnwidth]{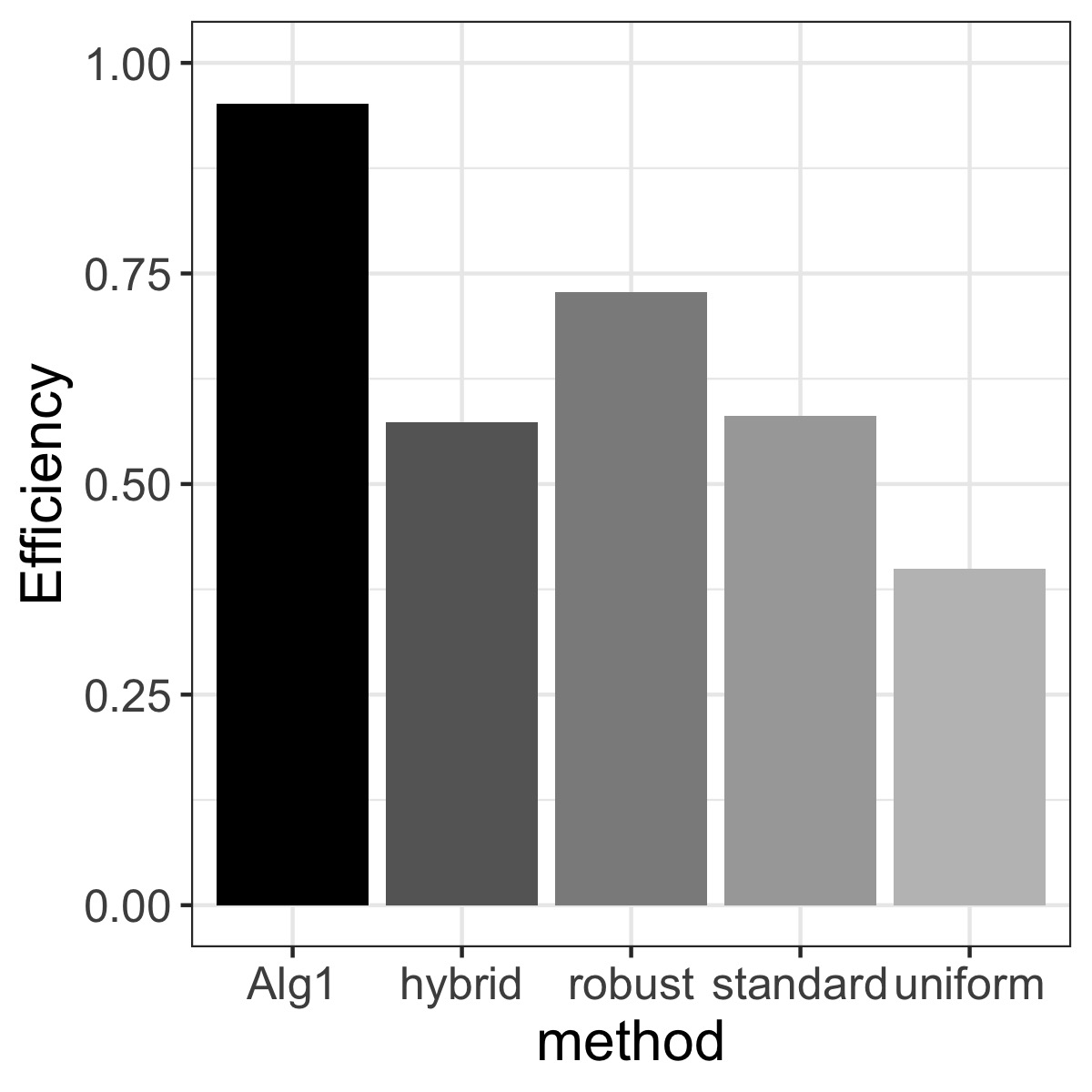}
		\end{subfigure}
		\begin{subfigure}[b]{0.3\columnwidth}
			\centering
   			\caption{$M_2$
   %true, SNR=3.75}
   }%\label{fig:5-linear}
			\includegraphics[width=1.0\columnwidth]{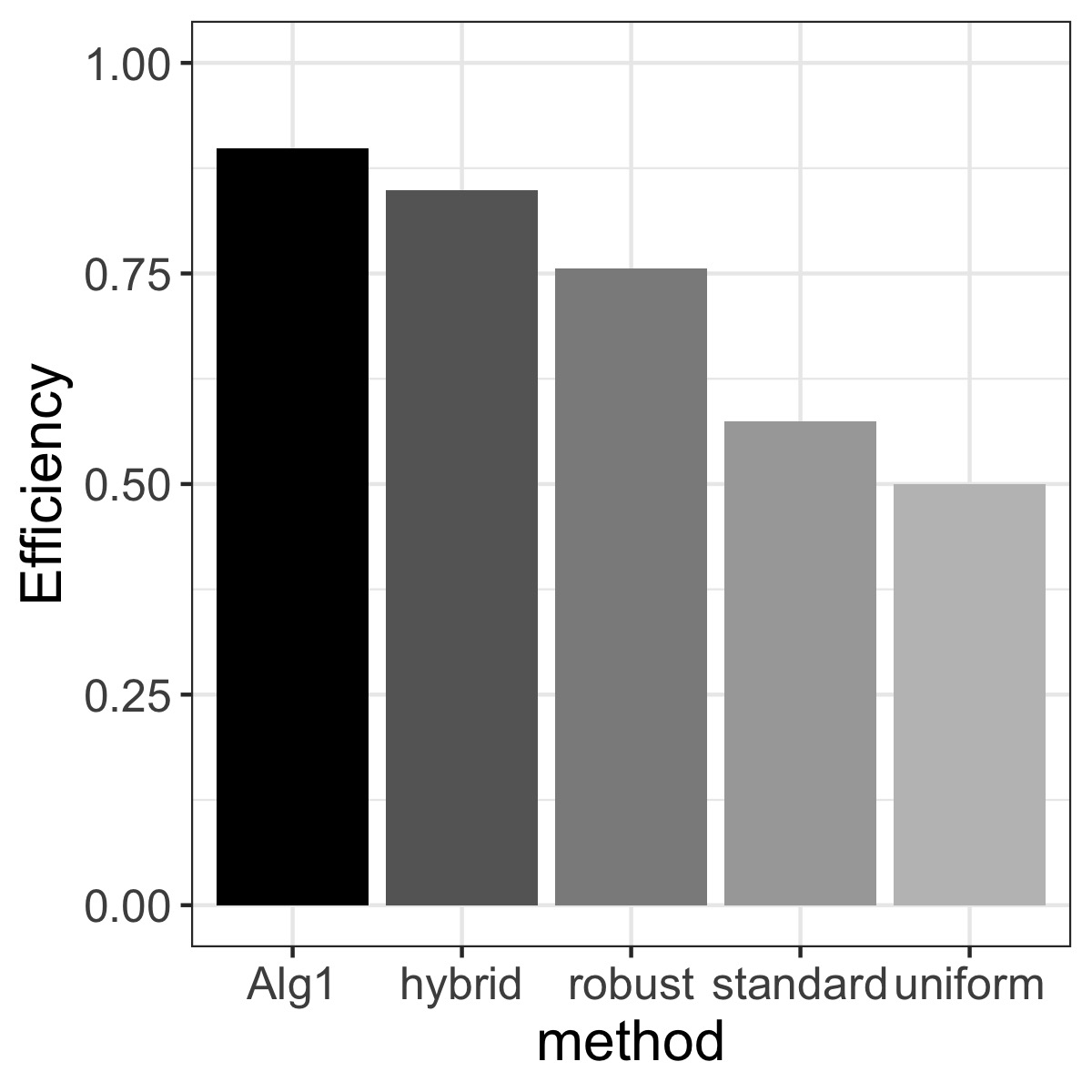}
		\end{subfigure}
		\begin{subfigure}[b]{0.3\columnwidth}
			\centering
   		\caption{$M_3$ 
   %true, SNR=3.75
   }%\label{fig:5-exp}
			\includegraphics[width=1.0\columnwidth]{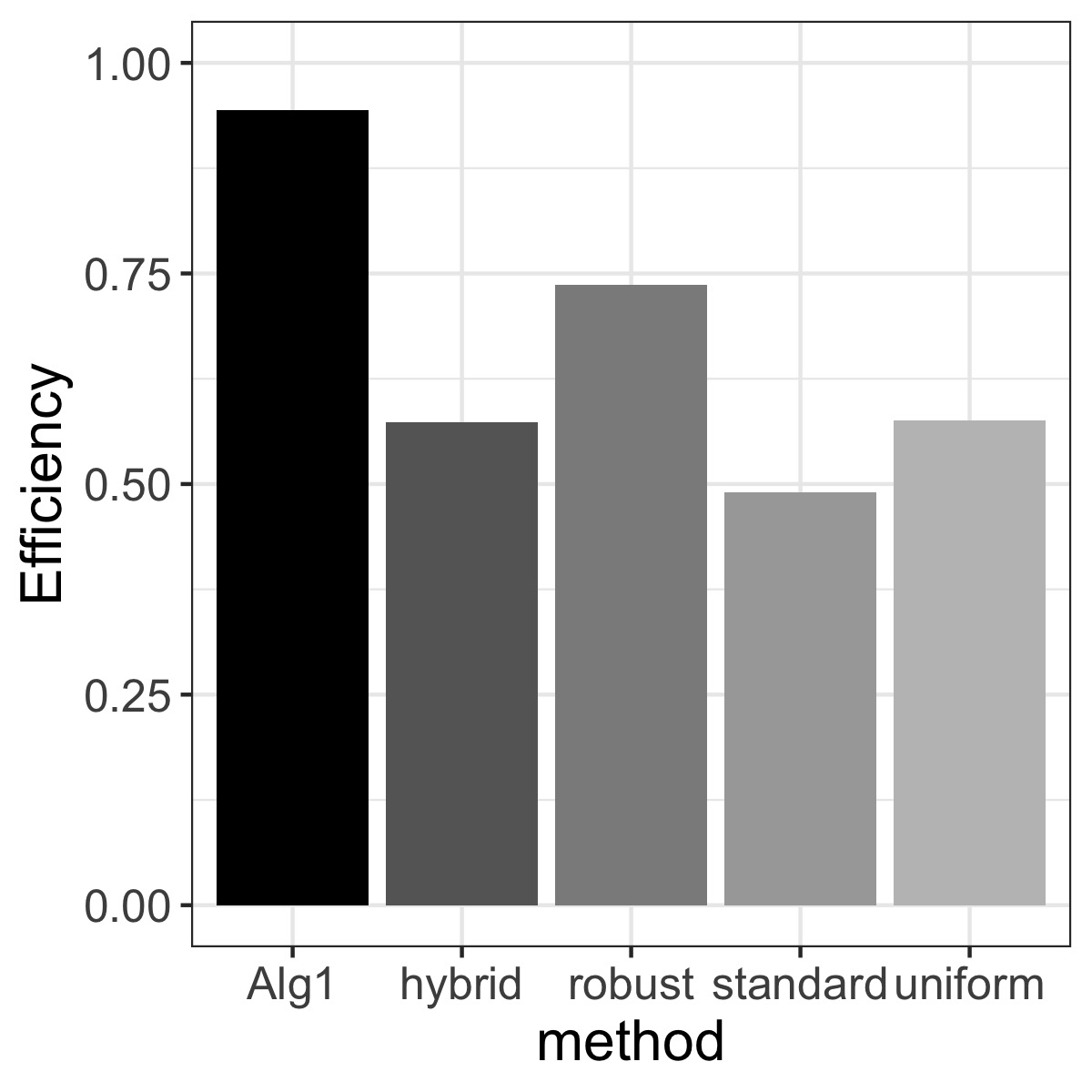}
		\end{subfigure}\\
 % \end{tabular}%\caption*{$\delta=8$}
  %\begin{tabular}{ccc}
  \begin{subfigure}[b]{0.3\columnwidth}
			\centering
   		\caption{$M_1$
   %true, SNR=1.35
   }%\label{fig:3-emax}
			\includegraphics[width=1.0\columnwidth]{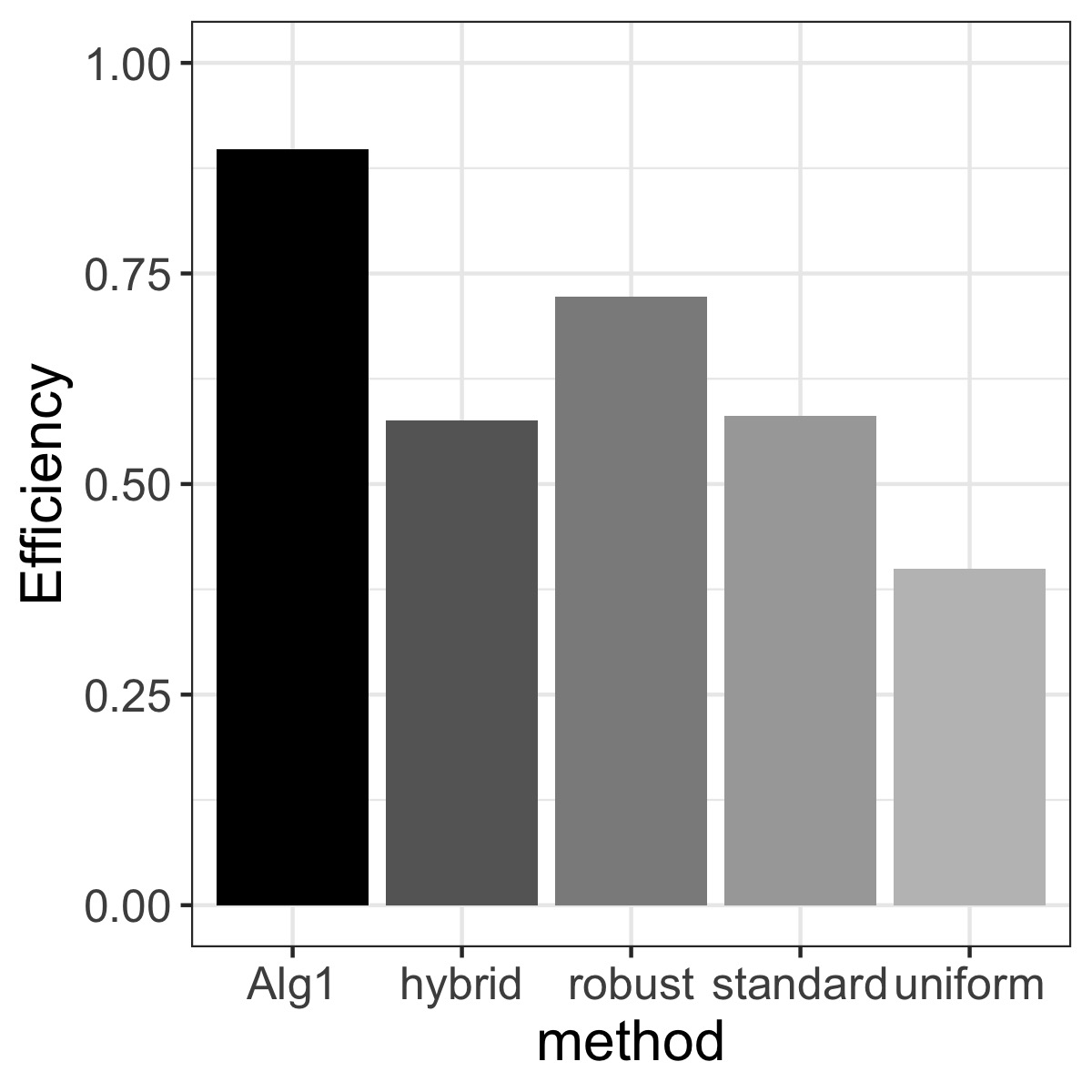}
		\end{subfigure}
		\begin{subfigure}[b]{0.3\columnwidth}
			\centering
   			\caption{$M_2$ 
   %true, SNR=1.35
   }%\label{fig:3-linear}
			\includegraphics[width=1.0\columnwidth]{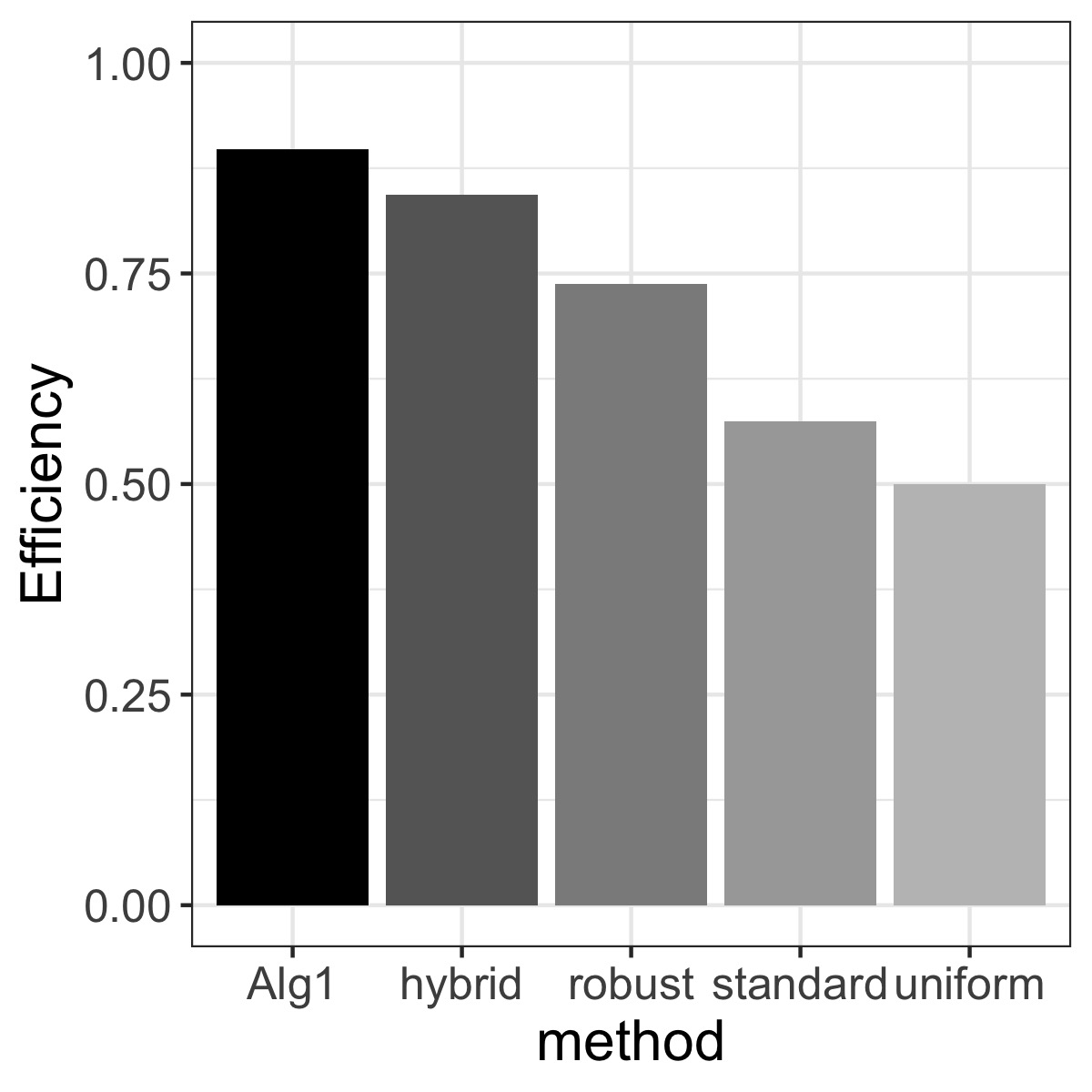}
		\end{subfigure}
		\begin{subfigure}[b]{0.3\columnwidth}
			\centering
   		\caption{$M_3$ 
 %  true, SNR=1.35
   }%\label{fig:3-exp}
			\includegraphics[width=1.0\columnwidth]{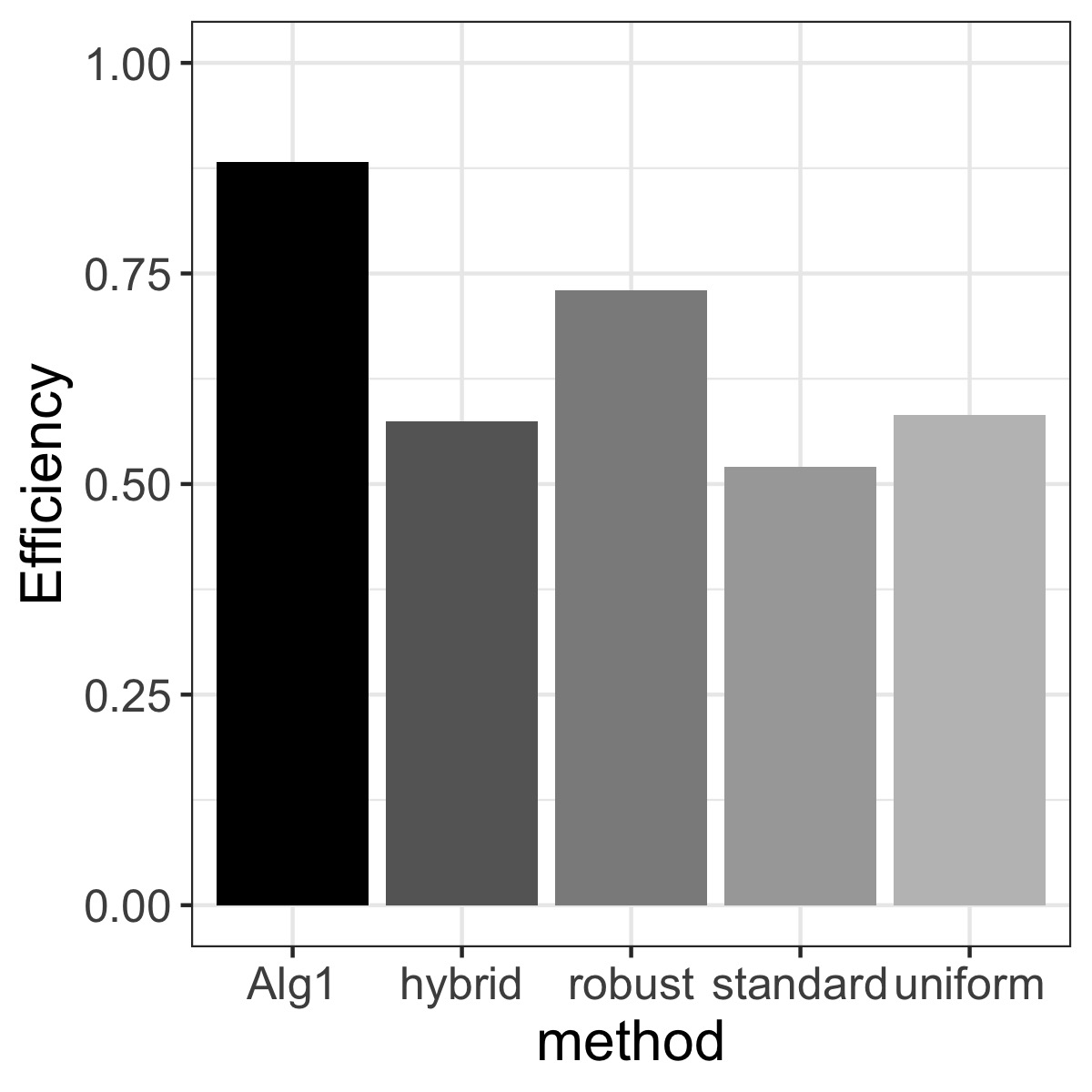}
		\end{subfigure}
  %\end{tabular}%\caption*{$\delta=3$}
		\caption{\label{fig:case3-barplot-eff}{
 \it 
 $D$-efficiencies defined in \eqref{det12} of different designs  under different true models defined in \eqref{mod1}.
 Upper part: SNR=3.75;  lower part: SNR=1.35.
  }}
		
	\end{figure}

The $D$-efficiencies are displayed in Figure~\ref{fig:case3-barplot-eff}.
We observe that for the SNR of $3.75$
the efficiencies  of 
the design  $\xi_{\ms.T}$ calculated by Algorithm \ref{alg:vanillathompson1}   are  close to $1$ in all cases under consideration.
 On the other hand, for  a  SNR  of $1.35$, the $D$-efficiency of the sequential designs are a little  smaller. Note that  in this case it is harder to distinguish the dose-response curves among the three models based on $15$ observations, which implies a smallr value of  $\alpha$ in Condition~\ref{ass:stability}.
 Thus the numerical results for  both scenarios   confirm our theoretical findings in Theorem~\ref{thm:optimal}  and \ref{thm:risk22}. Moreover, in all cases   under consideration the $D$-efficiencies of the proposed sequential  designs are larger than the efficiencies of the other competing designs under consideration.

 In Figure \ref{fig:case3}, we investigate 
the relation between the  $D$-efficiency   and total  sample size $n$ in case, where 
 $M_1, M_2$, and $M_3$ is  the true model and 
 the SNR is either $1.35$ or $3.75$. 
We  observe that    the 
 $D$-efficiencies quickly increase  to $1$  when the number of stages 
 increases. We observe that for a SNR of $1.35$  the $D$-efficiencies of the sequential  designs obtained by Algorithm~\ref{alg:vanillathompson1}  are a little smaller  than for $3.75$. The difference difference is mostly visible for model $M_3$. It can be explained 
 by the fact that in this case the linear and the exponential model 
 are very similar and therefore  the BIC tends to select a simpler model instead of the  true model. 
 
 \begin{figure}[H]
%\begin{tabular}{ccc}
\begin{subfigure}[b]{0.3\columnwidth}
			\centering
   \caption{$M_1$}
   \vspace{-.2cm}
    	\includegraphics[width=1.0\columnwidth]{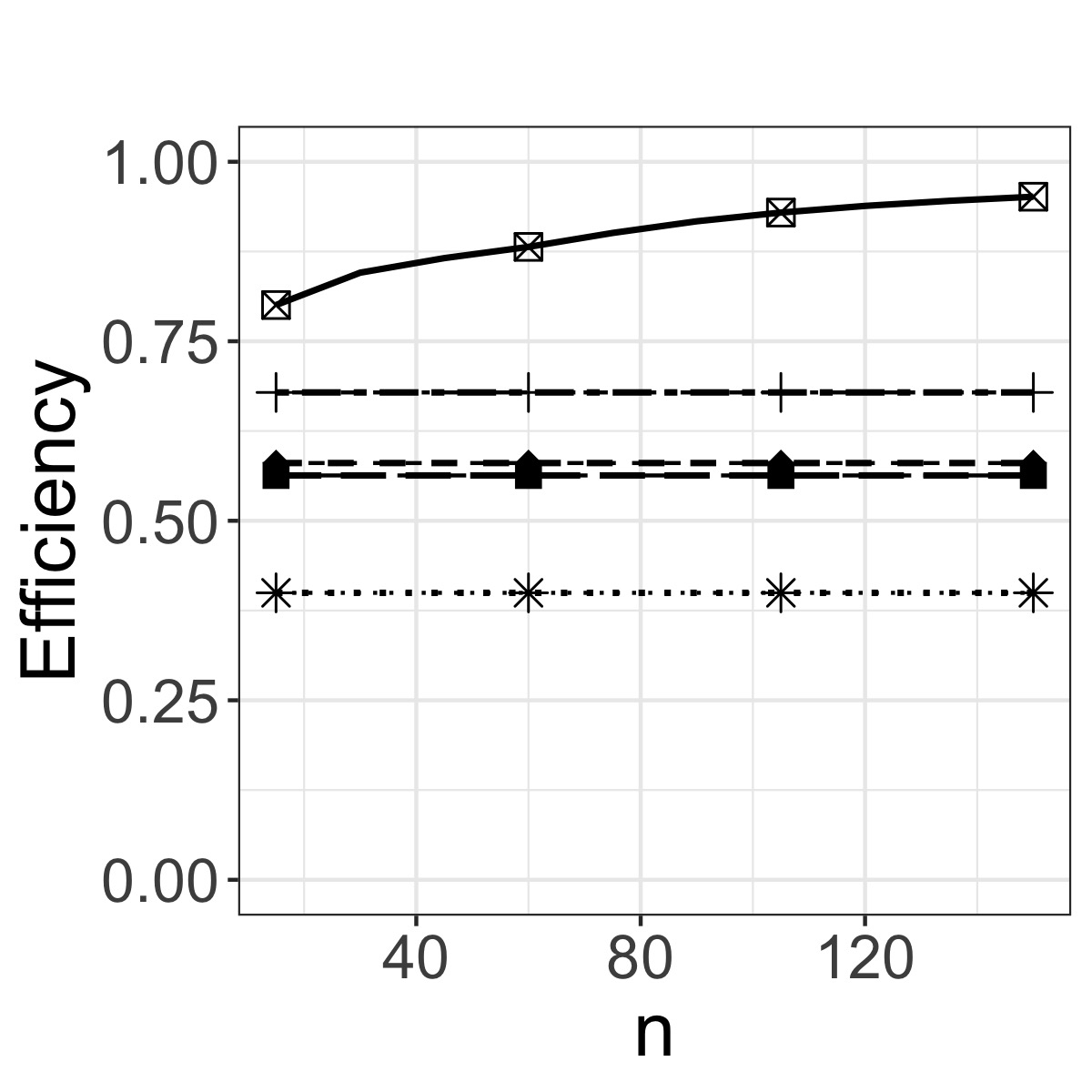}
		%	\caption{\ZF{$M_1$ true, SNR=3.75}}%\label{fig:5-emax}
		\end{subfigure}
		\begin{subfigure}[b]{0.3\columnwidth}
			\centering
     \caption{$M_2$}
        \vspace{-.2cm}
			\includegraphics[width=1.0\columnwidth]{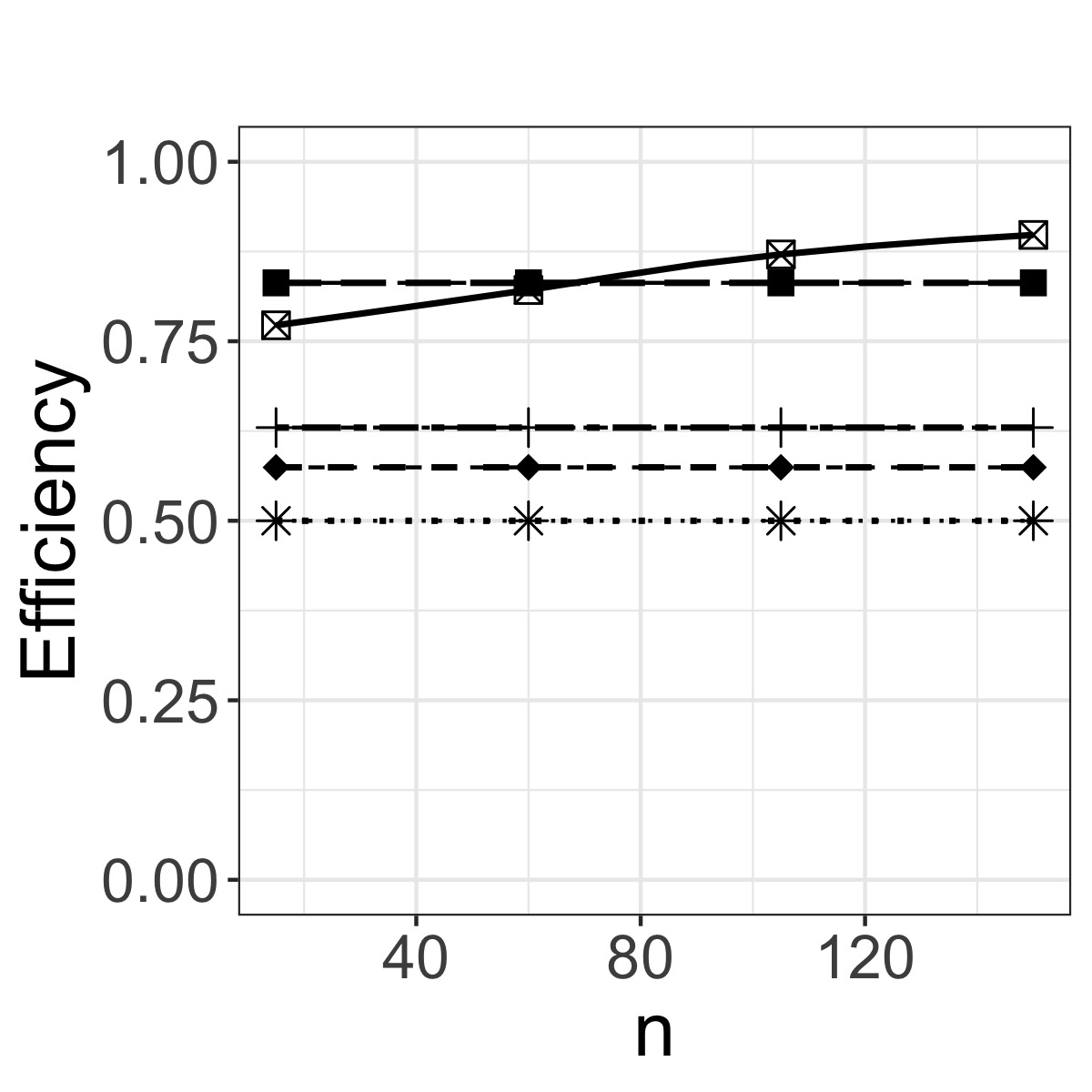}
			%\label{fig:5-linear}
		\end{subfigure}
		\begin{subfigure}[b]{0.3\columnwidth}
			\centering
     \caption{$M_3$}
        \vspace{-.2cm}
			\includegraphics[width=1.0\columnwidth]{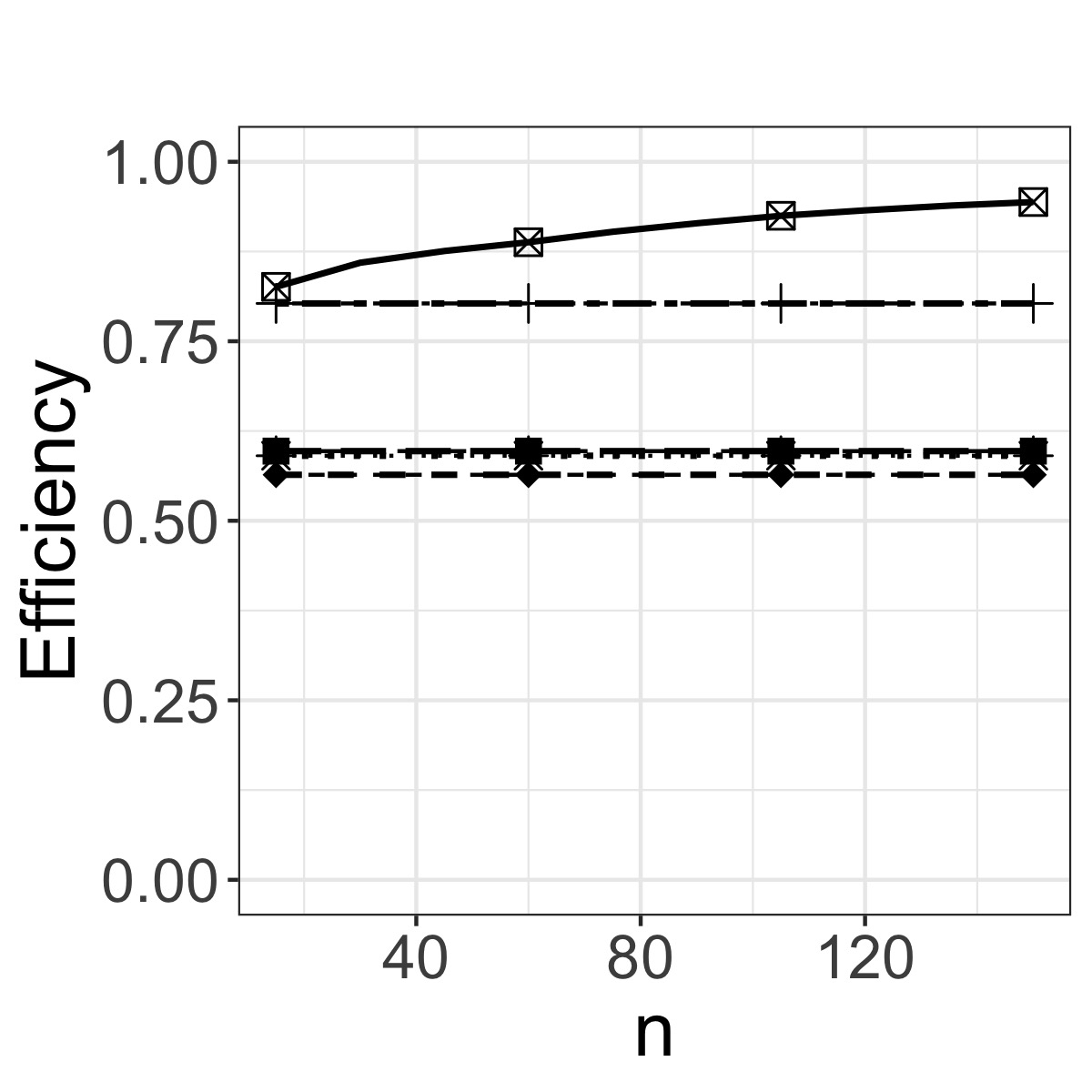}
		%	\caption{\ZF{$M_3$ true, SNR=3.75}}%\label{fig:5-exp}
		\end{subfigure}\\
 % \end{tabular}%\caption*{$\delta=8$}
  %\begin{tabular}{ccc}
  \begin{subfigure}[b]{0.3\columnwidth}
			\centering
       \caption{$M_1$}
        \vspace{-.2cm}
			\includegraphics[width=1.0\columnwidth]{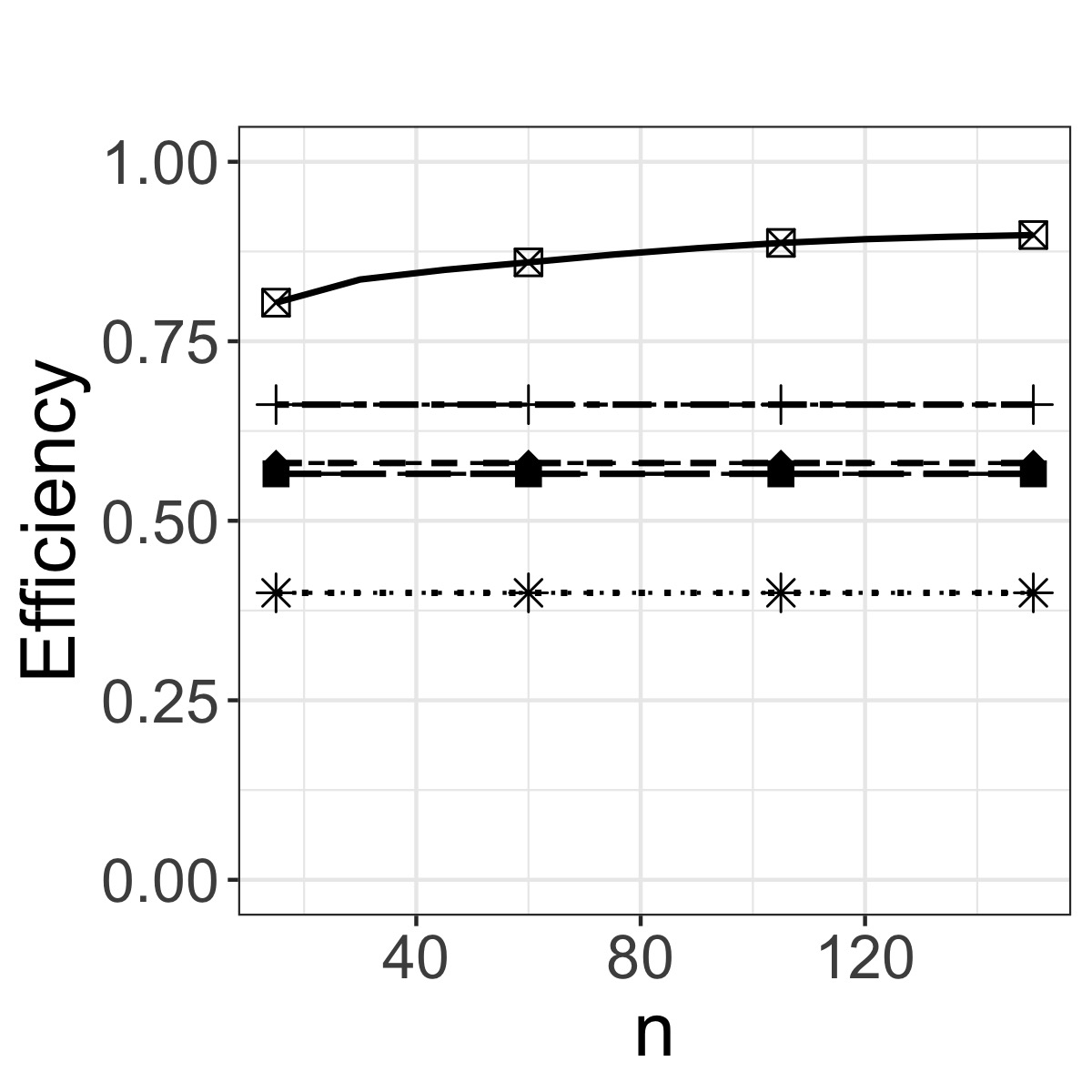}
	%		\caption{$M_1$ true, SNR=1.35}%\label{fig:3-emax}
		\end{subfigure}
		\begin{subfigure}[b]{0.3\columnwidth}
			\centering
       \caption{$M_2$}
        \vspace{-.2cm}
			\includegraphics[width=1.0\columnwidth]{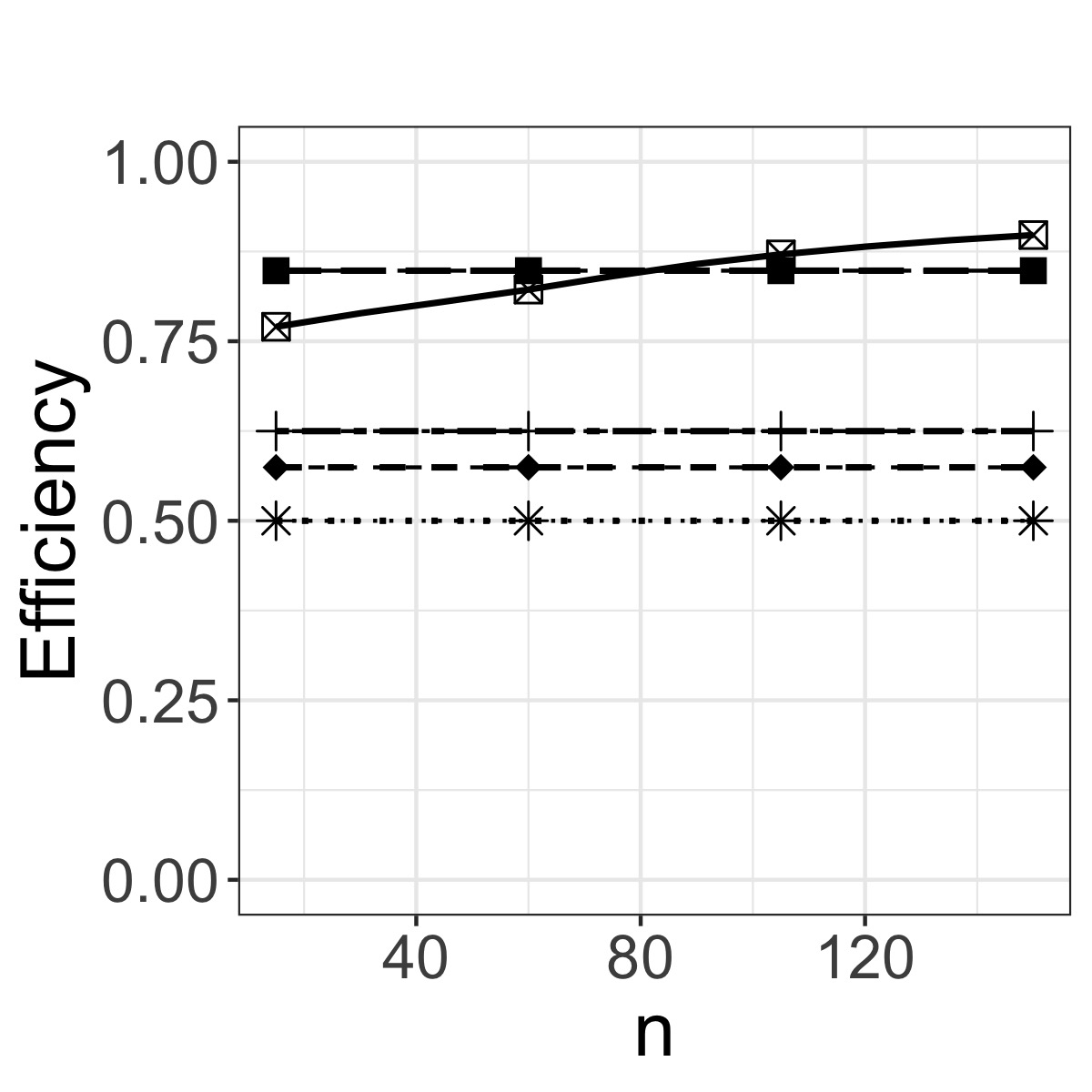}
		%	\caption{$M_2$ true, SNR=1.35}%\label{fig:3-linear}
		\end{subfigure}
		\begin{subfigure}[b]{0.3\columnwidth}
			\centering
       \caption{$M_3$}
        \vspace{-.2cm}
			\includegraphics[width=1.0\columnwidth]{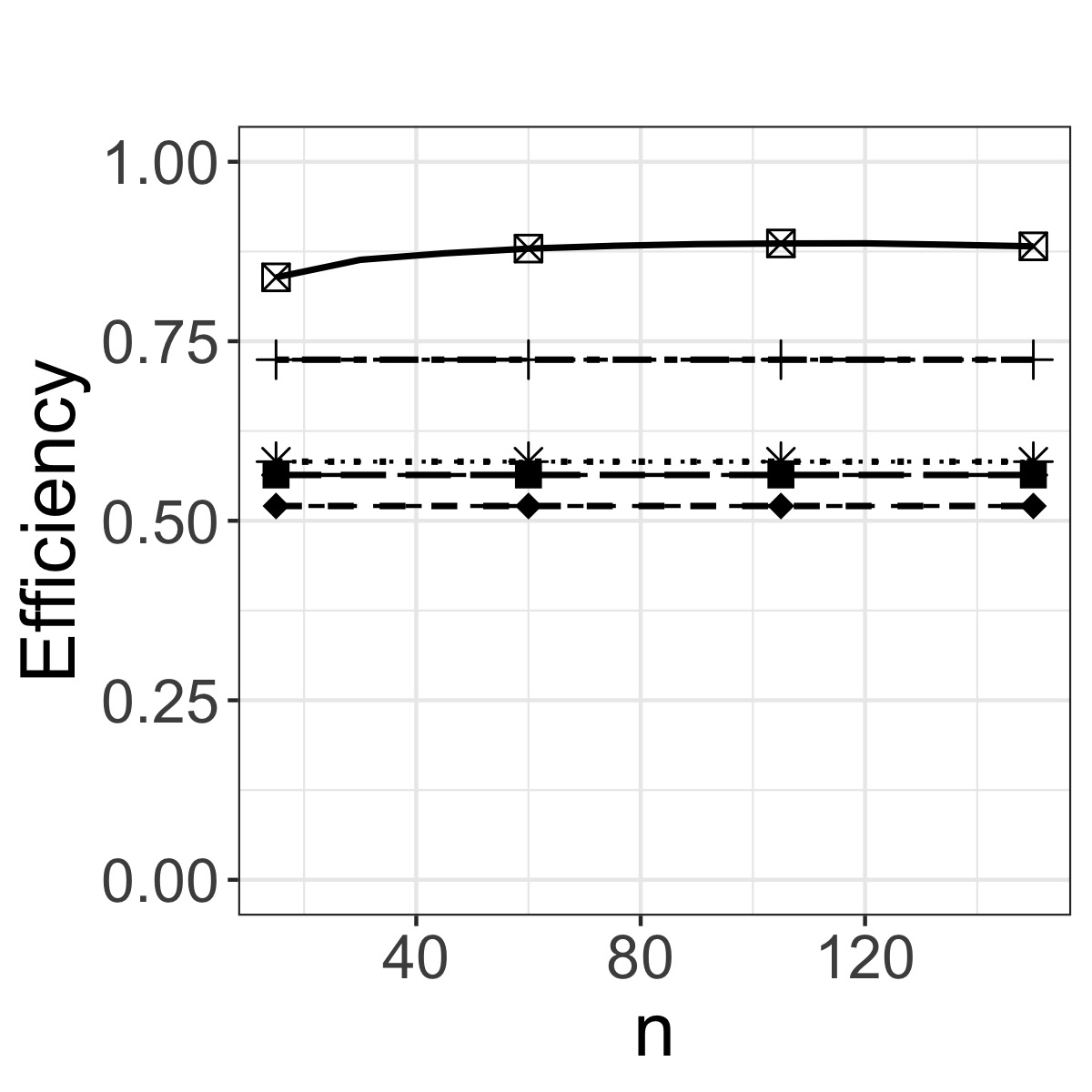}
		%	\caption{$M_3$ true, SNR=1.35}%\label{fig:3-exp}
		\end{subfigure}
  %\end{tabular}%\caption*{$\delta=3$}
		\caption{\label{fig:case3}{
 \it 
 $D$-efficiencies \eqref{det12} for the true model 
  as  the the total sample size $n$  of the sequential design is increasing. 
  The models are given by \eqref{mod1}.
  Five designs are compared: sequential design constructed by Algorithm \ref{alg:vanillathompson1} ($\boxtimes$);  robust design ($+$);  uniform design with five supports  ($\ast$); standard design  ($\blacklozenge$);  hybrid design ($\blacksquare$).
  Upper part: SNR=3.75;  lower part: SNR=1.35. 
  }}	
	\end{figure}
 
The corresponding selection accuracy of the BIC based on $150$ observations from the different designs is displayed in Figure \ref{fig:case3-barplot-acc}, and we observe that the differences between the different designs are rather small, except in the case where $M_3$ is the true model and the SNR is $1.35$. Here the uniform design and the design obtained by Algorithm \ref{alg:vanillathompson1} yield the best accuracy.

\begin{figure}[t!]
%\begin{tabular}{ccc}
\begin{subfigure}[b]{0.3\columnwidth}
			\centering
   \caption{ $M_1$}
    	\includegraphics[width=1.0\columnwidth]{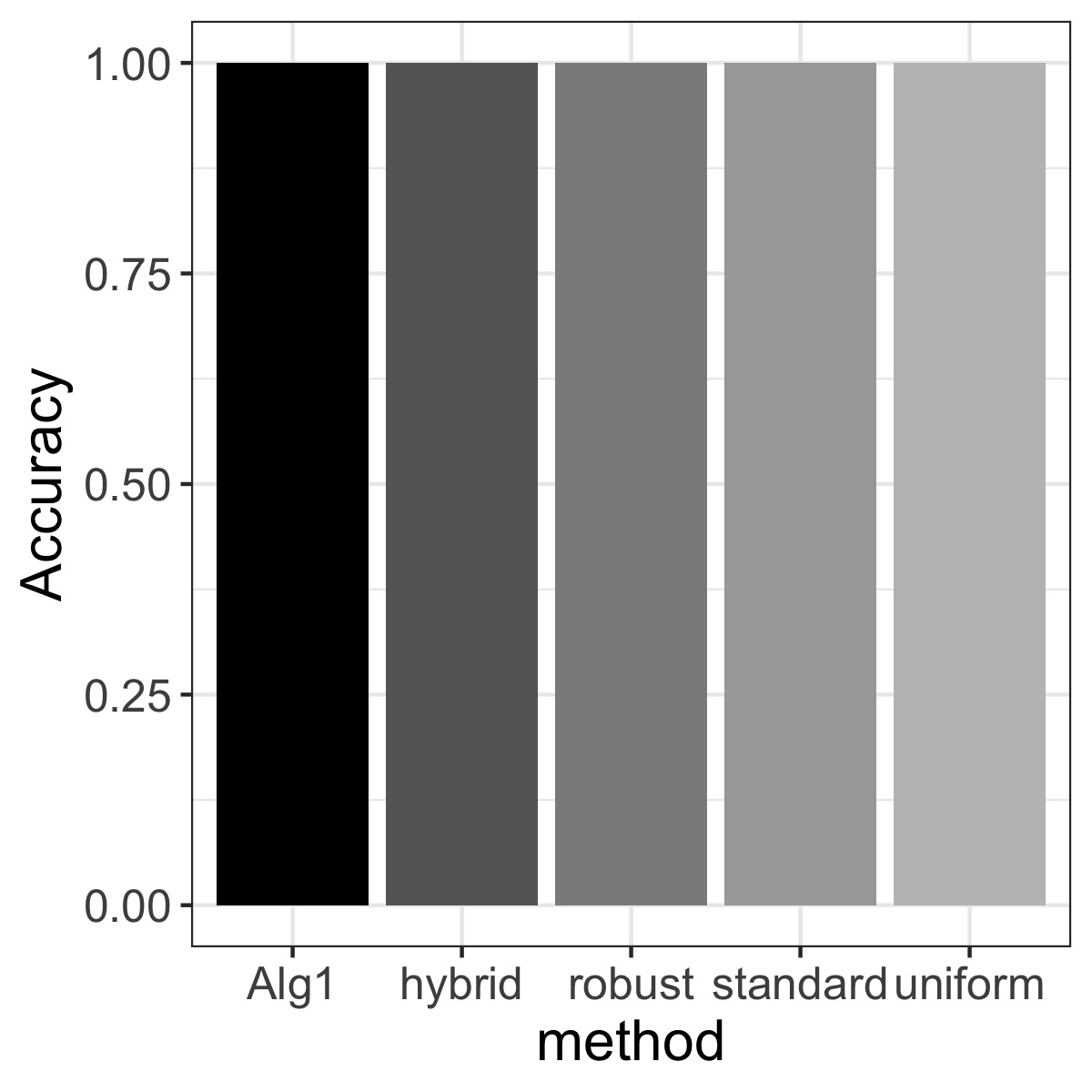}
	%		\caption{\ZF{$M_1$ true, SNR=3.75}}%\label{fig:5-emax}
		\end{subfigure}
		\begin{subfigure}[b]{0.3\columnwidth}
			\centering
      \caption{ $M_2$}
			\includegraphics[width=1.0\columnwidth]{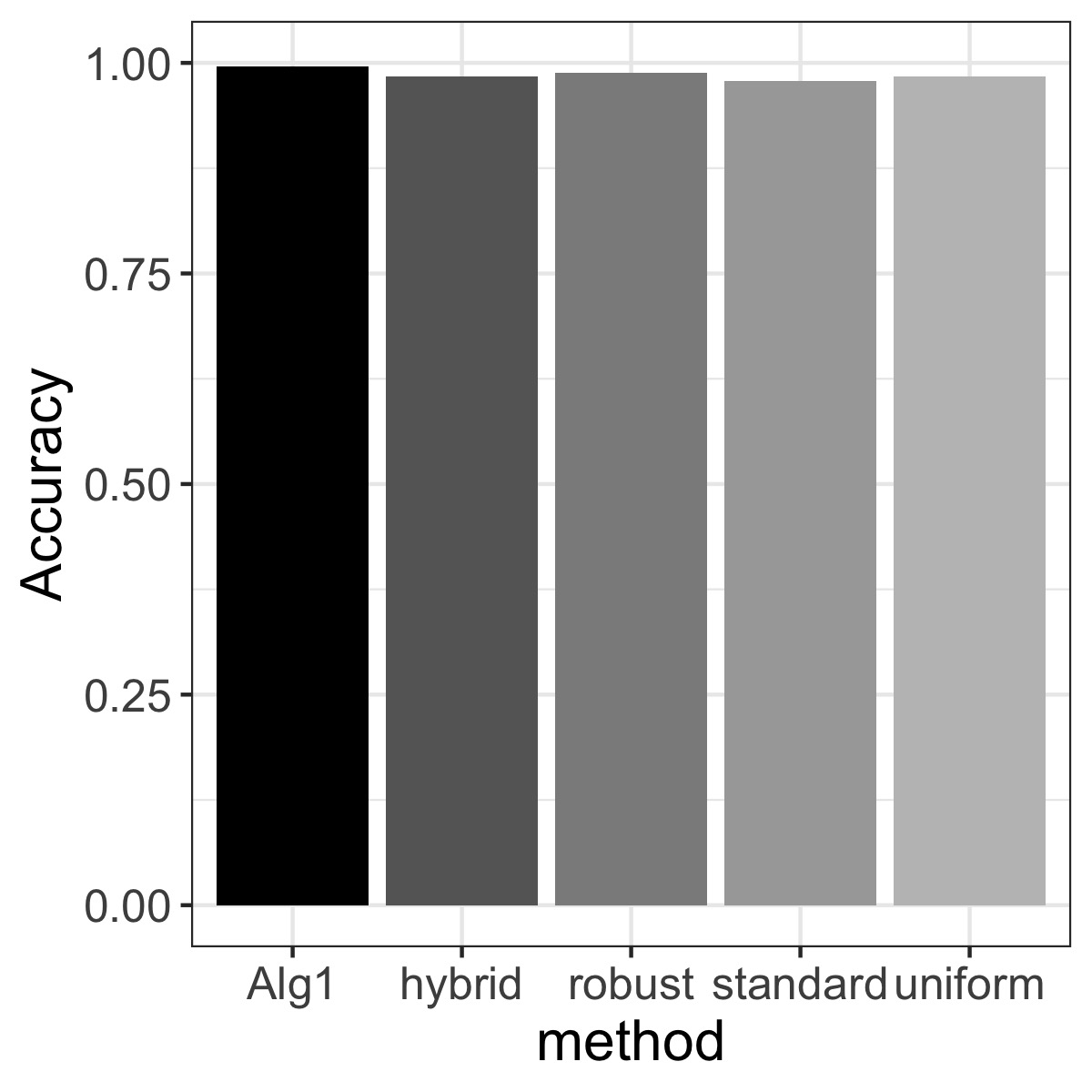}
		%	\caption{\ZF{$M_2$ true, SNR=3.75}}%\label{fig:5-linear}
		\end{subfigure}
		\begin{subfigure}[b]{0.3\columnwidth}
			\centering
      \caption{ $M_3$}
			\includegraphics[width=1.0\columnwidth]{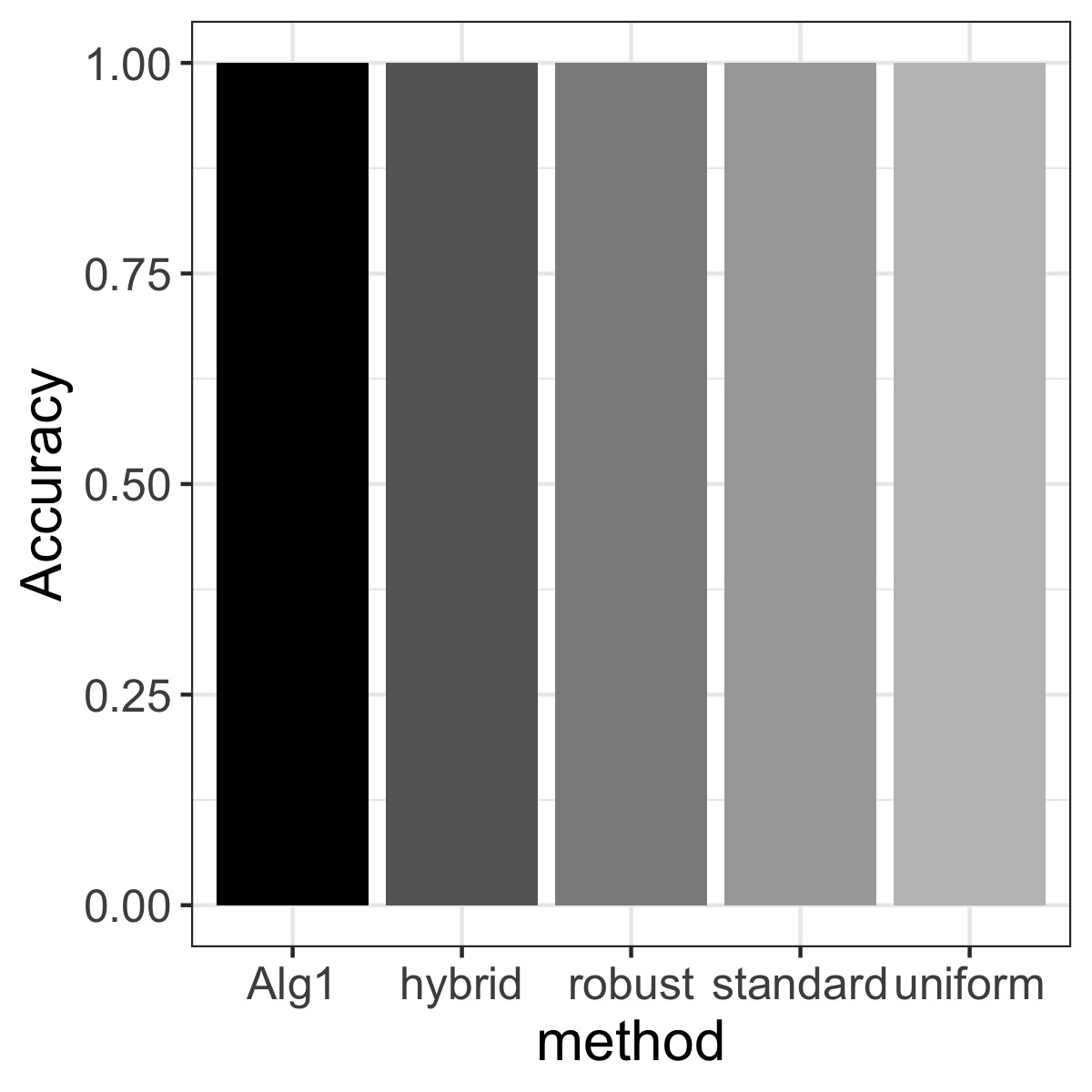}
	%		\caption{\ZF{$M_3$ true, SNR=3.75}}%\label{fig:5-exp}
		\end{subfigure}\\
 % \end{tabular}%\caption*{$\delta=8$}
  %\begin{tabular}{ccc}
  \begin{subfigure}[b]{0.3\columnwidth}
			\centering
         \caption{ $M_1$}
			\includegraphics[width=1.0\columnwidth]{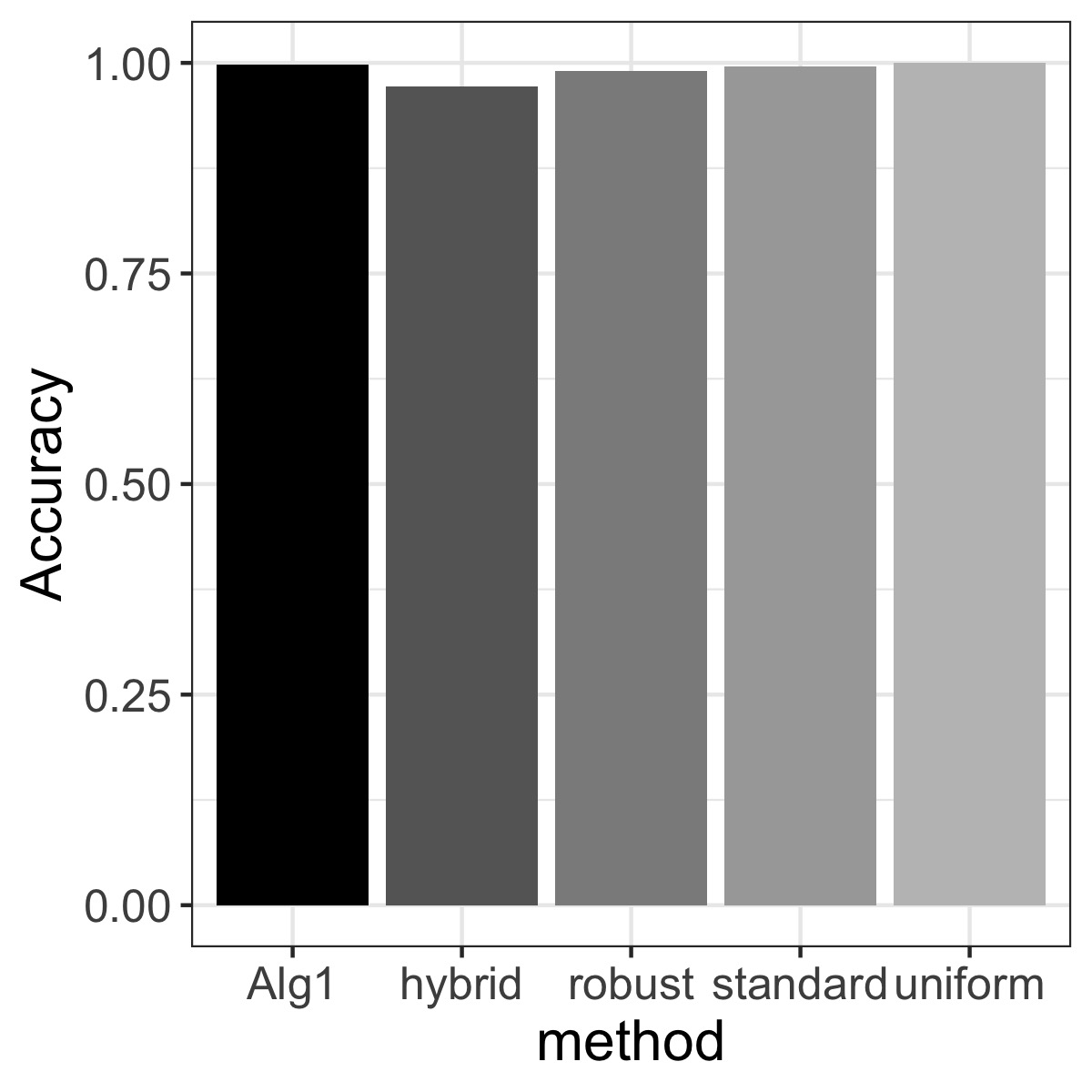}
		%	\caption{$M_1$ true, SNR=1.35}%\label{fig:3-emax}
		\end{subfigure}
		\begin{subfigure}[b]{0.3\columnwidth}
			\centering
         \caption{ $M_2$}
			\includegraphics[width=1.0\columnwidth]{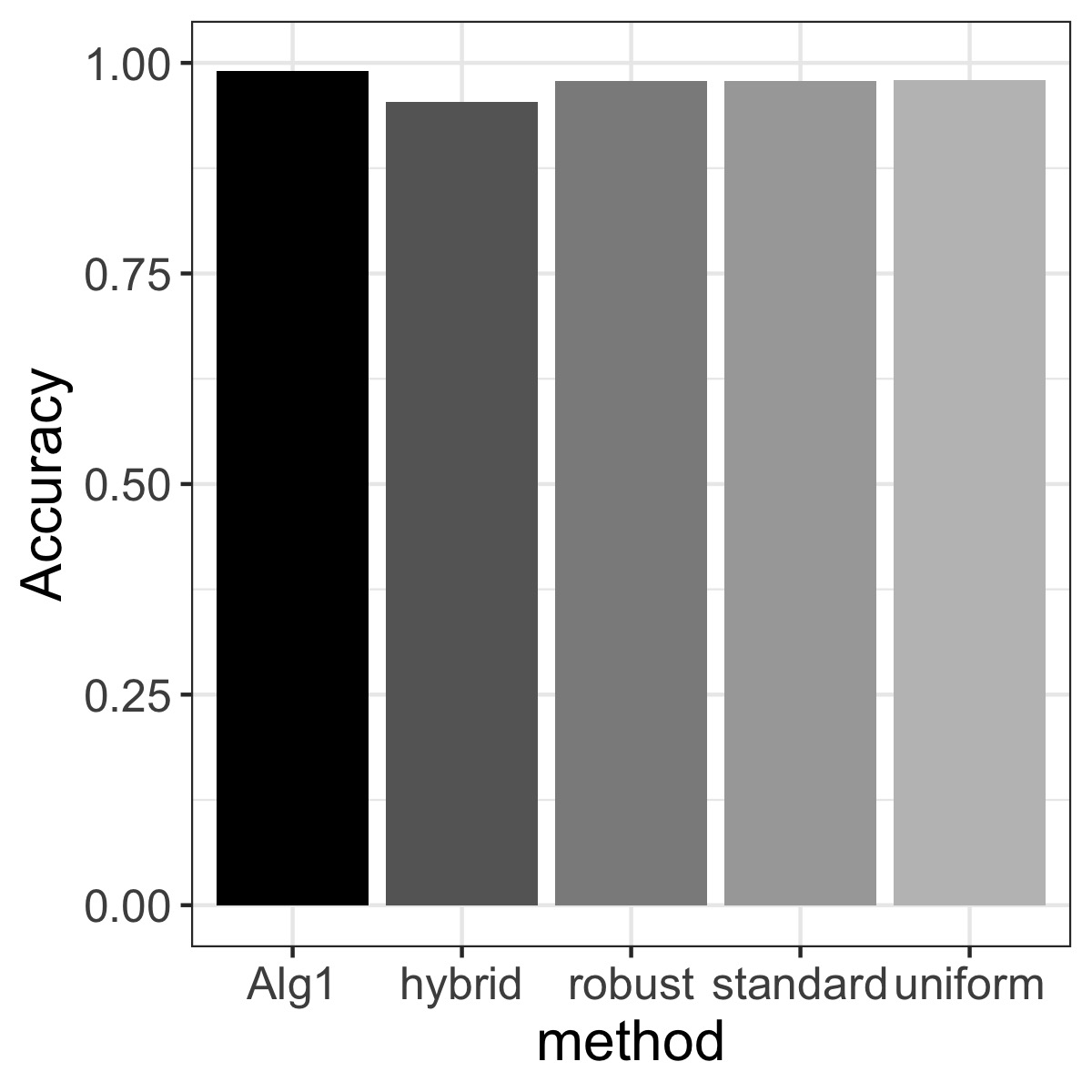}
		%	\caption{$M_2$ true, SNR=1.35}%\label{fig:3-linear}
		\end{subfigure}
		\begin{subfigure}[b]{0.3\columnwidth}
			\centering
         \caption{ $M_3$}
			\includegraphics[width=1.0\columnwidth]{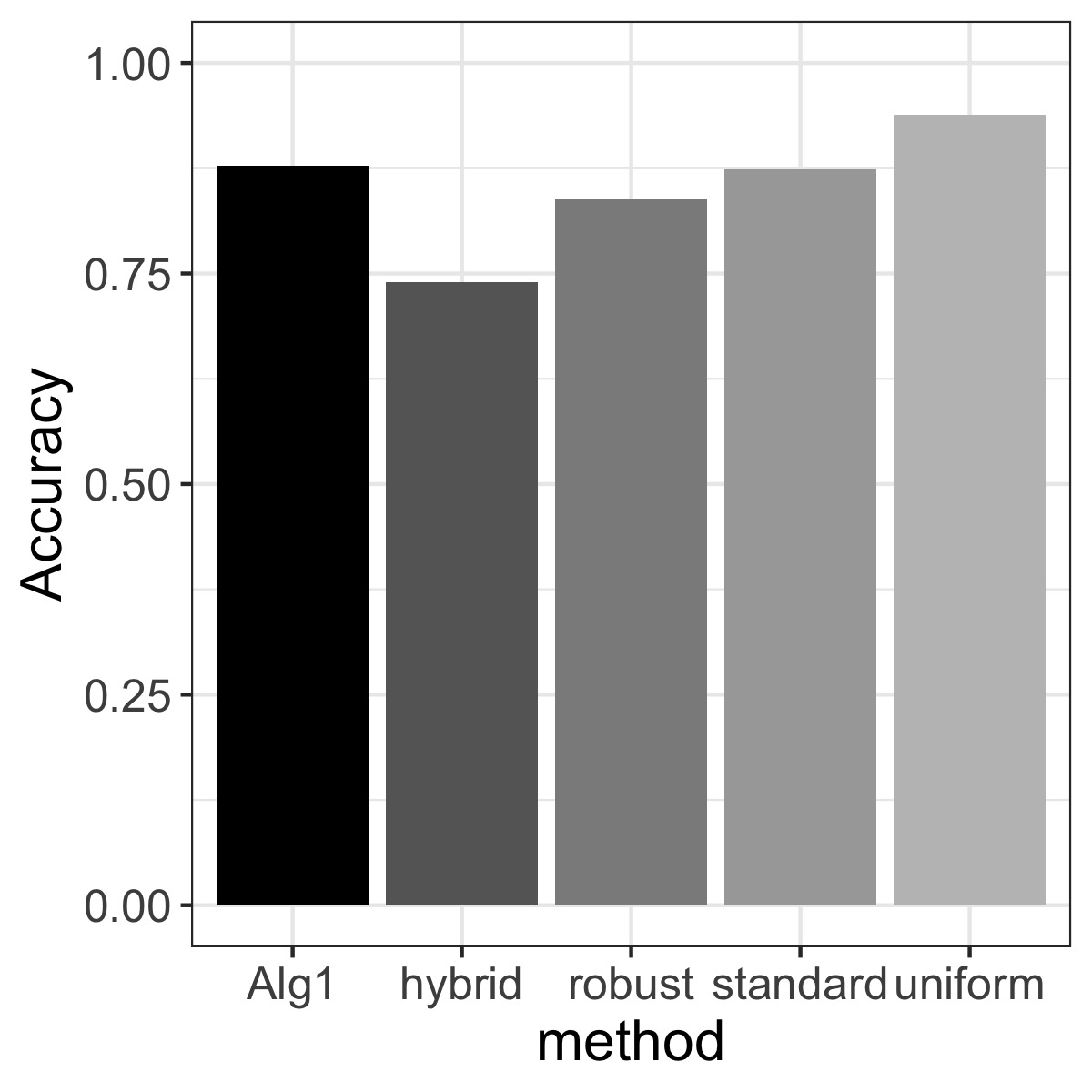}
		%	\caption{$M_3$ true, SNR=1.35}%\label{fig:3-exp}
		\end{subfigure}
  %\end{tabular}%\caption*{$\delta=3$}
		\caption{\label{fig:case3-barplot-acc}{
 \it 
 Selection accuracy of the BIC  based on $150$ observations from  different designs in  different true models defined  by \eqref{mod1}.   Upper part: SNR=3.75;  lower part: SNR=1.35.
 }}
	\end{figure}

\subsection{Models for robust parameter design}\label{robust}

    In a robust parameter design, the input variable are divided into two classes: control factors $(\bm x)$ and noise factors $(\bm z)$. Control factors refer to the design factors  whose values are fixed once they are chosen.  Noise factors refers to the design variables that are hard or expansive to control during the user conditions but can be observed.
    More details can be found in Chapter 11 of \cite{wu2021introduction}.
    Engineers aim to find a setting of control factors to make it less sensitive to noise variation. We consider a typical example 
    with three control factors and three noise factors and the model 
    \begin{align*}
     M_{\rm full}: Y=&\beta_0+\beta_1x_1+\beta_2x_2 + \beta_3x_3+(\beta_4+\beta_5x_1+\beta_6x_2+\beta_7x_3)z_1\\ &+(\beta_8+\beta_9x_1+\beta_{10}x_2+\beta_{11}x_3)z_2
      +(\beta_{12}+\beta_{13}x_1+\beta_{14}x_2+\beta_{15}x_3)z_3
        +\varepsilon,  
    \end{align*}
   to explore the response surface and  
   choose the control factor  $\bm x$ such that the coefficients of $\bm z$, for example $\beta_4+\beta_5x_1+\beta_6x_2+\beta_7x_3$, $\beta_8+\beta_9x_1+\beta_{10}x_2+\beta_{11}x_3$, $\beta_{12}+\beta_{13}x_1+\beta_{14}x_2+\beta_{15}x_3$, are close to zero.
   In practice, one may not known which noise factor has impact to the response.
   Thus, we consider the following seven reduced model as candidates
   \begin{align} \nonumber
       M_1:&~ Y=\beta_0+\beta_1x_1+\beta_2x_2 + \beta_3x_3+\varepsilon,  \\
   \nonumber    M_2:&~ Y=\beta_0+\beta_1x_1+\beta_2x_2 + \beta_3x_3+(\beta_4+\beta_5x_1+\beta_6x_2+\beta_7x_3)z_1 
        +\varepsilon,\\
  \nonumber     M_3:&~ Y=\beta_0+\beta_1x_1+\beta_2x_2 + \beta_3x_3+(\beta_4+\beta_5x_1+\beta_6x_2+\beta_7x_3)z_2 
        +\varepsilon,\\
  \label{mod2} 
  M_4:&~ Y=\beta_0+\beta_1x_1+\beta_2x_2 + \beta_3x_3+(\beta_4+\beta_5x_1+\beta_6x_2+\beta_7x_3)z_3 
        +\varepsilon,\\
\nonumber     M_5 :&~ Y=\beta_{0}+\beta_{1}x_1+\beta_{2}x_2+\beta_{3}x_3+(\beta_{4}+\beta_5x_1+\beta_6x_2)z_1+(\beta_{7}+\beta_{8}x_1+\beta_{9}x_3)z_3+\varepsilon,\\
\nonumber   M_6 :&~ Y=\beta_{0}+\beta_{1}x_1+\beta_{2}x_2+\beta_{3}x_3
   +(\beta_{4}+\beta_5x_1+\beta_6x_2)z_1+(\beta_{7}+\beta_{8}x_2+\beta_{9}x_3)z_3
   +\varepsilon,\\
\nonumber     M_7 :&~ Y=\beta_{0}+\beta_{1}x_1+\beta_{2}x_2+\beta_{3}x_3+(\beta_{4}+\beta_5x_1+\beta_6x_3)z_1+(\beta_{7}+\beta_{8}x_2+\beta_{9}x_3)z_2+\varepsilon,
   \end{align}
   where both the control  and noise factor 
   vary in the interval $[-1,1]$.
    In this case, the $D$-optimal design for the model $M_{\rm full}$ is a product designs of  two $2^3$ full factorial designs with eight replications, and this design  is also  $D$-optimal for the   models $M_1, \ldots ,  M_7$. However, when model $M_1$ is the true model, one can just use a $2^3$ full factorial design with only $64$ replications to achieve the same statistical efficiency.
   Clearly, the later one is more appreciated since it certainly reduces the costs in controlling the noise factors
  
  We now illustrate the application of  Algorithm~\ref{alg:vanillathompson1}, where  the total number of experiment units is fixed as $n=512$ and the  size  $n_t$ of observations at each stage $t$ depends on  the design we adopt. To be precise, when we choose an  optimal design for the models $M_1, \ldots M_4$,  
  at stage $t$, we set $n_t=16$. If we choose 
   the optimal design for the models  $M_5, \ldots M_7$ we set $n_t=32$. When a component $z_j$  of $\bm z$ is not controlled by the selected design at stage $t$, we assume that it is generated from a uniform distribution on the interval $[\min(x_j,0),\max(x_j,0)]$.
   Since the noise factor can be easily observed, we also assume that the value of $\bm z$ 
   will be recorded such  that all candidate models are estimable. Therefore  we  use  Algorithm \ref{alg:vanillathompson1} with 
 $\rho_t =0$   for all $t$.

The $D$-efficiencies  of the sequential  design generated by Algorithm \ref{alg:vanillathompson1}  are shown  in Figure~\ref{fig:case_robust}  for  an increasing
total sample size $n$, where  the true model is given by  $M_1$, $M_2$, and $M_5$. 
 As in Section \ref{ex:3}, we compare the new method with  the   
 robust optimal design which maximize the geometric mean of the $D$-efficiencies 
 in the  models  $M_1 - M_7$, 
the  uniform design with $64$ support points, and  a hybrid design defined by  $\sum_{k=1}^77^{-1}\xi_{k}^*$. 
%}Act as technology owner for several of our smart charging services such as dynamic load management, solar charging, dynamic charging based on time of use tariffs, flexibility marketing, and vehicle to home/grid across several of our European markets 

The uniform design has rather low efficiencies. If model $M_1$ is the true model, all  other designs have efficiency one. If $M_2$ or $M_5$ is the true model, the efficiency of the sequential design increases quickly and exceeds the efficiency of the hybrid design already for sample sizes {$48$ and $80$} (usually after three or four stages), respectively.
Moreover, if the sample size is further increased the efficiency approaches $1$. For example, of $n=256$, the efficiencies of the sequential design are approximately $ 97\%$, if the true model is given by $M_2$ or $M_5$.

To further compare the optimal design $\xi^*_{\true}$  for the true model and the robust optimal design   with the sequential designs  generated by  Algorithm~\ref{alg:vanillathompson1} we  evaluate 
their costs in Figure~\ref{fig:case_robust-barplot-cost}.  
To be precise, the costs are  calculated by $1+5m_z$ for each experimental unit where $m_z$ is the number of noise factors that  are controlled in the corresponding unit. We observe  that the new method  method has competitive performance with the optimal design $\xi^*_{\true}$ for the true model
(the $D$-efficiencies are larger than $98\%$). 
Moreover, the sequential design generated by Algorithm \ref{alg:vanillathompson1}  and  the design $\xi^*_{\true}$ (which can only be used if the true model is known before the experiment) ~require a substantially smaller  budget compared to  the robust optimal design, because this design controls some redundant noise factors.

   \begin{figure}[t!]
\begin{subfigure}[b]{0.3\columnwidth}
			\centering
       \caption{$M_1$}
			\includegraphics[width=1.0\columnwidth]{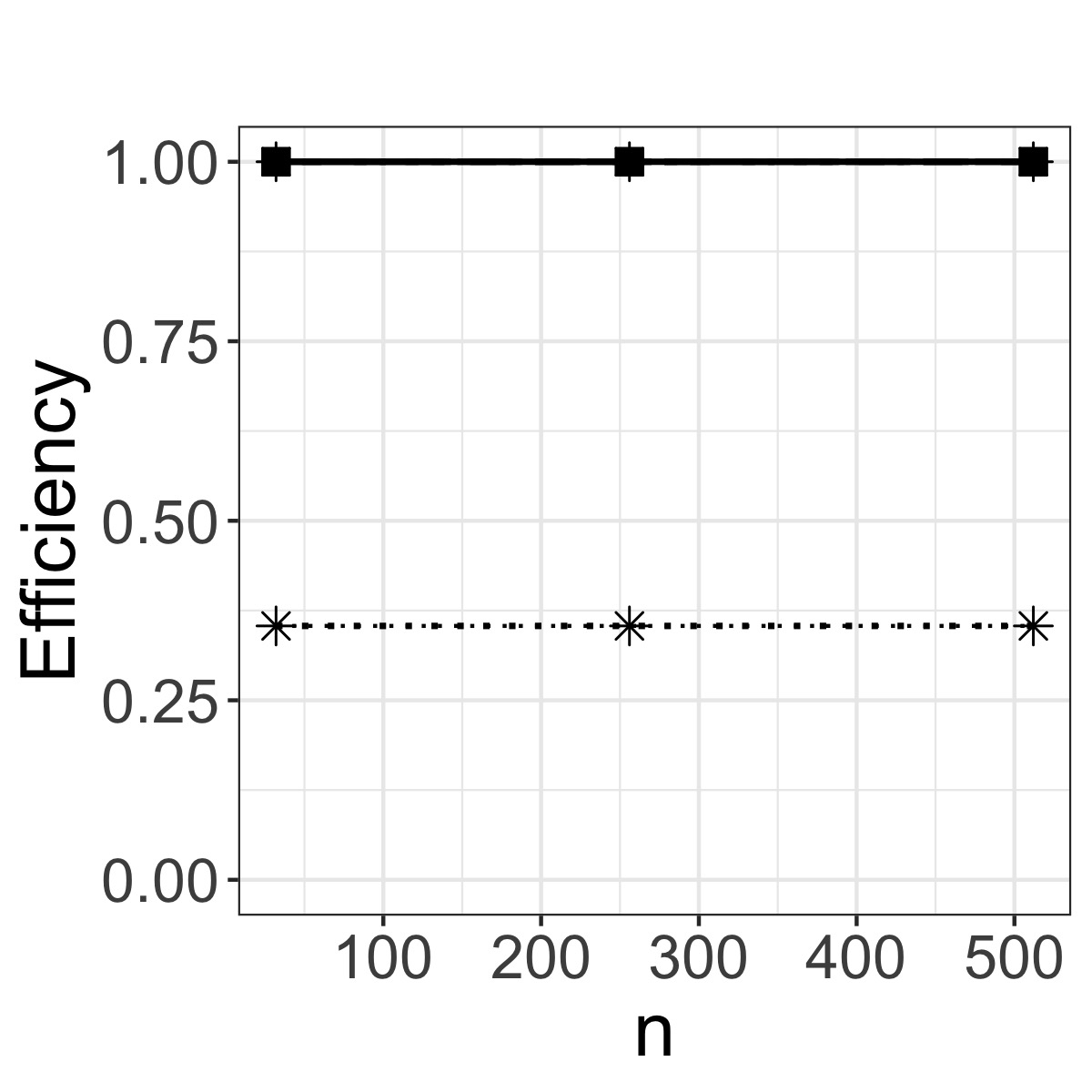}
%   \caption{$M_1$ true}
			%\label{fig:M1}
		\end{subfigure}
		\begin{subfigure}[b]{0.3\columnwidth}
			\centering
       \caption{$M_2$}
			\includegraphics[width=1.0\columnwidth]{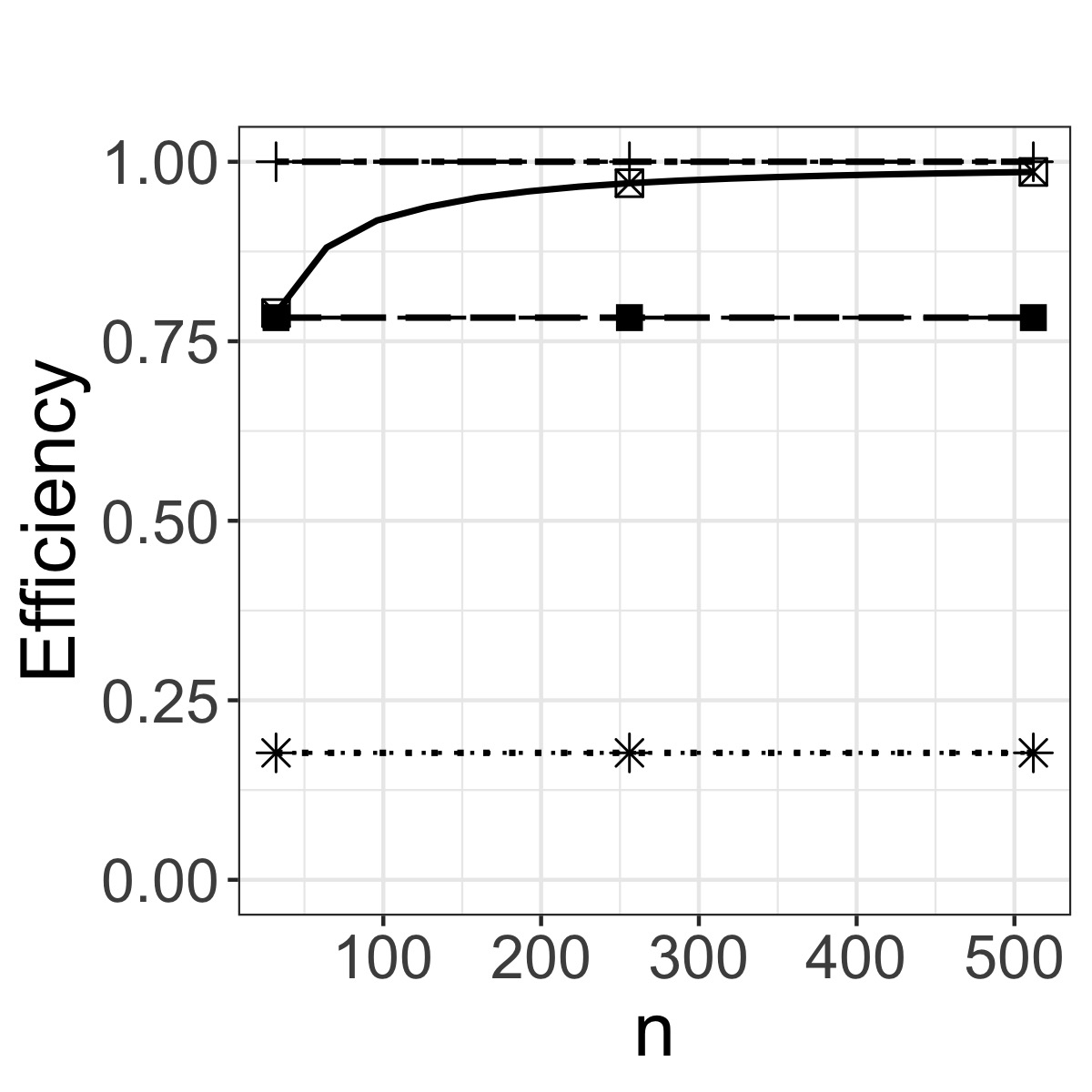}
%    \caption{$M_2$ true}
			%\label{fig:M2}
		\end{subfigure}
		\begin{subfigure}[b]{0.3\columnwidth}
			\centering
    \caption{$M_5$}
			\includegraphics[width=1.0\columnwidth]{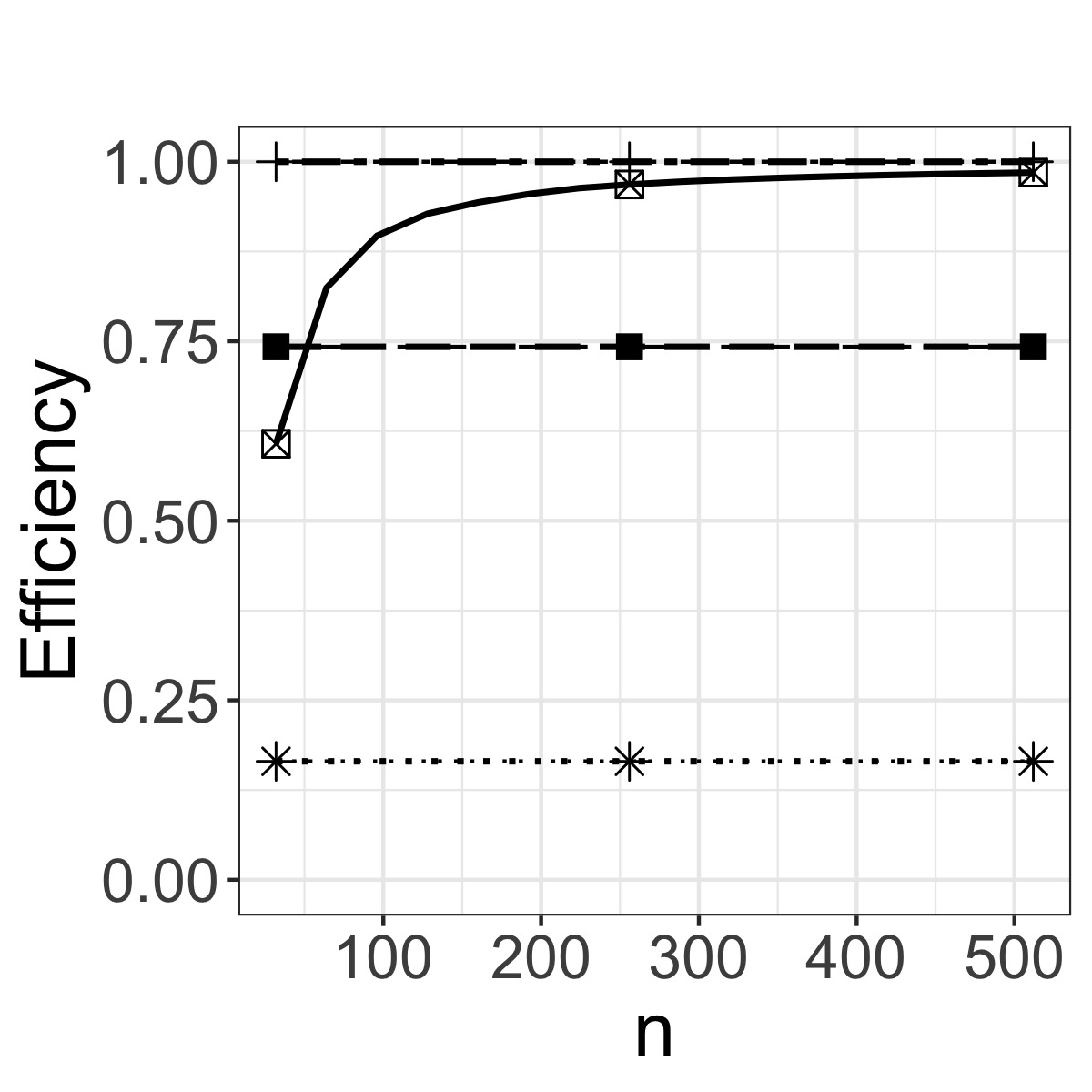}
%    \caption{$M_5$ true}
			%\label{fig:M5}
		\end{subfigure}
\caption{\label{fig:case_robust}{
  \it $D$-efficiencies \eqref{det12} for the true model 
  as the the total sample size $n$  of the sequential design is increasing.  The models are defined by \eqref{mod2}.
  Four designs are compared: sequential design constructed by Algorithm \ref{alg:vanillathompson1} ($\boxtimes$);  robust design ($+$);  uniform design with 64 supports  ($\ast$);   hybrid design ($\blacksquare$).   
  }}

 \end{figure}

   \begin{figure}[t!]
\begin{subfigure}[b]{0.3\columnwidth}
			\centering
       \caption{$M_1$}
			\includegraphics[width=1.0\columnwidth]{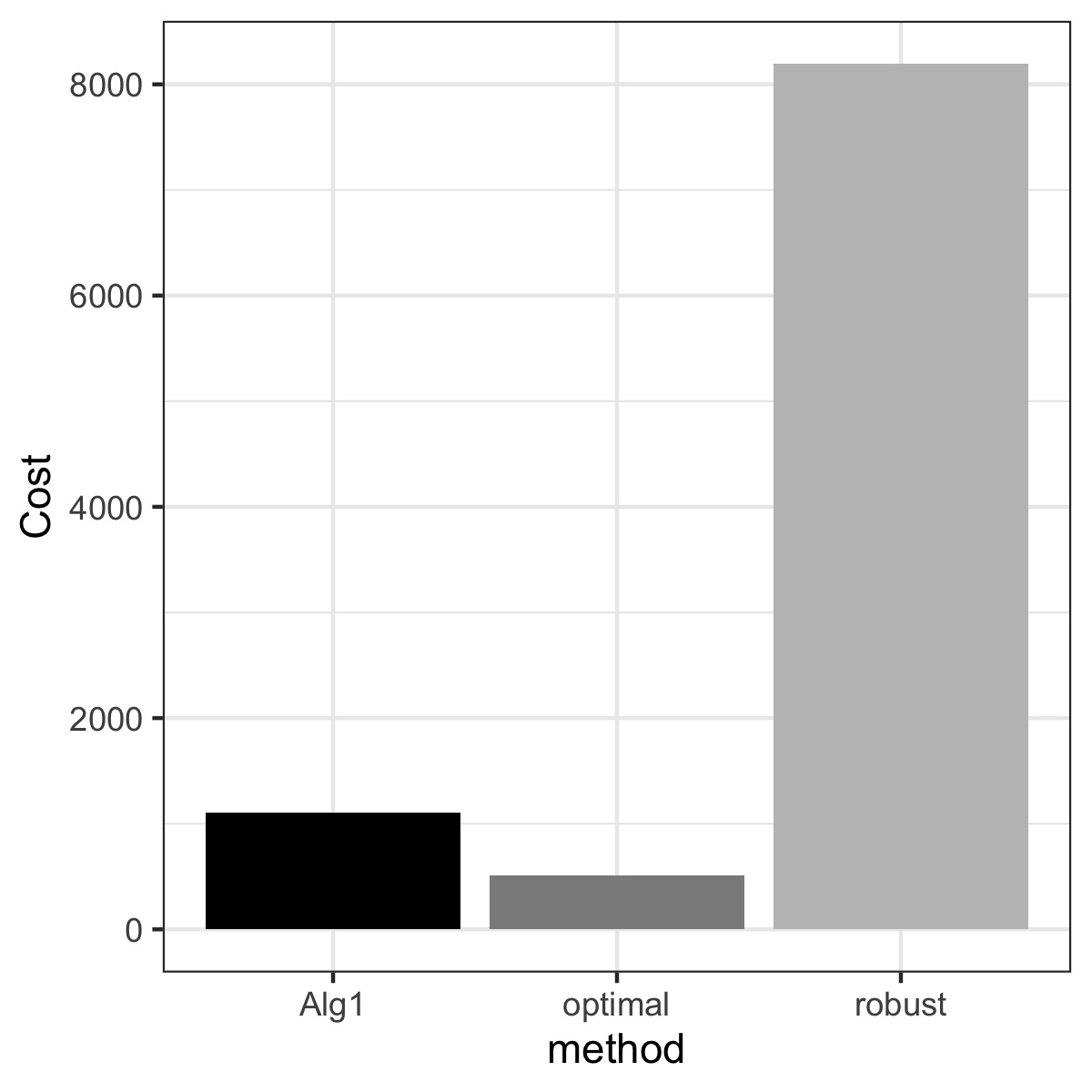}
		%	\caption{$M_1$ true}%\label{fig:M1}
		\end{subfigure}
		\begin{subfigure}[b]{0.3\columnwidth}
			\centering
       \caption{$M_2$}
			\includegraphics[width=1.0\columnwidth]{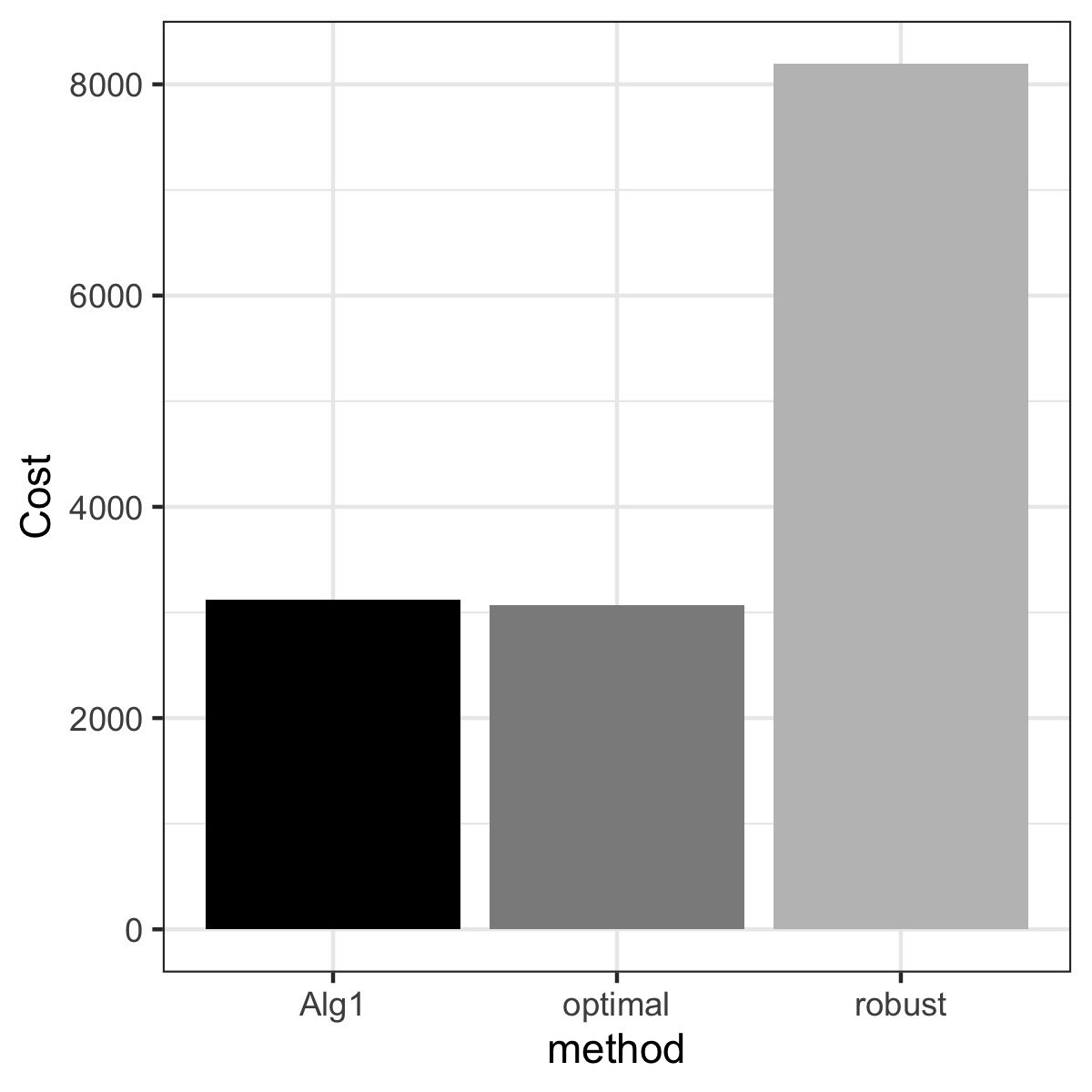}
			%\caption{$M_2$ true}%\label{fig:M2}
		\end{subfigure}
		\begin{subfigure}[b]{0.3\columnwidth}
			\centering
       \caption{$M_5$}
			\includegraphics[width=1.0\columnwidth]{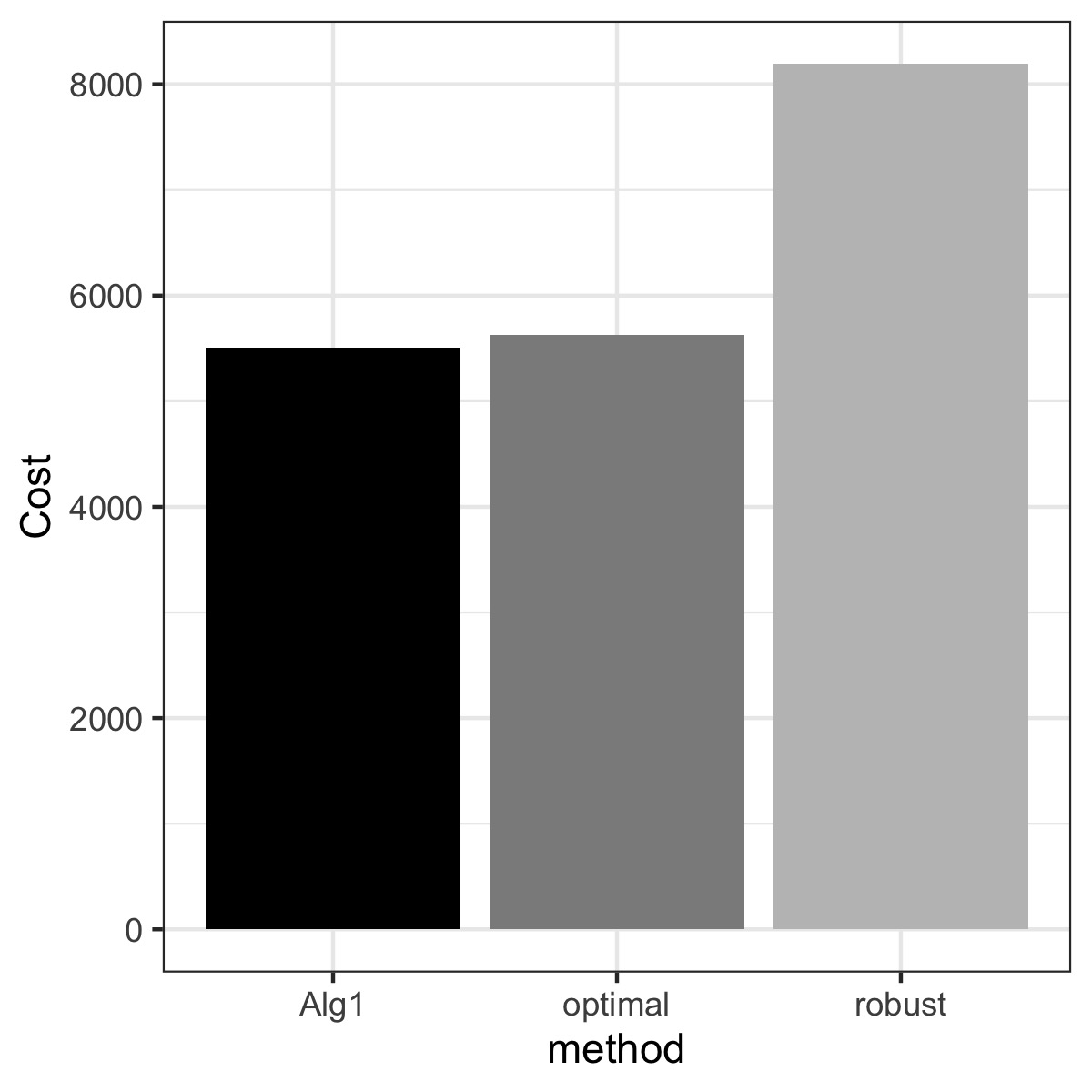}
		%	\caption{$M_5$ true}%\label{fig:M5}
		\end{subfigure}
\caption{\label{fig:case_robust-barplot-cost}{
  \it Design costs  for different methods   under $512$ experiment units when $M_1,M_2$, and $M_5$ is the true model, respectively. The models are defined by \eqref{mod2}.
  Here the results for the uniform design are omitted since the design efficiencies are not comparable with other methods.
  %(Figure  \ref{fig:exp}).
  }}

 \end{figure}

\subsection{Models with multivariate predictors}\label{multivar}

In this subsection, we will illustrate how the modified Algorithm~\ref{alg:vanillathompson1} according to Remark~\ref{remark:testset1} works.
Consider the following six linear regression with the design region ${\cal X} =[-1,1]^3$.
\begin{align}
\nonumber
		M_1 :&~~ Y=\beta_{0} +\beta_{1}x_1+ \beta_{2}x_2+\beta_{3}x_3+\varepsilon,\\
	\nonumber	M_2 :&~~ Y=\beta_{0} +\beta_{1}x_1+ \beta_{2}x_2+\beta_{3}x_3+\beta_{4}x_1x_2+\beta_{5}x_3^2+\varepsilon,\\
	\label{mod3} 
  M_3 :&~~ Y=\beta_{0}+\beta_{1}x_1+\beta_{2}x_2+\beta_{3}x_3+\beta_{4}x_2x_3+\beta_{5}x_1^2+\varepsilon,\\
 \nonumber 
 M_4 :&~~ Y=\beta_{0}+\beta_{1}x_1+\beta_{2}x_2+\beta_{3}x_3+\beta_{4}x_1x_2+\beta_{5}x_1x_3+\varepsilon,\\
 \nonumber
 M_5 :&~~ Y=\beta_{0}+\beta_{1}x_1+\beta_{2}x_2+\beta_{3}x_3+\beta_{4}x_1x_2+\beta_{5}x_2x_3+\varepsilon,\\
 \nonumber
    M_6 :&~~ Y=\beta_{0} + \beta_{1}x_1+\beta_{2}x_2 + \beta_{3}x_3+\beta_{4}x_1x_2+\beta_{5}x_2x_3+\beta_{6}x_1x_3 +\beta_{7}x_1^2+\beta_{8}x_2^2+\beta_{9}x_3^2 +\varepsilon.
	\end{align}
According to Chapter  {15.11} of \cite{pukelsheim2006optimal}, a design with  equal masses at the  points $(1,1,1),(1,1,-1),(1,-1,1),(1,-1,-1),(-1,1,1),(-1,1,-1),(-1,-1,1)$, and $(-1,-1,-1)$ is {D-optimal}  for model $M_1, M_4$, and $M_5$. 
However, with this design  the models  $M_2$ and $M_6$ are not estimable 
since the  quadratic term is the alias of the intercept term.
Note that this problem can also be easily solved by Algorithm~\ref{alg:vanillathompson1} with a hybrid design but we use the modified algorithm  here  for illustrative  purposes.

Recall that  
 ${\cal D}_t=\{(\bm x_{n_0 + \ldots +  n_{t-1}+1}, Y_{n_0 + \ldots +  n_{t-1}+1}),\ldots, (\bm x_{n_0  + \ldots + n_{t}},Y_{n_0  + \ldots + n_{t}})\}$ is the set of $n_t$ observations  taken at stage $t$. We denote by $\bm { \tilde x_{ti}} , \ldots  ,
 \bm { \tilde x_{tm}}$ the different experimental conditions in ${\cal D}_t$ and
 by  $Y_{tij}$ ($i=1, \ldots , n_{ti}$) the observations taken at each  $\bm { \tilde x_{ti}}$ ($i=1, \ldots , m$) such that $\sum_{i=1}^m n_{ti}=n_t$.
As suggested in Chapter 2.3 in \cite{mccullagh1989generalized}, we  use Pearson $\chi^2$ statistic
\begin{equation*}
    \chi^2_t=\sum_{i=1}^{m}\sum_{j=1}^{n_{ti}}(Y_{tij}-  \hat{\mu}_{i,M_k} )^2/\hat{\sigma}^2,
\end{equation*}
for testing the goodness-of-fit of the plausible model $M_k$. 
Here $\hat{\mu}_{i,M_k}$ 
 is the prediction  at the point $\bm { \tilde x_{ti}}$ using  model $M_k$  estimated 
from the data ${\cal D}_t$,
$\hat{\sigma}^2={(n_t-m)^{-1}}\sum_{i=1}^{m}\sum_{j=1}^{n_{ti}}(Y_{tij}-\bar{Y}_{ti})^2$
is an estimate of the variance 
and   $\bar{Y}_{ti}=n_i^{-1}\sum_{j=1}^{n_{ti}}Y_{tij}$.
%{\bf can we define the plausible mean?}

As in Section~\ref{ex:3}, we also compare the proposed design with a robust design which maximizes the geometric mean of the $D$-efficiencies   in model $M_1$--$M_6$, a  hybrid design  defined by $6^{-1} \sum_{k=1}^6\xi^*_{k}$ and a uniform design with $8$ supports. In this example, we set $n_t=36$ and $T=15$. The results are shown in Figure \ref{fig:gof},
where  the errors are standard normal distributed   and all parameters $\beta_j$ are set to  one. We observe that for an increasing number of stages the proposed designs are close to the optimal design under the true model in all cases under consideration. This  confirms our theoretical findings in 
 in Theorem~\ref{thm:optimal}.
For models $M_1$ and $M_5$ the sequential design already outperforms 
 its competitors for small sample sizes. On the other hand, 
%The benefits are big when the true model is far away from the widest model with the most parameters.
if  the true model is the widest model $M_6$ and $T$ is relatively small, the robust methods have larger $D$-efficiency  since at the beginning the proposed method takes some designs with zero efficiency in  true model.
This is the price we pay for model exploration.

  \begin{figure}[t!]
\begin{subfigure}[b]{0.3\columnwidth}
			\centering
   \caption{$M_1$ }
     \vspace{-.25cm}
			\includegraphics[width=1.0\columnwidth]{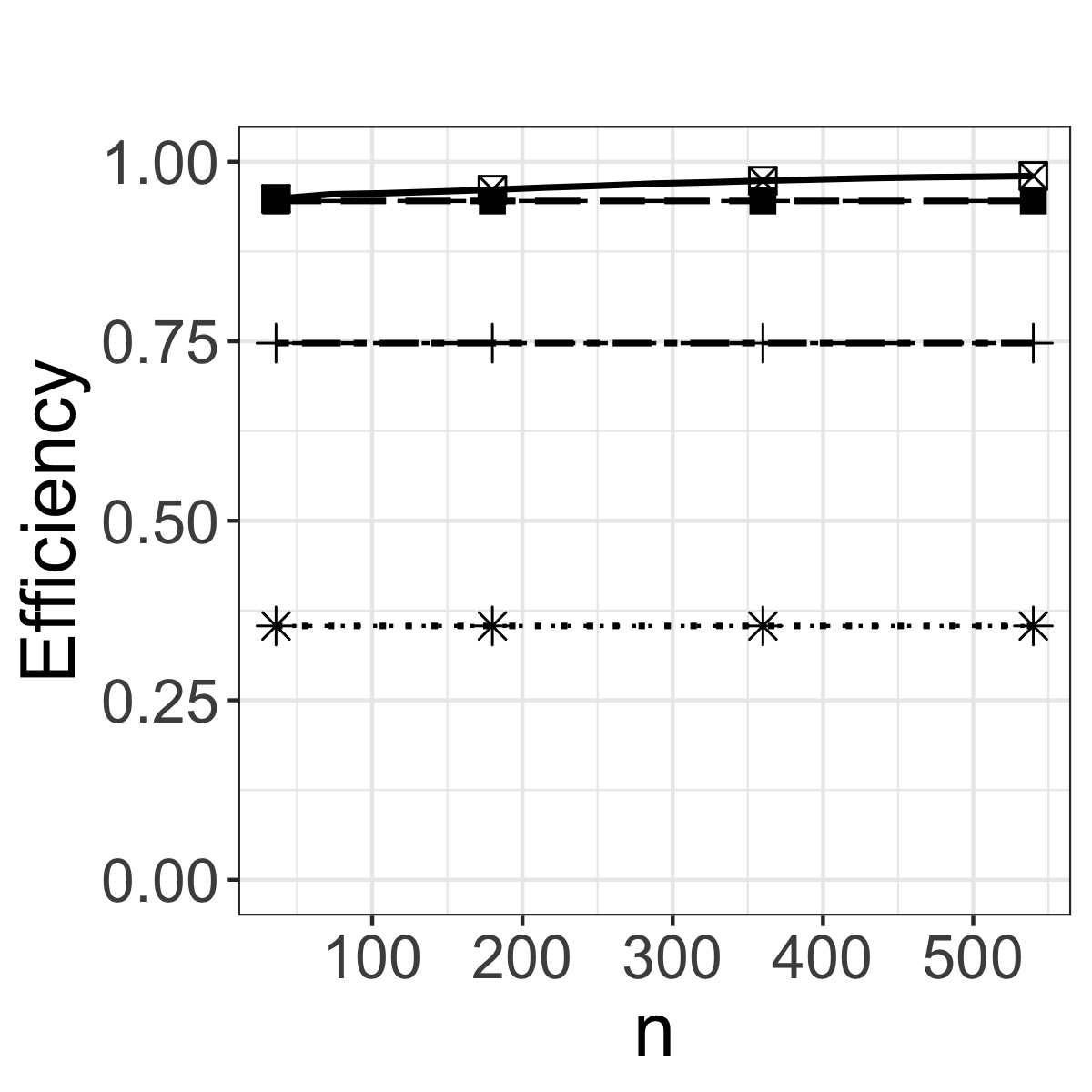}
	%		\caption{$M_1$ true}%\label{fig:M1}
		\end{subfigure}
		\begin{subfigure}[b]{0.3\columnwidth}
			\centering
\caption{$M_5$ }
    \vspace{-.25cm}
			\includegraphics[width=1.0\columnwidth]{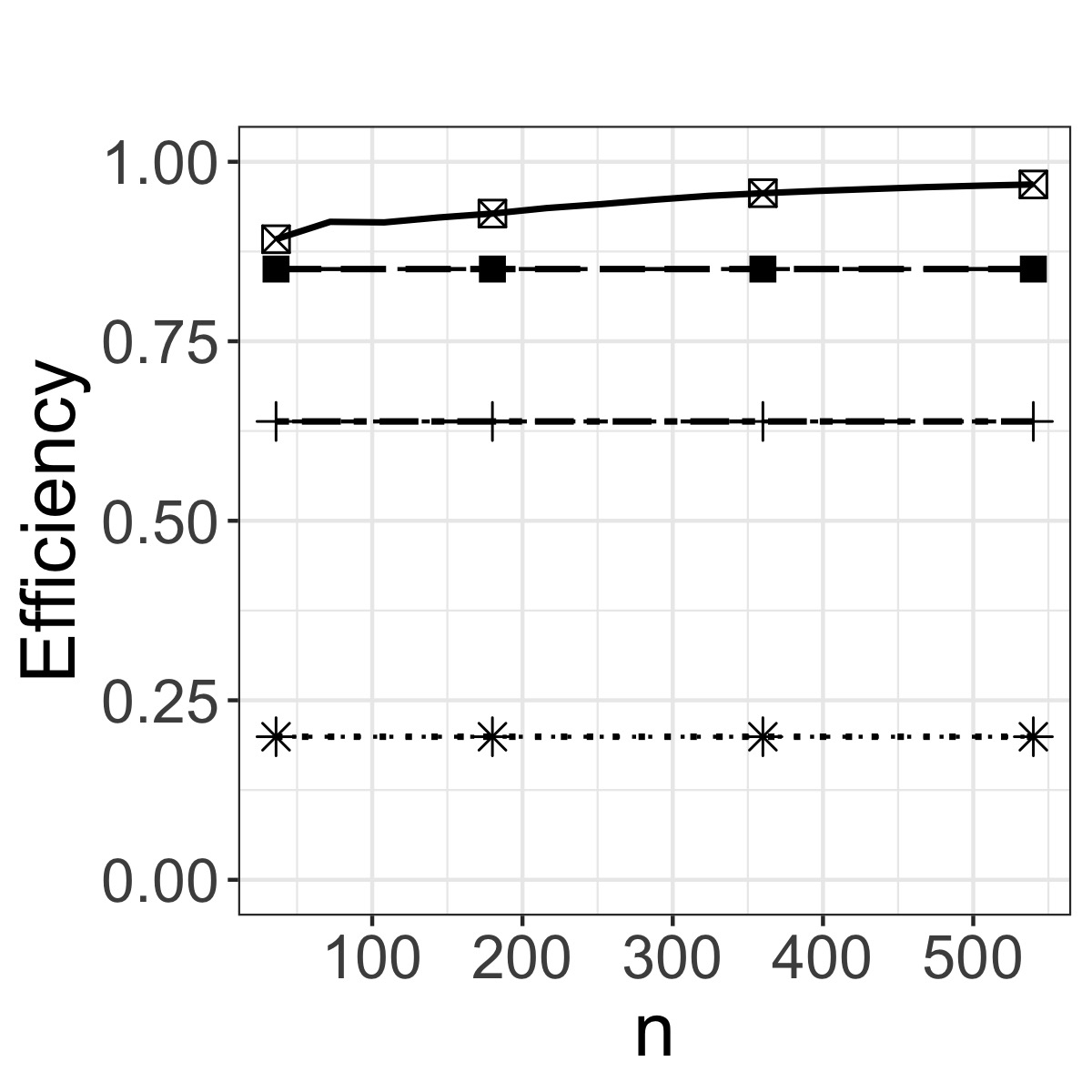}
		%	\caption{$M_5$ true}%\label{fig:M2}
		\end{subfigure}
		\begin{subfigure}[b]{0.3\columnwidth}
			\centering
   \caption{$M_6$  }
       \vspace{-.25cm}
			\includegraphics[width=1.0\columnwidth]{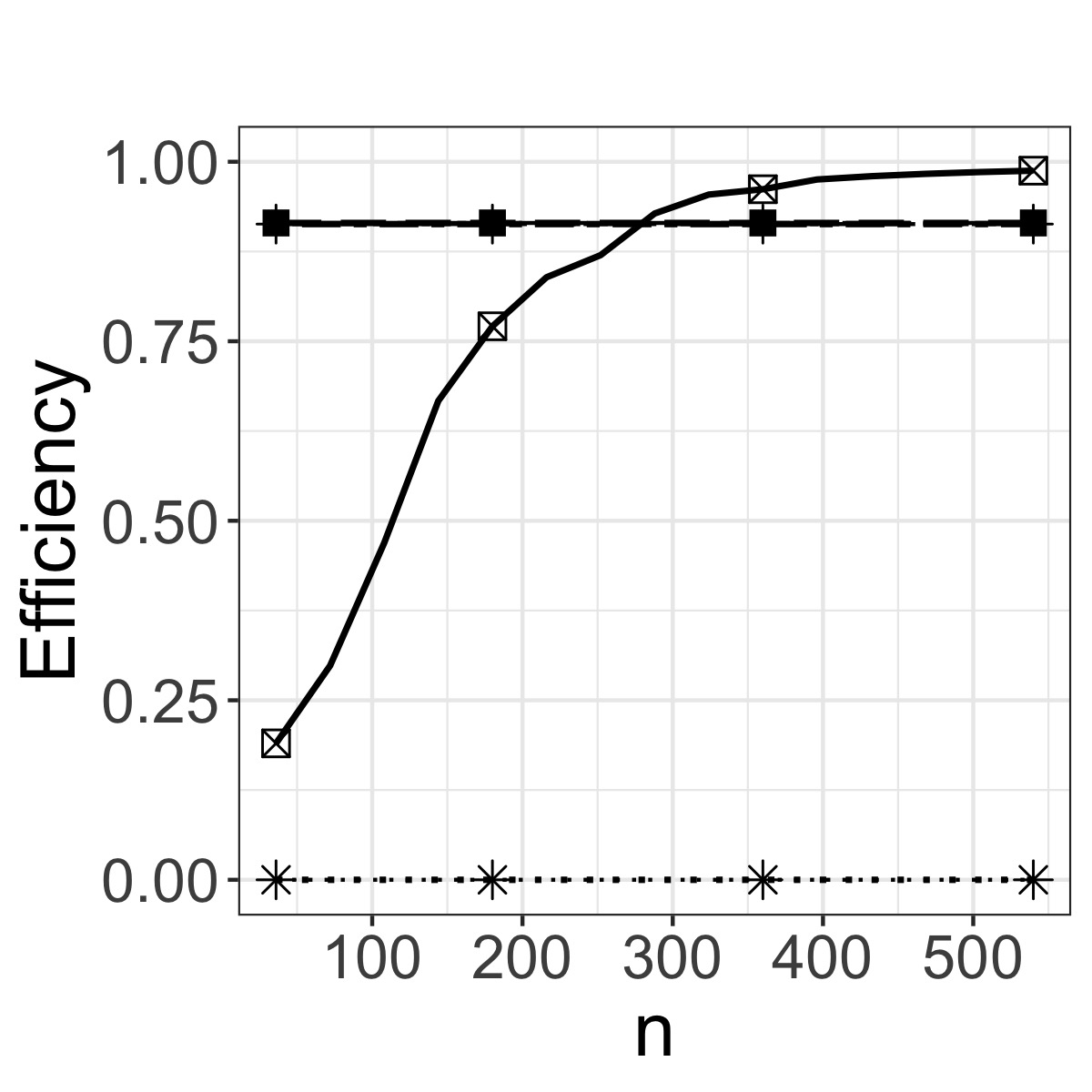}
	%		\caption{$M_6$ true}%\label{fig:M5}
		\end{subfigure}
\caption{\label{fig:gof}{
  \it $D$-efficiencies \eqref{det12} for the true model 
  as the total sample size $n$  of the sequential design is increasing. The models are defined by \eqref{mod3}.
 Four designs are compared: sequential design constructed by the modified Algorithm \ref{alg:vanillathompson1} according to Remark~\ref{remark:testset1} ($\boxtimes$);  uniform design with five supports  ($\ast$);  robust design($+$); and hybrid design ($\blacksquare$). 
 %The efficiency of robust design and hybrid design are close to each other in (c).
  }}
  \end{figure}

\section{Concluding remarks}\label{sec:conclusion}

Optimal designs are frequently   criticized for their dependence on model specification.
In practice, it is common to see that several plausible models are available to 
describe the relation between the predictors  and  experimental outputs, and the goal is  
to  estimate the parameters in the 
 most appropriate (true) model among the candidate models.  
 Consequently, design of experiments  has to take the different goals,  most efficient 
 estimation in the true (but unknown) model and 
 identifying the true model, into account.
  In this paper  we address this problem 
by  a reinforcement learning  approach  
and construct  sequential designs which achieve a specified balance between estimation and discrimination.
The key idea is to feed only a small batch of experimental units at a time and
 to collect the gains to prioritize a design that is suitable for the true model, i.e., reinforcing the  positive outcomes.
From a theoretical  side  we prove that our method provides a sequence of design with efficiencies converging  $1$ for the true model. 
Moreover, we also derive finite sample lower bounds for the expected efficiency in true model, and show that the sequential algorithm also identifies the true model with a probability converging to one.
From a practical side we demonstrate by means of a simulation study that the new sequential designs have very good finite sample  properties compared to  other well-developed design procedures for model discrimination and parameter estimation.

In this paper we concentrate on linear models and 
locally optimal  designs for non-linear models, which require some prior information about the unknown parameters. An interesting direction of future research is the extension of the 
reinforcement learning approach
 to construct sequential designs with respect to
robust optimality criteria, such as  Bayes- 
maximin-criteria  \citep[see, for example][among many others]{chaver1995,dette1997,muepaz1998}.
Moreover,  it would be also interesting to study different strategies in  the evaluation step 
for power improvement. 

\bigskip

\textbf{ Acknowledgements.} 
The authors would like to thank Birgit Tormöhlen who typed parts of this manuscript with considerable technical expertise.
The work  of H. Dette  was partially supported by the    DFG Research unit 5381 {\it Mathematical Statistics in the Information Age}, project number 460867398.  

\bibliographystyle{apalike}
\bibliography{reference}

\appendix
\section{Appendix: Technical details}
\def\theequation{A.\arabic{equation}}
	\setcounter{equation}{0}

\subsection{ Proof of the results in Section \ref{sec41} }
\begin{proof}[Proof of Theorem~\ref{thm:optimal}]
This is a direct result from Theorem~\ref{thm:risk22} (which will be shown later) by letting $T\to\infty$.
\end{proof}

\begin{proof}[Proof of Theorem~\ref{thm:selection}]	
The first part comes from the result in Lemma~\ref{lem:times} (which will be shown later).
For the second part, note the fact that $\sum_{j=1}^T N_j(T)=T$. Thus it is sufficient to show that $N_{\true}(T)\ge T/2$ with probability approaching one.
	Without loss of generality, we assume $M_\true=M_1$.
	By the Markov's inequality,  %\citep{durrett2010pte}, 
 it follows that
	\begin{align}
	\nonumber	\pr(N_1(t)<T/2)&=\pr\Big (T-\sum_{j=2}^\km N_j(t)<T/2\Big ) =\pr\Big  (\sum_{j=2}^\km N_j(t)>T/2\Big )\\
	\nonumber	&
 \le \frac{ 2}{T} \E \Big [ \sum_{t=1}^T \mathbb{I}(\mathcal{A}(t)) \Big ]  
  \le \frac{ 2}{T} (C_1\log(T)+C_2)
   \label{eq:14s} = o(1) ,
	\end{align}
 as $T \to \infty$, 
where  $C_1, C_2$ are some constants and the second inequality is a consequence of  Lemma~\ref{lem:times} (which will be shown later).
\end{proof}

\subsection{ Proof of the results in Section \ref{sec42} }
In this subsection, we will begin with dealing with the result in Theorems~\ref{thm:risk22}.
The proof of Theorem~\ref{thm:risk21} is postponed to the end of this section.

\subsubsection{Proof of Theorem 3}
Before proving  Theorem~\ref{thm:risk22}, we 
introduce some  addition notations and 
several auxiliary results. 
First, we define  $\hat{\theta}_{j}(t)=(a_j(t)-1)/(t-1)$ for $t>1$ as a ``rough estimator'' 
of the posterior probability 
$\theta_{j}$ in \eqref{det11} and define 
 $\hat{\theta}_{j}(1)=0$ for all $j$.
We further introduce the following three events
%\HD{\bf Please replace the constant $C$ by $c$ as we use this in Section \ref{sec42}!}
\begin{align}
 \mathcal{B}(t) &= \left\{\hat{\theta}_\true (t)\le  c +\frac{\alpha}{2} \text { or } \exists M_j \neq M_\true \text { s.t. } \hat{\theta}_j(t)\ge  c -\frac{\alpha}{2}\right\}, \\
 	\mathcal{C}(t) &= \left\{\hat{\theta}_\true (t)-\eta_{\true}(t)\ge \frac{\alpha}{2} \text { or } \exists M_j \neq M_\true \text { s.t. } \eta_j(t)-\hat{\theta}_j(t)\ge \frac{\alpha}{2}\right\}, \\
 \mathcal{D}(t) &=\Big \{t>\frac{8 \log(T)}{\alpha^2}+1\Big \}.
\end{align}

\begin{lemma}\label{lem:risk1}
 Under Condition~\ref{ass:stability}, it holds that
	\begin{equation} \label{det22}
		\risk_1(T) :=  \sum_{t=1}^T \E (\mathbb{I}(\mA(t)\cap\mB(t))) \le \km \Big (1+\frac{2}{\alpha^2}\Big ).
	\end{equation}
\end{lemma}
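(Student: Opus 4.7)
The plan is to discard the event $\mA(t)$ and bound
\[
\risk_1(T)\le \sum_{t=1}^T \pr(\mB(t)),
\]
since once the empirical frequencies $\hat\theta_j(t)$ concentrate around their conditional means, $\mB(t)$ itself becomes rare. The stage $t=1$ contributes at most $1$ trivially (with $\hat\theta_j(1):=0$ for all $j$, the event $\mB(1)$ holds deterministically), so the remaining task is to control $\pr(\mB(t))$ for $t\ge 2$.

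For $t\ge 2$ I would apply a union bound to split $\pr(\mB(t))$ into $\pr(\hat\theta_\true(t)\le c+\alpha/2)$ and the $\km-1$ tail terms $\pr(\hat\theta_j(t)\ge c-\alpha/2)$, $j\neq\true$. The crucial structural observation is that $\hat\theta_j(t)=(t-1)^{-1}\sum_{s=1}^{t-1}\zeta_j(s)$ is an average of conditionally Bernoulli indicators. Let $\mathcal{F}_s$ be the natural filtration generated by past selections, the observed data, and the Thompson draws used to choose $\Set_{s+1}$; then the hybrid design $\xi_{(s)}^{\rm hyb}=\rho_s\xi^*_{\Set_s}+(1-\rho_s)\xi_{\rm unif}$ is $\mathcal{F}_{s-1}$-measurable and, given $\mathcal{F}_{s-1}$, $\zeta_j(s)$ is Bernoulli with mean $\theta_j(\xi_{(s)}^{\rm hyb})$. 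Because $\rho_s=N_{\Set_s}(s)/(N_{\Set_s}(s)+1)$ always leaves $\xi_{(s)}^{\rm hyb}$ in the class covered by Condition~\ref{ass:stability}, we have $\theta_\true(\xi_{(s)}^{\rm hyb})>c+\alpha$ and $\theta_j(\xi_{(s)}^{\rm hyb})<c-\alpha$ for $j\neq\true$, uniformly in $s$ and $\omega$.

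This is the standard setup for Hoeffding--Azuma applied to the bounded martingale differences $\zeta_j(s)-\theta_j(\xi_{(s)}^{\rm hyb})\in[-1,1]$: with deviation level $\alpha/2$ we obtain, for $t\ge 2$,
\[
\pr\big(\hat\theta_\true(t)\le c+\tfrac{\alpha}{2}\big)\le e^{-(t-1)\alpha^2/2},\qquad \pr\big(\hat\theta_j(t)\ge c-\tfrac{\alpha}{2}\big)\le e^{-(t-1)\alpha^2/2},
\]
so $\pr(\mB(t))\le \km\, e^{-(t-1)\alpha^2/2}$. Summing via the integral comparison $\sum_{t=2}^\infty e^{-(t-1)\alpha^2/2}\le\int_0^\infty e^{-x\alpha^2/2}\,dx=2/\alpha^2$ yields
\[
\risk_1(T)\le 1+\frac{2\km}{\alpha^2}\le \km\Big(1+\frac{2}{\alpha^2}\Big),
\]
which is the claimed bound.

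The main technical subtlety is the measurability setup: one has to check that $\xi_{(s)}^{\rm hyb}$ is determined before $\mathcal{D}_s$ is observed and that, given this design, $\zeta_j(s)$ depends on the past only through $\mathcal{D}_s$. This is precisely where Algorithm~\ref{alg:vanillathompson1}'s choice to score using only the current-stage data pays off: the conditional mean of $\zeta_j(s)$ is exactly $\theta_j(\xi_{(s)}^{\rm hyb})$, so the uniform bounds from Condition~\ref{ass:stability} apply term-by-term and the martingale argument goes through without any need to handle stage-to-stage dependence explicitly.
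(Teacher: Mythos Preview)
Your proof is correct and follows essentially the same route as the paper: drop $\mA(t)$, apply a union bound to $\mB(t)$, use a Hoeffding-type concentration for each $\hat\theta_j(t)$ exploiting the uniform separation in Condition~\ref{ass:stability}, and sum the resulting geometric series. Your explicit filtration/Azuma argument is in fact slightly more careful than the paper's bare invocation of ``Hoeffding's inequality'' (the $\zeta_j(s)$ are not independent across $s$, only conditionally Bernoulli given $\mathcal{F}_{s-1}$), but the bounds obtained are identical and your intermediate estimate $1+2\km/\alpha^2$ is even marginally sharper before you relax it to the stated $\km(1+2/\alpha^2)$.
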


\begin{proof}[Proof of Lemma~\ref{lem:risk1}]
	 The  risk $\risk_1(T)$ can be decomposed as
	%\HD{\bf what is $\hat{\theta}$ here ? }
 \begin{equation}\label{eq:20}
		\risk_1(T)\le\E\Big  [ \sum_{t=1}^T \mathbb{I}\left(\hat{\theta}_{\true}(t) \le {c} + \frac{\alpha}{2} \right)\Big  ]  +\sum_{M_j\neq M_\true}\E\Big [ \sum_{t=1}^T \mathbb{I}\left(\hat{\theta}_{j}(t) \ge {c} - \frac{\alpha}{2}\right) \Big ]  
	\end{equation}
 and for the sake of brevity, we only deal with the first term showing   that
	\begin{equation}\label{eq:term11}
		\E\Big  [ \sum_{t=1}^T \mathbb{I}\left(\hat{\theta}_{\true}(t) \le  c  + \frac{\alpha}{2} \right)\Big  ]  \leq 1 + \frac{2}{\alpha^2}.
	\end{equation}
	The inequality for the second  term in \eqref{eq:20} can be obtained analogously. 
	With the definition of $\theta_j(\xi_{t}^{\rm hyb})$ we denote by 
\begin{align*}
    \bar\theta_j(t)&=\frac{1}{t-1}\sum_{k=1}^{t-1} \theta_j(\xi_{(k)}^{\rm hyb})=\frac{1}{t-1}\sum_{k=1}^{t-1} \theta_j(\rho_k\xi_{\Set_k}^*+(1-\rho_k)\xi_{\rm unif})\\
    &= \frac{1}{t-1}\sum_{k=1}^{t-1}\E [ \zeta_j(t) ] =\frac{1}{t-1}\E [ a_j(t)-1 ] =\E [\hat{\theta}_{j}(t)] 
\end{align*}
the expectation of $\hat{\theta} (t)$
 for  {$t=2,\ldots,T$}.
Simple calculations and Condition~\ref{ass:stability} yield that 
(note that  $\hat{\theta}_\true (1) =0$ and  
 $\bar\theta_{\true}(t) > c+\alpha$  for all $t>1$) {%{ \bf check rewriting!} 
	\begin{align*}
		\E\Big [ \sum_{t=1}^T \mathbb{I}(
  \hat{\theta}_{\true}(t) \le c +\alpha/2)\Big ] 
		=& 1 + \sum_{t = 2}^T \pr\left(\hat{\theta}_{{\true}}(t) - 	\bar\theta_{{\true}}(t)\le c+\alpha/2- \bar\theta_\true (t) \right)\\
		\leq& 1 + \sum_{t = 2}^T \pr\left(\hat{\theta}_{{\true}}(t) - 	\bar\theta_{{\true}} (t)\le -\alpha/2\right) \\
  	\leq &  1+ \sum_{t = 1}^T \exp\left(-t\alpha^2/2\right)
		\leq 1+ \sum_{t = 1}^\infty \exp\left(-t\alpha^2/2\right)\\
		\leq  & 1 + \frac{\exp\left(- \alpha^2/2\right)}{1-\exp\left(- \alpha^2/2\right)}
		< 1+ \frac{2}{\alpha^2},
	\end{align*} 
 where we have used Hoeffding's  inequality  and  the last inequality comes from the fact that $1/(\exp(x)-1)<1/x$.}
	This yields \eqref{eq:term11}. The second term can be bounded analogously, which concludes the proof of this Lemma. 
\end{proof}

\begin{lemma} \label{lem:4} 
	 The following bound holds when the Algorithm~\ref{alg:vanillathompson1} is applied.
\begin{equation}
   \label{det25} 
	\risk_2(T):=\sum_{t=1}^T \E\{\mathbb{I}(\mathcal{A}(t)\cap \mathcal{B}^c(t)\cap\mathcal{C}(t)\cap \mathcal D(t))\}\leq  \km.
	\end{equation}
\end{lemma}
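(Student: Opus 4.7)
\textbf{Proof approach for Lemma \ref{lem:4}.}

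My plan is to discard the events $\mathcal{A}(t)$ and $\mathcal{B}^c(t)$, which only tightens the bound, and work entirely with $\sum_{t=1}^{T}\pr(\mathcal{C}(t)\cap\mathcal{D}(t))$. By the definition of $\mathcal{D}(t)$ the sum is effectively restricted to indices $t$ that are strictly larger than $8\log(T)/\alpha^{2}+1$, so I only need a concentration bound summand-by-summand on this tail. Splitting $\mathcal{C}(t)$ via a union bound, I would condition on the history $\mathcal{F}_{t-1}$; under this conditioning $a_j(t), b_j(t)$, hence $\hat\theta_j(t)$, are deterministic, while $\eta_j(t)\sim\mathrm{Beta}(a_j(t),b_j(t))$. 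Two identities built into the algorithm are essential: $a_j(t)+b_j(t)-1=t$ and $a_j(t)-1=(t-1)\hat\theta_j(t)$, which tie the Beta random variable to the "rough estimator'' used inside $\mathcal{C}(t)$.

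The engine of the proof is the classical Beta–Binomial identity for integer shape parameters,
$$\pr(\mathrm{Beta}(a,b)\ge y)=\pr(\mathrm{Bin}(a+b-1,y)\le a-1),$$
together with its mirror version for the lower tail. Applied to $\{\eta_j(t)\ge\hat\theta_j(t)+\alpha/2\}$ with $j\ne\true$, the probability rewrites as $\pr(\mathrm{Bin}(t,\hat\theta_j(t)+\alpha/2)\le (t-1)\hat\theta_j(t))$, i.e.\ a lower deviation of the binomial from its mean by at least $\hat\theta_j(t)+t\alpha/2\ge t\alpha/2$. Hoeffding's inequality then yields
$$\pr(\eta_j(t)\ge\hat\theta_j(t)+\alpha/2 \mid\mathcal{F}_{t-1})\le\exp(-t\alpha^{2}/2),$$
and an entirely analogous computation handles $\{\hat\theta_\true(t)-\eta_\true(t)\ge\alpha/2\}$, where the extra "$+1$'' appearing in $a_\true(t)=(t-1)\hat\theta_\true(t)+1$ only improves the deviation.

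Taking expectations and applying the union bound over the $\km$ candidate models gives $\pr(\mathcal{C}(t))\le \km\,\exp(-t\alpha^{2}/2)$. Summing the resulting geometric tail starting from $t>8\log(T)/\alpha^{2}+1$ produces
$$\risk_2(T)\le \km\sum_{t> 8\log(T)/\alpha^{2}+1}\exp(-t\alpha^{2}/2)\le \km\cdot\frac{T^{-4}\,e^{-\alpha^{2}/2}}{1-e^{-\alpha^{2}/2}},$$
and a short calculation using $1-e^{-x}\ge x/2$ for $x\in(0,1)$ shows that the right-hand side is bounded by $\km$. Note that the choice of the cut-off $8\log(T)/\alpha^{2}+1$ in $\mathcal{D}(t)$ is precisely what gives the $T^{-4}$ factor and thus tames the constant.

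The step I expect to require the most care is the Hoeffding step, because the Beta mean $a_j(t)/(t+1)$ and the rough estimator $\hat\theta_j(t)=(a_j(t)-1)/(t-1)$ differ by an $O(1/t)$ amount, so a naive concentration-around-the-mean argument does not literally give concentration around $\hat\theta_j(t)$. Routing through the Beta–Binomial identity avoids this nuisance entirely, since the binomial threshold $(t-1)\hat\theta_j(t)$ is $t\hat\theta_j(t)$ offset by exactly $-\hat\theta_j(t)$, and this small offset acts in our favor (it enlarges the deviation from $t\alpha/2$ to $t\alpha/2+\hat\theta_j(t)$). Once this is observed, the remainder is a bookkeeping exercise.
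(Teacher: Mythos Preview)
Your argument is correct. Both you and the paper discard $\mathcal{A}(t)$ and $\mathcal{B}^c(t)$ and use $\mathcal{D}(t)$ to force a large-deviation regime for the Beta samples around the mode $\hat\theta_j(t)=(a_j(t)-1)/(a_j(t)+b_j(t)-2)$. The difference is in how the Beta tail is controlled. The paper observes that on $\mathcal{D}(t)$ one has $\alpha/2>\sqrt{2\log(T)/(t-1)}$, embeds $\mathcal{C}(t)\cap\mathcal{D}(t)$ into the event $\{\,|\eta_j(t)-\hat\theta_j(t)|\ge\sqrt{2\log(T)/(t-1)}\text{ for some }j\,\}$, and then invokes Lemma~4 of \cite{wang2018th} as a black box to conclude $\sum_t\pr(\cdot)\le\km$. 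You instead keep the threshold $\alpha/2$, convert the Beta tail into a Binomial tail via the order-statistics identity $\pr(\mathrm{Beta}(a,b)\ge y)=\pr(\mathrm{Bin}(a+b-1,y)\le a-1)$, and apply Hoeffding directly to obtain $\pr(\mathcal{C}(t)\mid\mathcal{F}_{t-1})\le\km\exp(-t\alpha^{2}/2)$, after which the geometric sum restricted to $t>8\log(T)/\alpha^{2}+1$ is easily bounded by $\km$ (indeed, whenever the sum is nonempty the constraint forces $T^{4}\alpha^{2}$ to be large). Your route is more self-contained and makes transparent exactly why $\hat\theta_j(t)$ being the \emph{mode} rather than the mean causes no trouble; the paper's route is shorter on the page but relies on an external reference. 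Both arrive at the same bound with the same dependence on $\km$.
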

\begin{proof}[Proof of Lemma~\ref{lem:4}]
	For the indices in the set  $\mathcal D(t)$ we have $t-1> {8\log(T)}/{\alpha ^2}$, which implies
	${\alpha}/{2}> \big ({2\log(T)}/({t-1}) \big )^{1/2}$. %Also note that $N_j(t)\le \npull^{(j)}(t)$.  
 The event $\mathcal{B}^c(t)\cap \mathcal{C}(t)\cap \mathcal D(t)\subseteq \mathcal{C}(t)\cap \mathcal D(t)$ is 
 then a subset of  
	$$
	\Big \{\exists M_j \neq M_\true : \eta_j(t) - \hat{\theta}_j(t) \ge \Big ({\frac{2\log(T)}{{t-1}}}\Big  )^{1/2}~\text{or}~ ~~ \hat{\theta}_{\true}(t)- \eta_{\true}(t) \ge \Big ({\frac{2\log(T)}{{t-1}}}\Big )^{1/2} \Big \}. 
	$$  
	As  $\hat{\theta}_{j}(t)=(a_j(t)-1)/(t-1)=(a_j(t)-1)/(a_j(t)+b_j(t)-2)$  is the mode of a random variable sampled from   a $\mathrm{Beta}(a_j(t),b_j(t))$ distribution,  we can use 
 Lemma 4 in \cite{wang2018th}
 to obtain the bound
	\begin{align*}
		\sum_{t=1}^T \pr\left(\mathcal{A}(t) \cap \mathcal{B}^c(t)\cap \mathcal{C}(t)\cap \mathcal{D}(t)\right)&\leq \sum_{t=1}^T \pr\left( \mathcal{B}^c(t)\cap \mathcal{C}(t)\cap\mathcal{D}(t)\right) \leq 
  \km ,
	\end{align*}
which gives the desired result.
\end{proof}

Our next result provides a bound on the expected number of times that the event $\mA(t)=\{M_{\Set_t} \neq M_\true\}$ happens for $t=1,\ldots,T$.
\begin{lemma}\label{lem:times}
	 If Condition~\ref{ass:stability} is satisfied  
 and all models are estimable under the design selected by Algorithm~\ref{alg:vanillathompson1}, we have
	\begin{equation}\label{eq:16}
		\E \Big [ \sum_{t=1}^T \mathbb{I}\left(\mathcal{A}(t)\right) \Big ] 
  \le \km\Big (2+\frac{2}{\alpha^2}\Big )+\frac{8\log(T)}{\alpha^2}.
	\end{equation}
\end{lemma}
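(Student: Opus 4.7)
\medskip

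\noindent\textbf{Proof plan for Lemma~\ref{lem:times}.}

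The plan is to decompose the event $\mathcal{A}(t)$ using the three auxiliary events $\mathcal{B}(t)$, $\mathcal{C}(t)$, $\mathcal{D}(t)$ introduced above, so that each summand on the right of \eqref{eq:16} corresponds to a distinct failure mode. Concretely, I will write
\[
\mathbb{I}(\mathcal{A}(t)) = \mathbb{I}(\mathcal{A}(t)\cap \mathcal{B}(t)) + \mathbb{I}(\mathcal{A}(t)\cap \mathcal{B}^c(t)\cap \mathcal{C}(t)\cap \mathcal{D}(t)) + \mathbb{I}(\mathcal{A}(t)\cap \mathcal{B}^c(t)\cap \mathcal{C}(t)\cap \mathcal{D}^c(t)) + \mathbb{I}(\mathcal{A}(t)\cap \mathcal{B}^c(t)\cap \mathcal{C}^c(t)),
\]
and then bound each of the four terms separately after summing over $t=1,\dots,T$.

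The key deterministic observation (which I expect to be the main obstacle to verify cleanly) is that the last piece vanishes: $\mathcal{A}(t)\cap \mathcal{B}^c(t)\cap \mathcal{C}^c(t)=\emptyset$. Indeed, on $\mathcal{B}^c(t)$ Condition~\ref{ass:stability} together with the definition of $\mathcal{B}(t)$ gives $\hat{\theta}_{\true}(t)>c+\alpha/2$ and $\hat{\theta}_j(t)<c-\alpha/2$ for every $j\neq\true$; then on $\mathcal{C}^c(t)$ we also have $\eta_{\true}(t)>\hat{\theta}_{\true}(t)-\alpha/2>c$ and $\eta_j(t)<\hat{\theta}_j(t)+\alpha/2<c$ for every $j\neq\true$. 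Consequently $\eta_{\true}(t)>\eta_j(t)$ for all $j\neq\true$, so the Thompson sampling selection step returns $\Set_t=\true$, contradicting $\mathcal{A}(t)$. I expect the careful handling here to be the most delicate part because all three inequality directions in the definitions of $\mathcal{B}(t)$ and $\mathcal{C}(t)$ have to line up correctly.

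For the three remaining terms I will invoke the already-established bounds: Lemma~\ref{lem:risk1} gives $\sum_{t=1}^T \E[\mathbb{I}(\mathcal{A}(t)\cap \mathcal{B}(t))] \le \km(1+2/\alpha^2)$, and Lemma~\ref{lem:4} gives $\sum_{t=1}^T \E[\mathbb{I}(\mathcal{A}(t)\cap \mathcal{B}^c(t)\cap \mathcal{C}(t)\cap \mathcal{D}(t))] \le \km$. Finally, $\mathbb{I}(\mathcal{D}^c(t))$ is nonzero only for $t\le 8\log(T)/\alpha^2+1$, so the third term contributes at most $8\log(T)/\alpha^2+1$ deterministically (the stray $+1$ can be absorbed into the $\km$ term since $\km\ge 1$). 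Adding these three contributions yields
\[
\E\Big[\sum_{t=1}^T\mathbb{I}(\mathcal{A}(t))\Big] \le \km\Big(1+\frac{2}{\alpha^2}\Big) + \km + \frac{8\log(T)}{\alpha^2} + 1 \le \km\Big(2+\frac{2}{\alpha^2}\Big) + \frac{8\log(T)}{\alpha^2},
\]
which is the stated bound. The estimability hypothesis is used implicitly to guarantee that the posterior $\theta_j(\xi^{\rm hyb}_{(t)})$ appearing in the definition of $\hat{\theta}_j(t)$ is well-defined at every stage, so that Lemmas~\ref{lem:risk1} and \ref{lem:4} actually apply.
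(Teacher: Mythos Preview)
Your decomposition and the handling of each piece coincide exactly with the paper's proof: split $\mathcal{A}(t)$ via $\mathcal{B}(t)$, then split the $\mathcal{B}^c(t)\cap\mathcal{C}(t)$ part via $\mathcal{D}(t)$, invoke Lemmas~\ref{lem:risk1} and~\ref{lem:4}, and observe that $\mathcal{B}^c(t)\cap\mathcal{C}^c(t)$ forces $\eta_{\true}(t)>c>\eta_j(t)$ for all $j\neq\true$, so the last piece is empty. One arithmetic slip: your final inequality $\km(1+2/\alpha^2)+\km+8\log(T)/\alpha^2+1\le \km(2+2/\alpha^2)+8\log(T)/\alpha^2$ is off by $1$ and cannot be fixed by ``$\km\ge1$''; the clean way to drop the stray $+1$ is to note that at $t=1$ one has $\hat\theta_{\true}(1)=0\le c+\alpha/2$, so $\mathcal{B}(1)$ holds and the $\mathcal{D}^c$-term contributes only for $t\in\{2,\dots,\lfloor 8\log(T)/\alpha^2+1\rfloor\}$, i.e.\ at most $8\log(T)/\alpha^2$ times.
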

\begin{proof}[Proof of Lemma~\ref{lem:times}]
		We use the decomposition
	\begin{align}
\mathbb{I}\left(\mathcal{A}(t)\right) 
		& =   \mathbb{I}\left(\mathcal{A}(t) \cap\mathcal{B}(t)\right) +  \mathbb{I}\left(\mathcal{A}(t) \cap\mathcal{B}^c(t)\cap \mathcal{C}(t) \right) + \mathbb{I}\left(\mathcal{A}(t) \cap\mathcal{B}^c(t)\cap \mathcal{C}^c(t)\right). \label{eq:event-decomp}
	\end{align}
 and derive bounds  for the  three terms separately.  
 
 The first term has already been considered in Lemma~\ref{lem:risk1}. For the second
term  we use a  further decomposition
	\begin{align*}
		 \mathbb{I}\left(\mathcal{A}(t) \cap\mathcal{B}^c(t)\cap \mathcal{C}(t) \right) &=	
		  \mathbb{I}\left(\mathcal{A}(t) \cap\mathcal{B}^c(t)\cap \mathcal{C}(t) \cap  \mathcal{D}(t)  \right) 
		  + \mathbb{I}\left(\mathcal{A}(t) \cap\mathcal{B}^c(t)\cap \mathcal{C}(t)\cap  \mathcal{D}^c(t)  \right).
	\end{align*}
 To evaluate the second term of the above equality, note that the worst scenario
 appears in the case where  the designs selected in the first   %\HD{{\bf please check rewriting!!} 
 $t \leq  8\log(T)/\alpha^2$ stages are all incorrect, which implies for the corresponding  risk 
 $$
 \E \Big [ \sum_{t=1}^T 
 \mathbb{I}\left(\mathcal{A}(t) \cap\mathcal{B}^c(t)\cap \mathcal{C}(t)\cap  \mathcal{D}^c(t)  \right)
 \Big ]  \leq 8\log(T)/\alpha^2. 
 $$
Now, an application of 
Lemma~\ref{lem:4} gives 
 \begin{align*}
  \E \Big [ \sum_{t=1}^T 
\mathbb{I}\left(\mathcal{A}(t) \cap\mathcal{B}^c(t)\cap \mathcal{C}(t) \right) 
 \Big ]  \leq \km+8\log(T)/\alpha^2 
 \end{align*}
Finally, we deal with the last term in \eqref{eq:event-decomp}.
Note that 
\[
   \mathcal{B}^c(t)\cap \mathcal{C}^c(t)
   {=} \big \{ {\eta_{\true} } (t) >c~\text{and}~\forall M_j \neq  M_\true \,\,\,\, {\eta_j(t)}  < c \big \},
\]
which implies that ${\eta_{\true} }(t)$ corresponds to the largest value among all $\{\eta_j(t)\}_{j=1}^\km$ at stage $t$. In other words Algorithm~\ref{alg:vanillathompson1}, selects the optimal design under the true model, which means that  $\mathcal{A}^c(t)$ happens at stage $t$.  
Thus, $\mathbb{I}(\mathcal{A}(t) \cap\mathcal{B}^c(t)\cap \mathcal{C}^c(t))=0$, and  the desired result follows by 
combining  the estimates for the  three terms in \eqref{eq:event-decomp}.
\end{proof}

\paragraph{Proof of Theorem \ref{thm:risk22}.}
Recall that the number of runs in each stage is  the same, which implies $n_t/n=1/T$ and that $\rho_t=N_{\Set_{t}}(t)/(N_{\Set_{t}}(t)+1)$,  where  ${\Set_{t}}$  denotes the index of the model selected at stage $t$
and  $N_j(t)$ is the number of times that design $\xi_j^*$ was  chosen  in  the first  $t$ stages.
Using  the concavity of the criterion function 
$\phi_{\true}(\cdot)$ yields  
 \begin{align*}
     \Eff_{\true}(\xi_{\ms,T})&=\frac{\phi_\true(\xi_{\ms,T})}{\phi_\true(\xi_{\true}^*)}
     =\frac{\phi_\true(\sum_{t=1}^T 
     {T}^{-1}
     \{\rho_t\xi^*_{\Set_t}+(1-\rho_t)\xi_{\rm unif}\})}{\phi_\true(\xi_{\true}^*)}\\
     &\ge \frac{\frac{1}{T}\sum_{t=1}^T  \rho_t\phi_\true(\xi^*_{\Set_t})}{\phi_\true(\xi_{\true}^*)}
     \ge
     \frac{1}{T}\sum_{t=1}^T  \rho_t\mathbb{I}\left(\mathcal{A}^c(t)\right)\\
     &=\frac{1}{T}\sum_{t=1}^T  \left(1-\frac{1}{N_{\true}(t)+1}\right)\mathbb{I}\left(\mathcal{A}^c(t)\right)
     \ge\frac{1}{T}\sum_{t=1}^T  \mathbb{I}\left(\mathcal{A}^c(t)\right)-\frac{1}{T} \sum_{t=1}^T  \frac{1}{t}\\
     & \ge\frac{1}{T}\sum_{t=1}^T  \left(1-\mathbb{I}\left(\mathcal{A}(t)\right)\right) {-} \frac{1}{T}\log(T),
 \end{align*}
where the second last third  inequality follows  from the fact that $1/2+1/3+\cdots+1/(N_{\true}(T)+1)\le 1+1/2+\cdots+1/T$, since $N_{\true}(T)\le T$.
	% Note that $\risk(\xi_{{\ms}})$ can be bounded by $\sum_{t=1}^T  \mathbb{I}\left(\mathcal{A}(t)\right)$ since the relative efficiency is bounded by one.
	Thus the result follows immediately from  Lemma~\ref{lem:times}.

\subsubsection{Proof of Theorem \ref{thm:risk21}}

Note that in the situation considered in 
Theorem \ref{thm:risk21}
only a part of the models can be examined at stage $t$. To obtain the parallel results of Lemmas~\ref{lem:risk1}--\ref{lem:times}, we denote by  $\npull^{(j)}(t):=a_j(t)+b_j(t)-2$ $ (j=1,\ldots,\km)$ the number of times that the $j$th model has been checked, and define  $\hat{\theta}_j (t)=(a_j(t)-1)/(a_j(t)+b_j(t)-2)$ for $j=1,\ldots,\km$ (note that in the situation considered in Theorem \ref{thm:risk22} we have $\hat{\theta}_j (t)=t-1$).
Recall that 
\begin{align*}
\mathcal{A}(t) &= \{M_{\Set_t} \neq M_\true\}  = 
\left\{\xi_{(t)}^{\rm hyb}\neq \rho_t\xi^*_{\true}+(1-\rho_t)\xi_{\rm unif}\right\},\\
 	\mathcal{C}(t) &= \left\{\hat{\theta}_\true (t)-\eta_{\true}(t)\ge \frac{\alpha}{2} \text { or } \exists M_j \neq M_\true \text { s.t. } \eta_j(t)-\hat{\theta}_j(t)\ge \frac{\alpha}{2}\right\},
\end{align*}
and redefine the events
\begin{align*}
   \mathcal{B}(t)= & \Big \{ M_\true \text { can be checked under design } \xi_{(t)}^{\rm hyb}, ~~\hat{\theta}_\true (t)\le  c +\frac{\alpha}{2}\Big \}  \\
   & \bigcup \Big \{ ~\exists M_j \text { can be checked under design } \xi_{(t)}^{\rm hyb},  \text{ and } M_j \neq M_\true \text { s.t. } \hat{\theta}_j(t)\ge  c -\frac{\alpha}{2}\Big \}, \\
   {\mathcal{D}}(t)=&\bigcap_{j\in\{1,\ldots,k\}}\Big \{\npull^{(j)}(t)>\frac{8 \log(T)}{\alpha^2}\Big  \}
\end{align*}
Note that $\npull^{(j)}(t)= {t-1}$ and the models can always be checked for the case we studied in Lemma~\ref{lem:risk1}. Therefore  we still use  notations  
$\mathcal{B}(t)$ and ${\mathcal{D}}(t)$ for the ease of presentation.
According to the construction of $\xi_{(t)}^{\rm hyb}$, one can expect that  the model $M_{\Set_t}$ can be checked for $t=1,\ldots,T$, since $M_{\Set_t}$ is estimable under $\xi_{\Set_t}^*$ and a uniform design provide additional design points for model checking. On the other hand, if a model $M_j$ can not be checked at stage $t$, we have $\xi_{(t)}^{\rm hyb}\neq \rho_t\xi^*_{j}+(1-\rho_t)\xi_{\rm unif}$, i.e., $M_{\Set_t}\neq M_j$ at stage $t$.

\begin{lemma}\label{lem:risk1-remark3}
 Under Condition~\ref{ass:stability1}, we have
	\begin{equation}
		\risk_1(T) :=  \sum_{t=1}^T \E (\mathbb{I}(\mA(t)\cap\mB(t))) 
  \le \km \Big (1+\frac{2}{\alpha^2}\Big ).
	\end{equation}
\end{lemma}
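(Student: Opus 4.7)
The plan is to parallel the proof of Lemma~\ref{lem:risk1}, replacing the deterministic counter $t-1$ by the random checking counter $\npull^{(j)}(t)$ and swapping classical Hoeffding for a martingale version. Start with the same decomposition
\begin{equation*}
\risk_1(T) \le \E\Big[\sum_{t=1}^T \mathbb{I}\big(M_\true \text{ checked at }t,\ \hat\theta_\true(t)\le c+\tfrac{\alpha}{2}\big)\Big] + \sum_{j\neq \true}\E\Big[\sum_{t=1}^T \mathbb{I}\big(M_j \text{ checked at }t,\ \hat\theta_j(t)\ge c-\tfrac{\alpha}{2}\big)\Big],
\end{equation*}
so it suffices to bound each of the $\km$ summands by $1+2/\alpha^2$.

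For the true-model term, enumerate the stages at which $M_\true$ is checked as $K_1<K_2<\cdots$, setting $K_k=\infty$ if fewer than $k$ checks occur by time $T$. Between two consecutive checks the pair $(a_\true,b_\true)$ is frozen and the indicator $\mathbb{I}(M_\true\text{ checked at }t)$ vanishes, so the inner sum collapses to $\sum_{k\ge 1}\mathbb{I}\{K_k<\infty,\ \hat\theta_\true(K_k)\le c+\alpha/2\}$, where $\hat\theta_\true(K_k)=(k-1)^{-1}\sum_{l=1}^{k-1}\zeta_\true(K_l)$ for $k\ge 2$. Let $\mathcal{F}_l$ denote the history up to and including stage $K_l$. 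Since $K_l<\infty$ forces $M_\true$ to be estimable at $K_l$, the first branch of Condition~\ref{ass:stability1} applies and gives $\E[\zeta_\true(K_l)\mid \mathcal{F}_{l-1}]=\theta_\true(\xi_{(K_l)}^{\rm hyb})>c+\alpha$. The centred variables $\zeta_\true(K_l)-\E[\zeta_\true(K_l)\mid\mathcal{F}_{l-1}]$ thus form a martingale difference sequence bounded by $1$, and Azuma--Hoeffding on $k-1$ terms yields $\pr(\hat\theta_\true(K_k)\le c+\alpha/2,\ K_k<\infty)\le \exp(-(k-1)\alpha^2/2)$ for $k\ge 2$. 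Combined with the trivial bound $1$ from $k=1$ and the geometric tail $\sum_{m\ge 1}e^{-m\alpha^2/2}\le 2/\alpha^2$, this yields the required $1+2/\alpha^2$.

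The $\km-1$ remaining summands are handled symmetrically: Condition~\ref{ass:stability1} in \emph{either} of its two branches guarantees $\theta_j(\xi_{(K_l)}^{\rm hyb})<c-\alpha$ whenever an estimable $M_j$ with $j\ne\true$ is checked, regardless of whether the true model itself is estimable at that stage. The identical martingale--Hoeffding estimate in the opposite direction then bounds each of these terms by $1+2/\alpha^2$ (in fact by $2/\alpha^2$, since $\hat\theta_j(K_1)=0<c-\alpha/2$ kills the $k=1$ contribution), and summing gives the claim.

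The main obstacle is the random, history-dependent schedule at which each model is checked: because the update epochs are stopping times coupled to the past designs and model-selection outcomes, there is no longer a clean sum of independent indicators to which classical Hoeffding applies. The key step that removes this difficulty is to re-index the updates by check-count rather than by stage-count and verify that the one-sided conditional-mean inequality in Condition~\ref{ass:stability1} survives in both regimes of the condition; this is precisely what makes the martingale difference bound hold pointwise and recovers the same $\exp(-k\alpha^2/2)$ tail as in Lemma~\ref{lem:risk1}.
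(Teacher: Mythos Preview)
Your proposal is correct and follows essentially the same route as the paper: split $\mB(t)$ into $\km$ one-model pieces, re-index each sum by the (random) check count for that model, and apply a Hoeffding-type tail bound to the running average of the $\{0,1\}$ scores, summing the resulting geometric series. The paper phrases the concentration step by conditioning on the realized check times $\iota_1<\cdots<\iota_w$ before invoking Hoeffding, whereas you appeal to Azuma--Hoeffding directly; one small slip is that the equality $\E[\zeta_\true(K_l)\mid\mathcal{F}_{l-1}]=\theta_\true(\xi_{(K_l)}^{\rm hyb})$ cannot hold literally (the design at stage $K_l$ is not $\mathcal{F}_{l-1}$-measurable), but the inequality $>c+\alpha$ you actually need follows by the tower property and the uniformity in Condition~\ref{ass:stability1}.
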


\begin{proof}[Proof of Lemma~\ref{lem:risk1-remark3}]
Note  that 
\begin{align*}
  \risk_1(T) % :=&  \sum_{t=1}^T \E (\mathbb{I}(\mA(t)\cap\mB(t)))\\
  \le & ~\E\Big  [ \sum_{t=1}^T \mathbb{I}\Big (M_\true \text { can be checked under } \xi_{(t)}^{\rm hyb},~\hat{\theta}_{\true}(t) \le  c  + \frac{\alpha}{2} \Big )\Big  ]\\
  &~ + \sum_{M_j\neq M_{\true}}\E\Big  [ \sum_{t=1}^T \mathbb{I}\Big (M_j \text { can be checked under } \xi_{(t)}^{\rm hyb},~\hat{\theta}_{j}(t) \ge  c  - \frac{\alpha}{2} \Big )\Big  ].
\end{align*}
As in  the proof of Lemma~\ref{lem:risk1}, we show 
for the first term
	\begin{equation} \label{det24}
		\E\Big  [ \sum_{t=1}^T \mathbb{I}\left(M_\true \text { can be checked under } \xi_{(t)}^{\rm hyb},~\hat{\theta}_{\true}(t) \le  c  + \frac{\alpha}{2} \right)\Big  ]  \leq 1 + \frac{2}{\alpha^2}
	\end{equation}
 and a similar 
	 inequality for the second term proves the result.  For this purpose  we introduce the notation 
	$$
	\bar\theta_j(t)=\frac{1}{\npull^{(j)}(t)}\sum_{k=1}^t\mathbb{I}(M_j~ \text{is checked at stage}~k)\theta_j(\xi_{(k)}^{\rm hyb}) 
	$$
 and a decomposition of  the time interval $(0,T] = (\iota_0,\iota_1] \cup (\iota_1,\iota_2] \cup \ldots $, where $0= \iota_0 < \iota_1 <  \iota_2 < \ldots$  are  the  times where  model $M_\true $ has been checked. Note that in   each time interval 
 $(\iota_\ell,\iota_{\ell +1}]$ the model  $M_\true $ can only be checked 
 once. 
 % Therefore, we have
 % \begin{align*}
 %     &\E\left\{\sum_{t=\iota_{w-1}+1}^{\iota_{w} }\mathbb{I}(M_\true \text { can be checked under } \xi_t^{\rm hyb},~\hat{\theta}_\true (t) <  c +\alpha/2)\right\}\\
 %     =&\pr\left(M_\true \text { has been checked}~w~\text{times in}~(0,\iota_{w}],~\hat{\theta}_\true (\iota_w) <  c +\alpha/2\right)\\
 %     \le&\pr\left(M_\true \text { has been checked}~w~\text{times in}~(0,\iota_{w}],~\hat{\theta}_\true (\iota_w) - 	\bar\theta_{{\true}}(\iota_w)< -\alpha/2 \right),
 % \end{align*}
 % where the last inequality follows from $\bar\theta_{\true}(\iota_w)>c+\alpha$ as a consequence of Condition~\ref{ass:stability}
Recall that the  model $M_\true$ will be checked if it can be checked according to the modification of Algorithm \ref{alg:vanillathompson1} in  Remark  \ref{remark:testset1}.
For $0 < \iota_{w-1} < \iota_w \leq T$ we have
 \begin{align}
  \nonumber   &\E\Big [ \sum_{t=\iota_{w-1}+1}^{\iota_{w} }\mathbb{I}(M_\true \text { can be checked under } \xi_{(t)}^{\rm hyb},~\hat{\theta}_\true (t) <  c +\alpha/2)\Big ] \\
 \nonumber    & ~~= \sum_{\iota_1<\ldots<\iota_{w-1}}\pr\Big (M_\true \text { has been checked at}~\iota_1,\iota_2,\ldots,\iota_w,~\hat{\theta}_\true (\iota_w) <  c +\alpha/2\Big )\\
 \nonumber   & ~~ \le\sum_{\iota_1<\ldots<\iota_{w-1}}\pr\left(\hat{\theta}_\true (\iota_w) - 	\bar\theta_{{\true}}(\iota_w)< -\alpha/2|M_\true \text { has been checked at}~\iota_1,\iota_2,\ldots,\iota_w\right)\\
 \nonumber    &~~~~~~~\times\pr(M_\true \text { has been checked at}~\iota_1,\iota_2,\ldots,\iota_w)\\
 %\nonumber    & ~~ \le  \sum_{\iota_1<\ldots<\iota_{w-1}}\exp\left(-w\alpha^2/2\right)\pr(M_\true \text { has been checked at}~\iota_1,\iota_2,\ldots,\iota_w)\\
 \nonumber    & ~~ \le \exp\left(-w\alpha^2/2\right)\sum_{\iota_1<\ldots<\iota_{w-1}}\pr(M_\true \text { has been checked at}~\iota_1,\iota_2,\ldots,\iota_w)\\
% \le& \exp\left(-w\alpha^2/2\right).
     & ~~ \le  \exp\left(-w\alpha^2/2\right).\label{eq:lem4-1}
 \end{align}
 Here the second inequality is a consequence of Condition~\ref{ass:stability1}
 and  Hoeffding's inequality by observing that 
$$
\bar\theta_{{\true}}(\iota_w)=\E [ \hat\theta_{{\true}}(\iota_w)|M_\true \text { has been checked at}~\iota_1,\iota_2,\ldots,\iota_w ] .
$$
%Moreover, the last inequality follows  
As 
 $\sum_{\iota_1<\ldots<\iota_{w-1}}\pr(M_\true \text { has been checked at}$ $~\iota_1,\iota_2,\ldots,\iota_w)$ equals to  the marginal probability $\pr(M_\true \text { has been checked at}~\iota_w)$, which can not exceed one,
 the last inequality holds as well.

Let $n_\iota \leq T $ denote   the number of times that  model $M_\true$ has been checked, then  
conditioning on the event $\{n_\iota>0\}$ 
and  using \eqref{eq:lem4-1}
yields
	\begin{align*}
		&\E\Big [ \sum_{t=1}^T\mathbb{I}\left(M_\true \text { can be checked under } \xi_{(t)}^{\rm hyb},~
  \hat{\theta}_\true (t) <  c +\alpha/2\right)\Big ] \\
 &  ~~~~ \le 
\E\Big [\sum_{w=0}^{n_\iota-1}\E\Big [ \sum_{t=\iota_{w}+1}^{\iota_{w+1} }\mathbb{I}\left(M_\true \text { can be checked under } \xi_{(t)}^{\rm hyb},~\hat{\theta}_\true (t) <  c +\alpha/2\right)\Big{|}n_{\iota}\ge 1\Big ] \Big  ] \\
  &~~~~~~~~~+\pr(n_{\iota}=0)\\
%  &  ~~~~ 1+\sum_{w=1}^T\exp\left(-w\alpha^2/2\right)\\
		 &  ~~~~ \le 
     1 + \sum_{w = 1}^\infty \exp\left(-w\alpha^2/2\right)
   =  1 + \frac{\exp\left(- \alpha^2/2\right)}{1-\exp\left(- \alpha^2/2\right)}\leq 1+ \frac{2}{\alpha^2}~, 
	\end{align*}
which proves \eqref{det24}.
\end{proof}

\begin{lemma} \label{lem:4-remark3} 
	 Under Condition~\ref{ass:stability1} we have  
	$$
	\risk_2(T):=\sum_{t=1}^T\E\{\mathbb{I}(\mathcal{A}(t)\cap \mathcal{B}^c(t)\cap\mathcal{C}(t)\cap \mathcal D(t))\}\leq  \km.
	$$
\end{lemma}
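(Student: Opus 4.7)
The plan is to mirror the structure of the proof of Lemma~\ref{lem:4}, but to replace the common counter $t-1$ with the arm-specific counters $\npull^{(j)}(t)=a_j(t)+b_j(t)-2$ that are now required once we allow for stages at which certain candidate models cannot be checked. Throughout, I will work on the event $\mathcal{D}(t)$, where $\npull^{(j)}(t)>8\log(T)/\alpha^2$ for every $j\in\{1,\ldots,\km\}$, so that
\[
\frac{\alpha}{2}>\sqrt{\frac{2\log(T)}{\npull^{(j)}(t)}}\qquad\text{for every }j.
\]

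First, I would rewrite the event $\mathcal{C}(t)\cap\mathcal{D}(t)$ in arm-specific form: on $\mathcal{D}(t)$ it is contained in the union
\[
\bigcup_{M_j\neq M_\true}\Bigl\{\eta_j(t)-\hat\theta_j(t)\ge \sqrt{2\log(T)/\npull^{(j)}(t)}\Bigr\}\;\cup\;\Bigl\{\hat\theta_\true(t)-\eta_\true(t)\ge \sqrt{2\log(T)/\npull^{(\true)}(t)}\Bigr\}.
\]
Since $\hat\theta_j(t)=(a_j(t)-1)/(\npull^{(j)}(t))$ is exactly the mode of a $\mathrm{Beta}(a_j(t),b_j(t))$ distribution and $\eta_j(t)$ is drawn from that distribution, the deviation of $\eta_j(t)$ from $\hat\theta_j(t)$ is controlled by the arm-specific count $\npull^{(j)}(t)$, and Lemma~4 of \cite{wang2018th} is applicable in this regime.

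Next, for each fixed arm $j$, I would sum over $t$ the probability that $|\eta_j(t)-\hat\theta_j(t)|\ge\sqrt{2\log(T)/\npull^{(j)}(t)}$, grouping stages according to the value of $\npull^{(j)}(t)$. Because $\npull^{(j)}(t)$ only increases by one whenever model $M_j$ is actually checked, the contributions over consecutive stages with the same value of $\npull^{(j)}(t)$ are identical, and the standard Beta-concentration argument (exactly as in \cite{wang2018th}) bounds each per-arm total by $1$. Summing the $\km$ arm-wise bounds yields
\[
\sum_{t=1}^T\pr\bigl(\mathcal{A}(t)\cap\mathcal{B}^c(t)\cap\mathcal{C}(t)\cap\mathcal{D}(t)\bigr)\le\sum_{t=1}^T\pr\bigl(\mathcal{C}(t)\cap\mathcal{D}(t)\bigr)\le\km,
\]
which is the claimed inequality.

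The main obstacle I expect is handling the fact that $\npull^{(j)}(t)$ may stall while $t$ advances (because model $M_j$ need not be checkable at every stage); consequently the argument from Lemma~\ref{lem:4}, which relied on $\npull^{(j)}(t)=t-1$, no longer applies directly. The resolution is to re-index the sum by the value of $\npull^{(j)}(t)$ rather than by $t$: between two successive increments of $\npull^{(j)}(t)$ the pair $(a_j(t),b_j(t))$ is frozen, but the new sample $\eta_j(t)$ is drawn independently each time, and the threshold $\sqrt{2\log(T)/\npull^{(j)}(t)}$ is also unchanged. A routine union bound over these (at most $T$) repeated draws is absorbed into the $\log(T)/\alpha^2$ factor sitting inside the definition of $\mathcal{D}(t)$, which is exactly why the threshold $8\log(T)/\alpha^2$ appears there; this is the same mechanism as in Lemma~\ref{lem:4} and will produce the same final constant.
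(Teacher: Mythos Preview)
Your core argument matches the paper's proof: on $\mathcal{D}(t)$ one has $\alpha/2>\sqrt{2\log(T)/\npull^{(j)}(t)}$ for every $j$, so $\mathcal{C}(t)\cap\mathcal{D}(t)$ is contained in the union of the Beta-deviation events, and Lemma~4 of \cite{wang2018th} bounds $\sum_{t=1}^T\pr(\mathcal{B}^c(t)\cap\mathcal{C}(t)\cap\mathcal{D}(t))\le\km$. That is exactly what the paper does, and your first two paragraphs already complete the proof.

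Your final paragraph, however, manufactures an obstacle that is not there and then mis-explains its resolution. The stalling of $\npull^{(j)}(t)$ causes no difficulty: at every stage $t$ a \emph{fresh} sample $\eta_j(t)\sim\mathrm{Beta}(a_j(t),b_j(t))$ is drawn, and the Beta concentration bound gives a per-draw probability of order $1/T$ for the event $\{|\eta_j(t)-\hat\theta_j(t)|\ge\sqrt{2\log(T)/\npull^{(j)}(t)}\}$, \emph{irrespective} of the value of $\npull^{(j)}(t)$. Summing $\km\cdot(1/T)$ over $T$ stages gives $\km$ directly; no re-indexing by $\npull^{(j)}(t)$ is required. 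In particular, the threshold $8\log(T)/\alpha^2$ in $\mathcal{D}(t)$ is \emph{not} there to ``absorb a union bound over repeated draws'' as you suggest; its sole role is to guarantee $\alpha/2>\sqrt{2\log(T)/\npull^{(j)}(t)}$ so that the $\alpha/2$-threshold in $\mathcal{C}(t)$ can be replaced by the data-dependent threshold to which the Beta concentration lemma applies. Drop the last paragraph and the proof is clean.
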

\begin{proof}
	On the event ${\mathcal D}(t)$, we have 
	${\alpha}/{2}>\sqrt{{2\log(T)}/{\npull^{(j)}(t)}}$ for all $j=1,\ldots,\km$. The event $\mathcal{B}^c(t)\cap \mathcal{C}(t)\cap {\mathcal D}(t)$ is a subset of
	$$
	\Big \{ \exists M_j \neq M_\true : \eta_j(t) - \hat{\theta}_j(t) > \sqrt{\frac{2\log(T)}{\npull^{(j)}(t)}} \text{ or }  \hat{\theta}_\true(t)- \eta_\true(t) > \sqrt{\frac{2\log(T)}{\npull^{(j)}(t)}} \Big \}, 
	$$ 
 with $\npull^{(i)}(t)\ge 1$.
	Observing that $\npull^{(j)}(t)=a_j(t)+b_j(t)-2$. 
	and using  Lemma 4 in \cite{wang2018th} we obtain 
	\begin{align*}
		\sum_{t=1}^T \pr\left(\mathcal{A}(t) \cap \mathcal{B}^c(t)\cap \mathcal{C}(t)\cap {\mathcal{D}}(t)\right)&\leq \sum_{t=1}^T \pr\left( \mathcal{B}^c(t)\cap \mathcal{C}(t)\cap {\mathcal{D}}(t)\right) \leq  \km ,
	\end{align*}
which  is the desired result.

% \HD{\bf I do not think that we need this calculation?} \YJR{we do not need this. This is also the part I use to prove the concentration inequality in \cite{wang2018th}}
% \YJR{
% Simple calculation yields that
% $$
% \begin{aligned}
% & \pr\left(\eta_i(t)-\hat{\theta}_i(t)>\sqrt{\frac{2 \log T}{\npull^{(i)}(t)}}, \npull^{(i)}(t)\ge 1\right) \\
% = & \sum_{w=1} \pr\left(\eta_i(t)-\hat{\theta}_i(t)>\sqrt{\frac{2 \log T}{\npull^{(i)}(t)}}, \npull^{(i)}(t)=w\right) \\
% = & \sum_{w=1} \pr\left(\npull^{(i)}(t)=w\right) \pr\left(\eta_i(t)-\hat{\theta}_i(t)>\sqrt{\frac{2 \log T}{\npull^{(i)}(t)}} \bigg{|} \npull^{(i)}(t)=w\right) \\
% \leq  & \sum_{w=1} \pr\left(\npull^{(i)}(t)=w\right) \exp \left(-2^{-1} \npull^{(i)}(t)\left(\sqrt{\frac{2 \log T}{\npull^{(i)}(t)}}\right)^2\right) \\
% = & \sum_{w=1} \pr\left(\npull^{(i)}(t)=w\right) \exp (- \log T) \\
% = & \exp (- \log T) = \frac{1}{T}.
% \end{aligned}
% $$
% }  
\end{proof}

\begin{lemma}\label{lem:times1}
Assume that  Condition~\ref{ass:stability1}
holds  and  that Algorithm~\ref{alg:vanillathompson1} 
is  modified  according to Remark~\ref{remark:testset1}, then
 \begin{equation}%\label{eq:16}
		\E \Big [ \sum_{t=1}^{T} \mathbb{I}\Big  (\mathcal{A}(t)\Big ) \Big ]  
  \le \km\left(3+\frac{4}{\alpha^2}+\frac{8\log(T)}{\alpha^2}\right).
	\end{equation}
\end{lemma}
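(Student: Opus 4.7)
The plan is to mirror the four-term decomposition used in the proof of Lemma~\ref{lem:times}, now with the redefined events $\mathcal{B}(t)$, $\mathcal{C}(t)$, and $\mathcal{D}(t)$ appropriate to the modified algorithm. Concretely, I would write
\begin{align*}
\mathbb{I}(\mathcal{A}(t))={}& \mathbb{I}(\mathcal{A}(t)\cap\mathcal{B}(t)) + \mathbb{I}(\mathcal{A}(t)\cap\mathcal{B}^c(t)\cap\mathcal{C}(t)\cap\mathcal{D}(t))\\
&+\mathbb{I}(\mathcal{A}(t)\cap\mathcal{B}^c(t)\cap\mathcal{C}(t)\cap\mathcal{D}^c(t)) + \mathbb{I}(\mathcal{A}(t)\cap\mathcal{B}^c(t)\cap\mathcal{C}^c(t))
\end{align*}
and bound each summand in turn. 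Lemmas~\ref{lem:risk1-remark3} and \ref{lem:4-remark3} immediately furnish the bounds $\km(1+2/\alpha^2)$ and $\km$ for the first two summands.

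For the third summand I would apply the union bound $\mathbb{I}(\mathcal{D}^c(t))\le \sum_{j=1}^{\km} \mathbb{I}(\npull^{(j)}(t)\le 8\log(T)/\alpha^2)$ and argue, for each fixed $j$, that the number of stages in which $M_j$ remains under-checked is of order $8\log(T)/\alpha^2$. Since $\npull^{(j)}(t)$ is non-decreasing in $t$ and is incremented whenever $M_j$ is estimable under $\xi_{(t)}^{\rm hyb}$, and since the $\mathrm{Beta}(1,1)$ initialization ensures that an under-checked arm retains non-trivial exploration mass and so gets selected (and thereby made estimable as the center of the hybrid design), the total contribution across $j$ is at most $\km\cdot 8\log(T)/\alpha^2$.

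The fourth summand, which vanished in the proof of Lemma~\ref{lem:times}, is no longer identically zero, but I would show it forces $M_\true$ to be non-estimable at stage $t$. Indeed, on this event $\psi_t=j^\star\neq\true$ with $\eta_{j^\star}(t)$ maximal; because the hybrid design centered at $\xi^*_{j^\star}$ always makes $M_{j^\star}$ estimable, the clause of $\mathcal{B}^c(t)$ concerning $M_j\neq M_\true$ gives $\hat{\theta}_{j^\star}(t)<c-\alpha/2$, and then $\mathcal{C}^c(t)$ yields $\eta_{j^\star}(t)<c$, so $\eta_\true(t)<c$ and hence $\hat{\theta}_\true(t)<c+\alpha/2$; the clause of $\mathcal{B}^c(t)$ concerning $M_\true$ then forces $M_\true$ not to be checked at stage $t$. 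On such stages the ``otherwise'' clause of Condition~\ref{ass:stability1} produces $\theta_j(\xi_{(t)}^{\rm hyb})<c-\alpha$ for every estimable $M_j$, and a Hoeffding-type concentration argument parallel to the proof of Lemma~\ref{lem:risk1-remark3} bounds this summand by $\km(1+2/\alpha^2)$. Adding the four contributions produces exactly $\km(3+4/\alpha^2+8\log(T)/\alpha^2)$.

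The main obstacle is the third summand: obtaining a clean deterministic bound on $\sum_{t=1}^T\mathbb{I}(\npull^{(j)}(t)\le 8\log(T)/\alpha^2)$ requires controlling the random growth rate of $\npull^{(j)}(t)$ under Thompson sampling, taking into account the coupling between the random choice $\psi_t$ and the estimability of $M_j$ under the resulting hybrid design. The $\mathrm{Beta}(1,1)$ prior together with the stability guaranteed by Condition~\ref{ass:stability1} and the template supplied by Lemma~\ref{lem:4-remark3} are the tools I would use to close the argument.
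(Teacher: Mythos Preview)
Your decomposition and the handling of the first two summands via Lemmas~\ref{lem:risk1-remark3} and~\ref{lem:4-remark3} coincide with the paper's proof. The divergences are in the remaining two pieces.

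For the fourth summand the paper simply asserts $\mathbb{I}(\mathcal{A}(t)\cap\mathcal{B}^c(t)\cap\mathcal{C}^c(t))=0$, whereas you argue---correctly, under the redefined $\mathcal{B}(t)$---that this event can survive precisely when $M_\true$ is not estimable under $\xi_{(t)}^{\rm hyb}$; your chain of implications leading to that conclusion is sound, and in this respect your analysis is more careful than the paper's. However, your proposed bound of $\km(1+2/\alpha^2)$ via ``a Hoeffding-type concentration argument parallel to Lemma~\ref{lem:risk1-remark3}'' does not work as stated: the ``otherwise'' clause of Condition~\ref{ass:stability1} gives $\theta_j(\xi_{(t)}^{\rm hyb})<c-\alpha$ for each estimable $M_j$, but on $\mathcal{B}^c(t)$ you already have $\hat\theta_j(t)<c-\alpha/2$ for those $j$, which is the \emph{typical} event, not a rare one---concentration buys you nothing here. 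A workable repair is to split this summand again by $\mathcal{D}(t)/\mathcal{D}^c(t)$: on $\mathcal{D}^c(t)$ the counting argument for the third summand absorbs it, while on $\mathcal{D}(t)$ you have $\npull^{(\true)}(t)>8\log(T)/\alpha^2$ together with the derived constraint $\hat\theta_\true(t)<c+\alpha/2$, and Hoeffding applied to the \emph{past} checks of $M_\true$ (where Condition~\ref{ass:stability1} does give $\theta_\true>c+\alpha$) yields a per-stage probability below $T^{-4}$, hence a negligible total.

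For the third summand, the paper does not union-bound $\mathbb{I}(\mathcal{D}^c(t))$ over $\{\npull^{(j)}(t)\le 8\log(T)/\alpha^2\}$. Instead it keeps the intersection with $\mathcal{A}(t)$, uses $N_j(t)\le\npull^{(j)}(t)$ to pass to $\{\exists j:N_j(t)\le 8\log(T)/\alpha^2\}$, and then counts, for each $j\neq\true$, the stages with $\tau_t=j$ and $N_j(t)\le 8\log(T)/\alpha^2$---at most $8\log(T)/\alpha^2$ many, since $N_j$ increments at every such stage. Your route of bounding $\sum_t\mathbb{I}(\npull^{(j)}(t)\le C)$ directly is genuinely harder (as you correctly flag) because $\npull^{(j)}(t)$ need not increment unless $M_j$ happens to be estimable; the paper's switch to $N_j(t)$ together with the restriction $\tau_t=j$ is precisely the device that makes this step elementary.
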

\begin{proof}[Proof of Lemma~\ref{lem:times1}]
		We use the decomposition 
	\begin{align*}
	\nonumber	\mathbb{I}\left(\mathcal{A}(t)\right) & =   \mathbb{I}\left(\mathcal{A}(t) \cap\mathcal{B}(t)\right) +  \mathbb{I}\left(\mathcal{A}(t) \cap\mathcal{B}^c(t)\cap \mathcal{C}(t) \right) + \mathbb{I}\left(\mathcal{A}(t) \cap\mathcal{B}^c(t)\cap \mathcal{C}^c(t)\right) %\label{eq:event-decomp}
	\end{align*}
 and consider the  three terms separately, where the 
	 first term has already been considered in the proof of Lemma~\ref{lem:risk1-remark3} and the last term is zero.
	
 Thus we focus on the second term, which is  further decomposed 
	\begin{align}
 \mathbb{I}\left(\mathcal{A}(t) \cap\mathcal{B}^c(t)\cap \mathcal{C}(t) \right) &=	
		  \mathbb{I}\left(\mathcal{A}(t) \cap\mathcal{B}^c(t)\cap \mathcal{C}(t) \cap  {\mathcal{D}}(t)  \right) \nonumber\\
		  &+ \mathbb{I}\left(\mathcal{A}(t) \cap\mathcal{B}^c(t)\cap \mathcal{C}(t)\cap   {\mathcal{D}}^c(t)  \right).\label{eq:a21}
	\end{align}
According to Lemma~\ref{lem:4-remark3} the first term is bounded by $\km$, and it remains to 
consider  the second term of \eqref{eq:a21}. 
Note that  $\mathcal{A}(t) \cap\mathcal{B}^c(t)\cap \mathcal{C}(t) \cap  \mathcal{D}^c(t) \subseteq \mathcal{A}(t) \cap\mathcal{D}^c(t)$. 
As only one arm is selected at  time $t$ and $M_j$ can be checked when $\xi^*_j$ is chosen to construct the hybrid design, the 
number of times where $\xi^*_j$ is chosen to construct the hybrid design either exceeds  $8\log(T)/\alpha^2$ times or not.
Observing that  that $N_j(t)\le\npull^{(j)}(t)$, it follows 
%the event 
$\mathcal{D}^c(t) \subset \{ \exists ~j \text{ such that  } N_j(t)\le 8\log(T)/\alpha^2 \} $.
Thus, for any $M_j\neq M_\true$, the total risk caused by the selection 
is bounded $8\log(T)/\alpha^2$ if  $N_j(t)\le 8\log(T)/\alpha^2$.
Otherwise, if  $N_{j}(t)> 8\log(T)/\alpha^2$,  the  same arguments as given in  the proof of Lemma~\ref{lem:4}
show that the accumulated risk from $t$ to $T$ 
is bounded by $1$.
Therefore,  we have
	\begin{align*}
	%&\E\left(\mathbb{I}\left(\mathcal{A}(t) \cap\mathcal{B}^c(t)\cap \mathcal{C}(t) \right)\right)\\ =&	
%	\E\left(\mathbb{I}\left(\mathcal{A}(t) \cap\mathcal{B}^c(t)\cap \mathcal{C}(t) \cap  \mathcal{D}(t)  \right)\right) +
 \E\left[\mathbb{I}\left(\mathcal{A}(t) \cap\mathcal{B}^c(t)\cap \mathcal{C}(t)\cap  \mathcal{D}^c(t)  \right)\right]\leq 
	& % \km+
 (\km-1)+8(\km-1)\log(T)/\alpha^2 , 
\end{align*}
and the desired result follows. 
\end{proof}

\paragraph{Proof of Theorem~\ref{thm:risk21}.}
The proof now follows by the same arguments as given in the proof of 
Theorem~\ref{thm:risk22}, where 
Lemma \ref{lem:times} is replaced  by Lemma~\ref{lem:times1}.

\section{The designs used in Section~\ref{ex:3}}
\def\theequation{D.\arabic{equation}}
	\setcounter{equation}{0}
 
\label{seca5}
In this subsection, we will give the explicit form of the optimal designs and designs we used for comparison.

%{\bf Section~\ref{ex:3}}

Uniform design:
$$
\xi=\begin{Bmatrix}
 0 & 0.25 &0.5 & 0.75&1 \\
  1/5& 1/5 &1/5&1/5&1/5
\end{Bmatrix}.
$$

Standard design:
$$
\xi=\begin{Bmatrix}
 0 & 0.05 &0.2 & 0.6&1 \\
  1/5& 1/5 &1/5&1/5&1/5
\end{Bmatrix}.
$$

Hybrid design:
$$
\xi=\begin{Bmatrix}
 0 & x_{emax}^* &x_{exp}^* &1 \\
  7/18& 2/18 &2/18&7/18
\end{Bmatrix},
$$
where $x_{emax}^*=0.14285$ for both $\delta=3$ and $5$, and $x_{exp}^*=0.67682,0.70560$ %,0.73100$ 
for $\delta=3$ and $5$, respectively.

Robust design with $\delta=3$ and $5$:
$$
\xi_{\delta=3}=\begin{Bmatrix}
 0 & 0.131 &0.678 &1 \\
  14/46& 9/46 &9/46&14/46
\end{Bmatrix},
$$

$$
\xi_{\delta=5}=\begin{Bmatrix}
 0 & 0.129 &0.726 &1 \\
  14/46& 9/46 &9/46&14/46
\end{Bmatrix},
$$

\end{document}